\renewcommand\epsilon\varespilon 
\renewcommand\phi\varphi 
\renewcommand \subset \subseteq
\newcommand\AND{\quad\textrm{and}\quad}
\newcommand\be{\mathbf{e}}
\newcommand\bx{\mathbf{x}}  
\newcommand\bX{\mathbf{X}}
\newcommand\bY{\mathbf{Y}}
\newcommand\bZZ{\mathbf{Z}}
\newcommand\by{\mathbf{y}}  
\newcommand\bz{\mathbf{z}}  
\newcommand\bQ{\mathbb{Q}} 
\newcommand\bN{\mathbb{N}} 
\newcommand\bR{\mathbb{R}} 
\newcommand\bZ{\mathbb{Z}} 
\newcommand\codim{\mathrm{codim}}
\newcommand\cA{\mathcal{A}}
\newcommand\cB{\mathcal{B}}
\newcommand\cD{\mathfrak{D}}
\newcommand\cI{\mathcal{I}}
\newcommand\cU{\mathcal{U}}
\newcommand\cV{\mathcal{V}}
\newcommand\ee{\varepsilon}
\newcommand\GL{\mathrm{GL}}
\newcommand\GrO{\mathcal{O}} 
\newcommand\hlambda{\widehat{\lambda}} 
\newcommand\homega{\widehat{\omega}} 
\newcommand\tP{\widetilde{P}}
\newcommand\tQ{\widetilde{Q}}
\newcommand\tR{\widetilde{R}}
\newcommand\tS{\widetilde{S}}
\newcommand\HH{\mathcal{H}} 
\newcommand\HHH{\mathcal{D}_\xi'}
\newcommand\HHstar{\mathcal{D}_{\xi}}
\newcommand\ie{\textsl{i.e. }} 
\newcommand\eg{\textsl{e.g. }} 
\newcommand{\norm}[1]{\left\|#1 \right\|_2} 
\newcommand{\normH}[1]{\|#1\|} 
\newcommand{\psc}[2]{\left(#1 \mid #2\right)} 
\newcommand\la{\langle}
\newcommand\ra{\rangle}
\newcommand{\Vect}[2][]{\left\la #2\right\ra_{#1}} 
\newcommand\Res{\mathrm{Res}}
\theoremstyle{definition} 
\newtheorem{Def}{Definition}[section]
\theoremstyle{plain} 
\newtheorem{Prop}[Def]{Proposition} 
\newtheorem{Lem}[Def]{Lemma} 
\newtheorem{Thm}{Theorem}[section] 
\newtheorem{Cor}[Def]{Corollary} 
\theoremstyle{remark} 
\newtheorem{Rem}[Def]{Remark} 
\numberwithin{equation}{section} 
\title{On uniform polynomial approximation}  
\author{Anthony Poëls} 
\date{}
\newcommand{\Addresses}{{
  \bigskip
  \footnotesize
  \noindent\textsc{Universite Claude Bernard Lyon 1, \\
  Institut Camille Jordan UMR 5208, \\
  69622 Villeurbanne,\\
  France}
  \medskip
  \par\nopagebreak
  \noindent\textit{E-mail address}: \texttt{poels@math.univ-lyon1.fr}
}}
\newcommand{\MSC}{{
  \footnotesize
  \textbf{MSC~2020}: 11J13(Primary), 11J82 (Secondary).
}}
\newcommand{\keysW}{{
  \footnotesize
  \textbf{Keywords}: Exponent of Diophantine approximation, heights, uniform polynomial approximation.
}}
\newcommand{\Ack}{{
  \footnotesize

  \textbf{Acknowledgements}: I would like to thank Damien Roy for his attentive reading of this work and his many comments, which helped to improve its overall presentation and clarity.
}}
\begin{document} 

\baselineskip=17pt 

\maketitle

\begin{abstract}
    Let $n$ be a positive integer and $\xi$ a transcendental real number. We are interested in bounding from above the uniform exponent of polynomial approximation $\homega_n(\xi)$. Davenport and Schmidt's original 1969 inequality $\homega_n(\xi)\leq 2n-1$ was improved recently, and the best upper bound known to date is $2n-2$ for each $n\geq 10$. In this paper, we develop new techniques leading us to the improved upper bound $2n-\frac{1}{3}n^{1/3}+\GrO(1)$.
\end{abstract}

\MSC

\keysW

\section{Introduction}

Let $\xi$ be a non-zero real number and let $n$ be a positive integer. Dirichlet's theorem (1842) is one of the most basic results of Diophantine approximation. It shows that for any real number $H > 1$, there exists a non-zero integer point $(x_0,\dots,x_n)\in\bZ^{n+1}$ such that
\begin{align}
    \label{eq: Dirichlet system}
    \max\big\{|x_1|,\dots,|x_n|\big\} \leq H \AND |x_0 + x_1\xi + \cdots + x_n\xi^n| \leq H^{-n}.
\end{align}
It is natural to ask if we can improve the exponent $n$ of $H^{-n}$, and this question gives rise to two Diophantine exponents. The so-called \textsl{uniform exponent of approximation} $\homega_n(\xi)$ (resp. the \textsl{ordinary} exponent $\omega_n(\xi)$), is the supremum of the real numbers $\omega > 0$ such that the system
\begin{align*}
    \normH{P} \leq H \AND 0 <|P(\xi)| \leq H^{-\omega}
\end{align*}
admits a non-zero solution $P\in\bZ[X]$ of degree at most $n$ for each sufficiently large $H$ (resp. for arbitrarily large $H$). Here, $\normH{P}$ denotes the (naive) \textsl{height} of $P$, defined as the largest absolute value of its coefficients. These quantities have been extensively studied over the past half-century, see for example \cite{bugeaud2015exponents} for a nice overview of the subject. By Dirichlet's theorem, if $\xi$ is not an algebraic number of degree $\leq n$, then we have
\[
    \omega_n(\xi) \geq \homega_n(\xi) \geq n,
\]
and it is well known that those inequalities are equalities for almost all real numbers $\xi$ (w.r.t. Lebesgue measure). Note that if  $\xi$ is an algebraic number of degree $d$, then $\homega_n(\xi)$ and $\omega_n(\xi)$ are both equal to $\min\{n,d-1\}$ (it is a consequence of Schmidt's subspace theorem, see \cite[Theorem 2.10]{bugeaud2015exponents}). We can therefore restrict our study to the set of transcendental real numbers. The initial question ``can we improve the exponent $n$ in Dirichlet's Theorem?'' may be rephrased as follows: ``does there exist a transcendental real number $\xi$ satisfying $\homega_n(\xi) > n$?''. For $n=1$ the answer is negative and rather elementary to prove, so the first non-trivial case is $n=2$. Before the early $2000$s, it was conjectured that no such number existed. This belief was swept away by Roy's extremal numbers \cite{roy2003approximation}, \cite{roy2004approximation}, \cite{arbourRoyCriterionDegreTwo}, whose exponent $\homega_2$ is equal to the maximal possible value $(3+\sqrt 5)/2 = 2.618\cdots$. Since then, several families of transcendental real numbers whose uniform exponent $\homega_2$ is greater than $2$ have been discovered (see for example \cite{roy2007two}, \cite{bugeaud2005exponentsSturmian}, \cite{poels2017exponents, poelsExpoGeneralClass2021}). However, for $n \geq 3$ the mystery remains, and it is still an open question wether or not there exists $\xi\in\bR\setminus\overline{\bQ}$ with $\homega_n(\xi) > n$.

\medskip

In this paper, we are interested in finding an upper bound for the uniform exponent $\homega_n(\xi)$, as this could provide clues to solving the initial problem. Brownawell's version of Gel'fond's criterion \cite{brownawell1974sequences} implies that $\homega_n(\xi) \leq 3n$. In 1969, Davenport and Schmidt \cite[Theorem 2b]{davenport1969approximation} showed that for any transcendental real number $\xi$ and any integer $n\geq 2$, we have
\begin{equation}
    \label{eq: estimation Davenport and Schmidt}
    \homega_n(\xi)\leq 2n-1.
\end{equation}
Up to now, few improvements have been made. Bugeaud and Schleischitz \cite[Theorem 2.1]{bugeaudSchleischitz2016Uniform} first got the upper bound
\begin{align}
    \label{eq: intro Bugeaud-Schleischitz}
    \homega_n(\xi) \leq n-\frac{1}{2} + \sqrt{n^2-2n+1/4} = 2n-\frac{3}{2}+ \ee_n,
\end{align}
where $\ee_n>0$ tends to $0$ as $n$ tends to infinity. Recently, Marnat and Moshchevitin \cite{marnat2018optimal} proved an important conjecture of Schmidt and Summerer on the ratio $\homega_n(\xi)/\homega_n(\xi)$ (also see \cite[Chapter 2]{PhDMartin2019} for an alternative proof based on parametric geometry of numbers). In \cite{schleischitz2017some}, Schleischitz pointed out that we can use the aforementioned inequality in the proof of \eqref{eq: intro Bugeaud-Schleischitz} to get
\[
    \homega_n(\xi) \leq 2n-2,
\]
for each $n\geq 10$. This is currently the best known upper bound. Let us also mention that using parametric geometry of numbers, Schleischitz \cite[Theorem 1.1]{schleischitz2017uniformPoly} was able to replace the estimate \eqref{eq: intro Bugeaud-Schleischitz} by
\begin{align*}
    \homega_n(\xi) \leq \frac{3(n-1)+\sqrt{n^2-2n+5}}{2} = 2n-2+ \ee_n'
\end{align*}
where $\ee_n' > 0$ tends to $0$ as $n$ tends to infinity. For $n=3,\dots,9$, bounds that are better than \eqref{eq: estimation Davenport and Schmidt}, but (strictly) greater than $2n-2$, are known. For example, for each transcendental real number $\xi$, we have
\[
    \homega_3(\xi)\leq 3+\sqrt 2 = 4.41\cdots
\]
see \cite{bugeaudSchleischitz2016Uniform}. In this paper, without relying on Marnat-Moshchevitin's inequality, we show in Section~\ref{Section: cas d = 2} that the upper bounds $\homega_n(\xi)\leq 2n-2$ holds for any $n\geq 4$. We also improve the upper bound for~$\homega_3$.

\begin{Thm}
    \label{Thm : main d=2}
   Let $n\geq 3$ be an integer and $\xi\in\bR$ be a transcendental real number. If $n\geq 4$, then
    \begin{align*}
        \homega_n(\xi) \leq 2n-2.
    \end{align*}
    For $n=3$, we  have the weaker estimate $\homega_3(\xi)\leq 2+\sqrt 5 = 4.23\cdots$.
\end{Thm}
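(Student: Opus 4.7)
The plan is to argue by contradiction. Assume $\homega_n(\xi) > \omega$ for some real $\omega$ strictly greater than the target bound (so $\omega > 2n-2$ if $n\geq 4$, resp.\ $\omega > 2+\sqrt 5$ if $n=3$). Then for every sufficiently large $H$ there exists a nonzero polynomial $P\in\bZ[X]$ of degree $\leq n$ with $\normH{P} \leq H$ and $|P(\xi)|\leq H^{-\omega}$. A standard extraction yields a sequence of ``best approximation'' polynomials $(P_i)_{i\geq 1}$ in $\bZ[X]$, with degrees $\leq n$ and heights $X_i:=\normH{P_i}$ strictly increasing to infinity, such that $|P_i(\xi)| \leq X_{i+1}^{-\omega}$ and $|P_i(\xi)|\leq |Q(\xi)|$ for every nonzero $Q\in\bZ[X]$ of degree $\leq n$ with $\normH{Q}<X_{i+1}$.

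The heart of the argument is a careful analysis of pairs of consecutive best approximants $(P_i,P_{i+1})$---the ``case $d=2$'' to which the title of the section refers, in contrast to the triple-based approach of Bugeaud--Schleischitz responsible for the prior bound $2n-\tfrac{3}{2}+\ee_n$. Any two consecutive $P_i, P_{i+1}$ are necessarily linearly independent over $\bQ$, and we write $P_i = D_i\tP_i$, $P_{i+1} = D_i\tQ_i$ with $D_i := \gcd(P_i,P_{i+1})\in\bZ[X]$ and $\gcd(\tP_i,\tQ_i)=1$. Since $\Res(\tP_i,\tQ_i)\in\bZ\setminus\{0\}$ we have $|\Res(\tP_i,\tQ_i)|\geq 1$; comparing this with a Gel'fond--Feldman type upper bound on the resultant in terms of $\normH{\tP_i}$, $\normH{\tQ_i}$, $|\tP_i(\xi)|$ and $|\tQ_i(\xi)|$ produces a recurrence relating $X_i$, $X_{i+1}$, and the degree and height of $D_i$, valid at every index $i$.

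Iterating this recurrence along a long chain of indices, and incorporating the minimality property of the $P_i$, yields a quadratic constraint on $\omega$. For $n=3$ this reduces essentially to $\omega^2-4\omega-1\leq 0$, whose positive root is $2+\sqrt 5$; for $n\geq 4$ the additional flexibility afforded by higher-degree polynomials lets one push the same analysis further and obtain $\omega\leq 2n-2$. A key feature of this purely pairwise approach is that it never invokes the Marnat--Moshchevitin ratio inequality between $\omega_n$ and $\homega_n$, which in Schleischitz's earlier proof forced the restriction $n\geq 10$ in the bound $2n-2$.

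The main obstacle I anticipate is the case in which $D_i$ has significant degree or height, i.e.\ when consecutive best approximants share a large algebraic common factor. In that regime the coprime quotient pair $(\tP_i,\tQ_i)$ lives in a space too constrained for the pairwise resultant estimate alone to be decisive. The remedy should be to observe that in this case $\xi$ is very well approximated by a root of $D_i$, an algebraic number of degree strictly less than $n$, so that one may descend onto a ``lower-degree'' best approximation sequence and iterate the argument there. Making this descent uniform in $i$, and in particular ensuring that the constants do not accumulate into something too weak in the chain, is where I expect the bulk of the technical work of the proof to concentrate.
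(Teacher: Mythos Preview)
Your proposal contains a genuine misunderstanding of what ``$d=2$'' means here, and this leads to a real gap. In the paper, $d$ is the parameter in the working hypothesis $\homega_n(\xi) > 2n-d$; the section is called ``case $d=2$'' because one assumes $\homega > 2n-2$, not because the argument is pair-based. In fact the paper's argument is essentially \emph{triple}-based.

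Concretely, the paper proceeds as follows. Under $\homega > 2n-2$, Lemma~\ref{lemme: omega(R) petit degre} (applied with $d=2$) shows that for large $i$ the minimal polynomial $P_i$ is itself irreducible of degree $n$; there is no common factor $D_i$ to extract and no descent mechanism is needed. One then works with the set $I$ of indices where $P_{i-1},P_i,P_{i+1}$ are independent. For consecutive $i<j$ in $I$, the pairwise resultant of $P_i,P_{i+1}$, combined with the invariance $\normH{P_{i+1}}|P_i(\xi)| \asymp \normH{P_j}|P_{j-1}(\xi)|$ from Lemma~\ref{Lem: P_iH_i+1 = P_j-1H_j}, gives $\normH{P_j} \leq \normH{P_i}^{\theta}$ with $\theta=(n-1)/(\homega-n)$. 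The crucial second inequality does not come from another pairwise resultant: for consecutive $h<i<j$ in $I$ one uses the \emph{three} independent polynomials $P_h,P_{h+1},P_{i+1}$ and, via Proposition~\ref{Cor: V_2n-k =  tout l'espace} with $k=2$, a nonzero $(2n-1)\times(2n-1)$ determinant built from $P_h,\dots,X^{n-2}P_h,P_{h+1},\dots,X^{n-2}P_{h+1},X^kP_{i+1}$. Estimating this determinant yields a relation among $\normH{P_h},\normH{P_i},\normH{P_j}$ which, after setting $\normH{P_i}=\normH{P_j}^{\tau_i}$ and letting $\tau=\limsup\tau_i$, gives $p(\tau)\geq 0$ for $p(t)=(n-2)t^2-(\homega-n+1)t+1$. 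Comparing the root of $p$ with $\theta$ produces the contradiction for $n\geq 4$, and the refined analysis for $n=3$.

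Your purely pairwise resultant scheme, even with the gcd splitting, is essentially the Davenport--Schmidt argument and on its own cannot beat $2n-1$: the resultant of two degree-$n$ polynomials is a $2n\times 2n$ determinant, and that size is what produces the $2n$ in the exponent. The improvement to $2n-2$ in the paper comes precisely from replacing this by a $(2n-1)\times(2n-1)$ determinant, which requires three independent good approximations and the dimension result of Section~\ref{section: espaces V_N}. Your anticipated ``main obstacle'' (large $D_i$) is a non-issue in the paper's setup, while the ingredient you are missing (the generalized determinant from a triple) is exactly the point of the proof.
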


We do not think that these upper bounds are optimal. Our main result below is a significant improvement of the previous results as $n$ tends to infinity and does not require Marnat and Moshchevitin's inequality \cite{marnat2018optimal}.
\begin{Thm}
    \label{Thm : main}
    Set $a=1/3$. There exists a computable constant $N\geq 1$ such that, for each $n\geq N$ and any transcendental real number $\xi\in\bR$, we have
    \begin{align*}
        \homega_n(\xi) \leq 2n- an^{1/3}.
    \end{align*}
\end{Thm}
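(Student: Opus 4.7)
The natural strategy is to argue by contradiction. Assume that $\homega_n(\xi) > \omega$ for $\omega := 2n - a n^{1/3}$ with $a = 1/3$, for some sufficiently large $n$. From this hypothesis one extracts, in the standard way initiated by Davenport and Schmidt, an infinite sequence of \emph{minimal polynomials} $(P_i)_{i\geq 1}$ in $\bZ[X]$ of degree $\leq n$, whose heights $X_i = \normH{P_i}$ tend rapidly to infinity and which satisfy $|P_i(\xi)| \leq X_{i+1}^{-\omega}$ together with the usual minimality property among polynomials of height $\leq X_{i+1}$. The whole analysis will then rest on studying, for each index $i$, the $\bR$-vector space spanned by a window of $d$ consecutive minimal polynomials $P_i,\dots,P_{i+d-1}$, where $d$ is a depth parameter to be optimized at the end as a function of $n$. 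The case $d=2$ is the classical Davenport--Schmidt construction (giving $2n-1$, refined in Theorem~\ref{Thm : main d=2} to $2n-2$), and the case $d=3$ corresponds to the Bugeaud--Schleischitz / Schleischitz bounds; the point is that the whole framework is parametrized by $d$, and one needs to push $d$ all the way up to the order $n^{1/3}$.

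The next step is to extract from such a window auxiliary polynomials of controlled height and very small value at $\xi$. The most direct construction is to consider the family $\{X^j P_{i+k} : 0 \leq j \leq \ell,\ 0 \leq k \leq d-1\}$ of $d(\ell+1)$ polynomials of degree $\leq n+\ell$. As soon as $d(\ell+1) > n+\ell+1$, linear dependence modulo $\bZ$ (extracted through Minkowski's second theorem or through an explicit determinantal identity of Wronskian/resultant type) produces non-zero $Q \in \bZ[X]$ with $\deg Q \leq n + \ell$, with $\normH{Q}$ bounded by a product of the $X_j$, and with $|Q(\xi)|$ bounded by $\max_k |P_{i+k}(\xi)|$ times a factor $O(|\xi|^\ell)$. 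Iterating this across several consecutive windows and comparing the resulting auxiliary polynomials to the minimal ones at the appropriate scale yields an inequality of the shape $\omega \leq 2n - f(n,d)$ for an explicit function $f$ that is increasing in $d$ in the admissible range. Balancing the gain, roughly of order $d$, against the cost, of order $d^2/n$ coming from the enlargement of the degree, leads to the optimal choice $d \sim n^{1/3}$ and to a gain of order $n^{1/3}$ in the exponent.

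The main obstacle I expect is twofold. First, the construction tacitly assumes that the polynomials in the window $P_i,\dots,P_{i+d-1}$ are ``sufficiently linearly independent'': degenerate configurations in which this window spans a proper subspace must be treated separately, typically by descent to a shorter window, with Theorem~\ref{Thm : main d=2} providing the base case of the recursion. Second, and more delicately, extracting the explicit constant $a = 1/3$ rather than an unspecified positive constant requires careful bookkeeping of all multiplicative factors in the height/value estimates of the auxiliary polynomials, and an equally careful verification that the optimal $d = d(n)$ remains in the admissible range for every $n \geq N$, where $N$ is the computable threshold in the statement. Once both points are in place, the contradiction comes from inserting the constructed $Q$ back into the minimality property of the $P_i$: the combination forces $\omega \leq 2n - (1/3)\, n^{1/3} + \GrO(1)$, contradicting the initial assumption, and thereby establishes the theorem.
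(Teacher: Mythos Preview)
Your outline captures the overall shape correctly---work with many consecutive best approximations, form a large determinant in degree $2n-j$, extract a gain of order $j$---but it skips over three of the paper's essential technical ingredients, and without them the argument cannot be completed.

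First, you propose to work directly with the minimal polynomials $P_i,\dots,P_{i+d-1}$. The paper instead passes to their highest-degree \emph{irreducible} factors $Q_i$ (Proposition~\ref{prop: existence Q_i}), because the non-vanishing of the generalized determinant at the end requires coprimality, and there is no reason for distinct minimal polynomials to be pairwise coprime. This passage is not free: the $Q_i$ are worse approximations than the $P_i$, and the height sequence $\normH{Q_i}$ can have large gaps. Handling this is what forces the $n^{1/3}$ exponent rather than $n^{1/2}$.

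Second, and more seriously, you do not address how to bound $\omega_n(\xi)$. Your proposal tacitly assumes that heights across a window of $d$ polynomials are comparable, i.e.\ that $\normH{Q_{i+d}} \leq \normH{Q_i}^{C}$ for some controlled $C$. But this is governed by $\tau = \homega/\omega_n(\xi)$, and \emph{a priori} $\omega_n(\xi)$ could be infinite. A substantial part of the paper (Section~\ref{section: estimation de omega_n}, Proposition~\ref{Prop: estimation omega_n(xi)}) is devoted to proving that $\homega_n(\xi) > 2n-d$ forces $\omega_n(\xi) \leq 2n + O(d^2)$, via a delicate iterated resultant argument on the irreducible factors. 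Your phrase ``cost of order $d^2/n$ coming from the enlargement of the degree'' does not account for this; the degree enlargement from $n$ to $n+\ell$ is not where the real loss occurs.

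Third, the step you describe as ``linear dependence modulo $\bZ$ via Minkowski'' goes in the wrong direction. The paper's mechanism is the opposite: one shows (Proposition~\ref{Cor: V_2n-k =  tout l'espace}) that the relevant family spans \emph{all} of $\bR[X]_{\leq 2n-j}$, so that a suitable integer determinant is non-zero, and then bounds it from above. Controlling the heights of the $j+2$ linearly independent polynomials entering this determinant is done via Schmidt's inequality for a twisted height $\HHstar$ (Section~\ref{def petitesse HHstar(V)}, Proposition~\ref{Prop: prop resumee avec tau_j}), which is the tool that propagates height bounds inductively across the window. Your proposal contains no analogue of this step.
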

The constant $a=1/3$ is not optimal. Numerical calculations based on the results from Section~\ref{section: proof of main thm} suggest that we could take $N$ rather ``small'' in Theorem~\ref{Thm : main} (maybe $N\leq 10^4$?). However, as we wanted to keep the arguments and calculations as clear and simple as possible, we did not try to provide an explicit value of $N$. 

\medskip

Theorem~\ref{Thm : main} can be compared to \cite[Theorem~1.1]{poels2022simultaneous}, where we study $\hlambda_n(\xi)$, the uniform exponent of rational simultaneous approximation to the successive powers $\Xi=(1,\xi,\xi^2,\dots,\xi^n)$ (which is known to be, in a sense, dual to $\homega_n(\xi)$), see Section~\ref{section: notation} for the precise definition and more details. We were not able to deduce one result from the other, even though some objects from the proofs clearly play similar roles. For example, given a polynomial $P\in\bZ[X]$ of degree at most $n$, which is a good approximation, we can associate the $k+1$ polynomials $P,XP,\dots,X^{k}P$ of degree at most $n+k$. They provide information on $\homega_{n+k}(\xi)$. On the other hand, if we consider $\by\in\bZ^{n+1}$ which is a good approximation of $\Xi$ (for simultaneous approximation), we can associate the $k+1$ blocks of successive $n+1-k$ coordinates of $\by$, which are rather good approximations of $(1,\xi,\dots,\xi^{n-k})$. They in turn provide information on $\hlambda_{n-k}(\xi)$. Note that the difficulties in the proofs of both theorems are not in the same places. In particular, in this paper we have to work with \textsl{irreducible} polynomials, a rather heavy constraint. Also, one of the most delicate parts of our approach is to bound from above the ordinary exponent $\omega_n(\xi)$, whereas this is rather ``simple'' to do that for the ordinary exponent $\lambda_n(\xi)$ in \cite{poels2022simultaneous}.

\medskip

Before presenting our strategy, let us quickly explain Davenport and Schmidt's proof of the upper bound \eqref{eq: estimation Davenport and Schmidt}. Given a real number $\homega < \homega_n(\xi)$, they show, using elementary means and Gelfond's Lemma, that there are infinitely many pairs of coprime polynomials $P,Q\in \bZ[X]$ of degree at most $n$, such that
\[
    \normH{Q} \leq \normH{P} \AND \max\{|Q(\xi)|, |P(\xi)|\} \ll \normH{P}^{-\homega},
\]
(where the implicit constant only depends on $n$). It implies that the resultant $\Res(P,Q)$, which is a non-zero integer, satisfies
\[
    1 \leq |\Res(P,Q)| \ll \normH{P}^{n-1}\normH{Q}^{n-1} \max\big\{\normH{P}|Q(\xi)|, \normH{Q}|P(\xi)| \big\} \ll \normH{P}^{2n-1-\homega}.
\]
The first upper bound for $|\Res(P,Q)|$ is classical, see Lemma~\ref{lem: estimation classique du resultant}. Since $\normH{P}$ can be arbitrarily large, they deduced that the exponent $2n-1-\homega$ is non-negative. Estimate \eqref{eq: estimation Davenport and Schmidt} follows by letting $\homega$ tend to $\homega_n(\xi)$. Note that the term $2n$ in \eqref{eq: estimation Davenport and Schmidt} is directly related to the size of the $2n\times2n$ determinant defining $\Res(P,Q)$ (if we suppose that $P$ and $Q$ have degree exactly $n$).

\medskip

The key idea in the proof of our main Theorem~\ref{Thm : main}  is to work with a large number of ``good'' linearly independent polynomial approximations $Q_0, \dots Q_{j+1}$ rather than just two polynomials $P$ and $Q$ as above. By doing this, we can replace $\Res(P,Q)$ by a non-zero $(2n-j)\times (2n-j)$ determinant (whose non-zero entries are among the coefficients of the polynomials $Q_k$). Under the ideal and unlikely assumption that
\begin{align}
    \label{eq intro: controle des Q_i}
    \normH{Q_k} \leq \normH{Q_0} \AND |Q_k(\xi)| \ll \normH{Q_0}^{-\homega} \qquad \textrm{(for $k=0,\dots,j$)},
\end{align}
the aforementioned determinant would be bounded from above by $\normH{Q_0}^{2n-j-1 - \homega}$. So, together with an additional non-vanishing assumption, it would lead to $\homega_n(\xi) \leq 2n-j-1$. Several new difficulties arise when trying to make the above arguments work. We introduce the tools for the construction of the generalized resultant in Section~\ref{section: espaces V_N}. To ensure that this determinant does not vanish, we need the extra assumption that $Q_0,\dots,Q_{j+1}$ are irreducible polynomials. The idea is to first fix a sequence of best approximations, that we called \textsl{minimal polynomials}, and then to consider their highest-degree irreducible factors  (which also happen to be rather good approximations). We deal with this question in Section~\ref{section: sequence des Q_i}. Two obstacles remain. Firstly, note that it may be possible that the best polynomial approximations span a subspace of dimension~$3$, even when $\xi$ is transcendental and $n$ is large, see \cite[Theorem~1.3]{moshchevitin2007best}. Therefore, as soon as $j > 1$ (we will later choose $j\asymp n^{1/3}$), we have to justify that we can find $j+2$ linearly independent polynomials as above. The second major problem is the control of the sequence $Q_0,\dots,Q_{j+1}$. Estimates \eqref{eq intro: controle des Q_i} seem out of reach, instead we get upper bounds of the form
\begin{align}
    \label{eq intro: controle des Q_i V2}
     \normH{Q_k} \leq \normH{Q_0} \AND |Q_k(\xi)| \ll \normH{Q_0}|^{-\homega\theta} \qquad \textrm{(for $k=0,\dots,j$)},
\end{align}
where $\theta < 1$ depends only on $n$ and $j$, and is ``close'' to $1$ if $j$ is not too large compared to $n$. The main ingredients for showing this are related to \textsl{twisted heights}, see Sections~\ref{def petitesse HHstar(V)} and Appendix~\ref{subsection: twisted heights}, and an important inequality on the height of subspaces due to Schmidt. 
The parameter $\theta$ in \eqref{eq intro: controle des Q_i V2} is a function of the exponent of best approximation $\omega_n(\xi)$. We show in Section~\ref{section: estimation de omega_n} that if the uniform exponent satisfies $\homega_n(\xi) \geq 2n-d$ (with $d \ll  n^{1/3}$), then the ordinary exponent $\omega_n(\xi)$ is bounded from above by $2n+2d^2$, and the ratio $\homega_n(\xi)/\omega_n(\xi)$ is therefore close to~$1$. This part, which is essentially independent from the others, is rather delicate, because we work with the polynomials $Q_i$. They are certainly irreducible, but not as good approximations as the minimal polynomials. More precisely, there could be large gaps between the height of two successive $Q_i$. If we could drop the irreducibility condition and directly work with the sequence of minimal polynomials, we could possibly replace the upper bound $2n- \GrO(n^{1/3})$ with  $2n- \GrO(n^{1/2})$ in Theorem~\ref{Thm : main}. Section~\ref{section: proof of main thm} is devoted to the proof of Theorem~\ref{Thm : main}.

\section{Notation}
\label{section: notation}
Throughout this paper, $\xi$ denotes a transcendental real number.

\medskip

The floor (resp. ceiling) function is denoted by $\lfloor \cdot \rfloor$ (resp. $\lceil \cdot \rceil$). If $f, g : I \rightarrow [0, +\infty)$ are two functions on a set $I$, we write $f = \GrO(g)$ or $f \ll g$ or $g \gg f$ to mean that there is a positive constant $c$ such that $f (x) \leq cg(x)$ for each $x\in I$. We write $f \asymp g$ when both $f \ll g$ and $g \ll f$ hold. 

\medskip

Let $K$ be a field. If $\cA$ is a subset of a $K$-vector space $V$, we denote by $\Vect[K]{\cA} \subset V$ the $K$-vector space spanned by $\cA$, with the convention that $\Vect[K]{\emptyset} = \{0\}$.

\medskip

Given a ring $A$ (typically $A=\bR$ or $\bZ$) and an integer $n\geq 0$, we denote by $A[X]$ the ring of polynomials in $X$ with coefficients in $A$, and by $A[X]_{\leq n} \subset A[X]$ the subgroup of polynomials of degree at most $n$. We say that $P\in\bZ[X]$ is \textsl{primitive} if it non-zero and the greatest common divisor of its coefficients is $1$. Given $P=\sum_{k=0}^{n} a_kX^k \in \bR[X]$, we set
\begin{align*}
    \normH{P} = \max_{0\leq k \leq n} |a_k|.
\end{align*}

Gelfond's Lemma can be written as follows (see \eg \cite[Lemma A.3]{bugeaud2004approximation} as well as \cite{brownawell1974sequences}). For any non-zero polynomials $P_1,\dots,P_r\in\bR[X]$ with product $P = P_1\cdots P_r$ of degree at most $n$, we have
\begin{equation}
    \label{eq: Gelfond's Lemma}
    e^{-n} \normH{P_1}\cdots \normH{P_r} < \normH{P} < e^{n} \normH{P_1}\cdots \normH{P_r}.
\end{equation}
In particular, for each non-zero polynomial $P \in \bZ[X]_{\leq n}$ and each factor $Q\in\bZ[X]$ of $P$, we have $e^{-n}\normH{Q} < \normH{P}$. We will often use \eqref{eq: Gelfond's Lemma} as follows. If $Q\in\bZ[X]_{\leq n}$ is irreducible and if $P\in\bZ[X]_{\leq n}$ is a non-zero polynomial which satisfies $\normH{P} \leq e^{-n}\normH{Q}$, then $Q$ cannot divide $P$. They are thus coprime polynomials.

\medskip

We recall the definition of the resultant, which, as explained in the introduction, is useful for estimating the exponent $\homega_n(\xi)$ (also see Section~\ref{section: resultant et premier resultat}). Let $P,Q\in\bZ[X]$ be non-constant polynomials of degree $p$ and $q$ respectively, and let $a_i,b_j\in \bZ$ such that $P(X) = \sum_{k=0}^{p} a_kX^k$ and $Q(X) = \sum_{k=0}^{q} b_kX^k$.
Their \textsl{resultant} $\Res(P,Q)$ is defined as the $(q+p)$-dimensional determinant
\begin{align}
    \label{eq det resultant}
    \Res(P,Q) = \begin{array}{cc}
        \left | \begin{array}{cccc}
                  a_p & 0 & \dots \\
                  a_{p-1} & a_p  &     \\
                  \vdots & \vdots & \ddots \\
                  a_0    &   \\
                  0      &  a_0 \\
                  \vdots &  \vdots & \ddots  \\
                         &         &        & a_0
        \end{array}\right. &
        \left. \begin{array}{cccc}
                  b_q & 0 & \dots \\
                  b_{q-1} & b_q  &     \\
                  \vdots & \vdots & \ddots \\
                  b_0    &   \\
                  0      &  b_0 \\
                  \vdots &  \vdots & \ddots  \\
                         &         &        & b_p
        \end{array}\right|  \\
    \underbrace{\hspace*{2.7cm}}_{q}   & \underbrace{\hspace*{2.7cm}}_{p}
    \end{array}.
\end{align}

Besides the exponents of linear approximation $\omega_n$ and $\homega_n$, we will also need the following exponents of simultaneous rational approximation. For each positive integer $n$, the exponent $\hlambda_n(\xi)$ (resp. $\lambda_n(\xi)$) is the supremum of the real numbers $\lambda \geq 0$ such that the system
\begin{align*}
    |y_0| \leq Y \AND L(\by) \leq Y^{-\lambda} \quad \textrm{where }  L(\by):=\max_{1\le k \le n} |y_0\xi^k - y_k|,
\end{align*}
admits a non-zero integer solution $\by=(y_0,\dots,y_n)\in\bZ^{n+1}$ for each sufficiently large $Y\geq 1$ (resp. for arbitrarily large $Y$). Dirichlet's theorem \cite[\S II.1, Theorem 1A]{schmidt1996diophantine} implies that $\hlambda_n(\xi)\geq 1/n$. The best upper bounds known to date for $\hlambda_n(\xi)$ when $n\geq 4$ are established in a join work with Roy in \cite{poels2022simultaneous}. In particular, there is an explicit positive constant $a$ such that
\[
    \hlambda_n(\xi) \leq \frac{1}{n/2+an^{1/2}+1/3},
\]
and sharper results are also obtained when $n$ is small.

\section{Minimal polynomials}
\label{section: minimal pol}

A \textsl{sequence of minimal polynomials} (associated to $n$ and $\xi$) is a sequence $(P_i)_{i\geq 0}$ of non-zero polynomials in $\bZ[X]_{\leq n}$ satisfying the following properties
\begin{enumerate}[label=(\roman*)]
  \item the sequence $\big(\normH{P_i}\big)_{i\geq 0}$ is strictly increasing,
  \item the sequence $\big(|P_i(\xi)|\big)_{i\geq 0}$ is strictly decreasing,
  \item if $|P(\xi)| < |P_i(\xi)|$ for some index $i\geq 0$ and a non-zero $P\in\bZ[X]_{\leq n}$, then $\normH{P}\geq \normH{P_{i+1}}$.
\end{enumerate}

Note that if we require the dominant coefficient of $P_i$ to be positive (and since $\xi$ is transcendental), then the above sequence is uniquely determined (up to the first terms). Let $(P_i)_{i\geq 0}$ be a sequence as above. We have the classical formulas:
\begin{align}
    \label{eq: exposant via minimal points}
    \homega_n(\xi) = \liminf_{i\rightarrow \infty} \frac{-\log |P_i(\xi)|}{\log \normH{P_{i+1}}} \AND \omega_n(\xi) = \limsup_{i\rightarrow \infty} \frac{-\log |P_i(\xi)|}{\log \normH{P_{i}}}.
\end{align}
In particular, given a positive real number $\homega$ with $\homega < \homega_n(\xi)$, then we have, for each sufficiently large index $i$,
\begin{equation}
    \label{eq: H(P_i+1) controle par H(P_i)}
    |P_i(\xi)| \leq \normH{P_{i+1}}^{-\homega} \AND \normH{P_{i+1}}^\tau \leq \normH{P_i},\quad \textrm{where } \tau:= \frac{\homega}{\omega_n(\xi)},
\end{equation}
(with the convention $\tau = 0$ if $\omega_n(\xi) = \infty$). The second inequality in \eqref{eq: H(P_i+1) controle par H(P_i)} justifies
the need to bound from above $\omega_n(\xi)$. Given a non-zero $P \in \bZ[X]$, we set $\omega(P)=0$ if $\normH{P} = 1$. Otherwise, we denote by $\omega(P)$ the real number satisfying
\[
    |P(\xi)| = \normH{P}^{-\omega(P)}.
\]
With this notation, we have
\begin{align}
    \label{eq: encadrement omega(P_i)}
    \omega_n(\xi) = \limsup_{\substack{\normH{P}\rightarrow \infty \\ P\in\bZ[X]_{\leq n}}} \omega(P) = \limsup_{i\rightarrow \infty} \omega(P_i) \AND \liminf_{i\rightarrow \infty} \omega(P_i) \geq \homega_n(\xi).
\end{align}

The following results are well-known. We prove them for the sake of completion. The first one follows from the arguments of the proof of \cite[Lemma 2]{davenport1967approximation} (also see \cite[Lemma~4.1]{roy2004approximation}).

\begin{Lem}
    \label{Lem: P_i, P_i+1 est une Z-base}
    Let $i\geq 0$ and write $V_i =  \Vect[\bR]{P_i,P_{i+1}} \subset \bR[X]_{\leq n}$. Then $\{P_i, P_{i+1}\}$ forms a $\bZ$--basis of the lattice $V_i\cap \bZ[X]_{\leq n}$.
\end{Lem}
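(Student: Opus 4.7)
My approach is a standard lattice reduction. The plan is, given an arbitrary $Q$ in the lattice $V_i \cap \bZ[X]_{\leq n}$, to write it uniquely as $Q = \alpha P_i + \beta P_{i+1}$ with $\alpha, \beta \in \bR$, then subtract the nearest integer multiples of $P_i$ and $P_{i+1}$ and show that the resulting residue vanishes by appealing to the minimality property (iii).

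First I would verify that $P_i$ and $P_{i+1}$ are linearly independent over $\bR$; otherwise $P_{i+1} = \lambda P_i$ would force $|\lambda| = \normH{P_{i+1}}/\normH{P_i} > 1$ by (i) and simultaneously $|\lambda| = |P_{i+1}(\xi)|/|P_i(\xi)| < 1$ by (ii), a contradiction. With this in hand, $V_i$ is a plane and the pair $(\alpha, \beta)$ attached to $Q$ is unique.

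Next, I would choose $m, k \in \bZ$ with $|\alpha - m|, |\beta - k| \leq 1/2$ and form $Q' := Q - mP_i - kP_{i+1} = \alpha' P_i + \beta' P_{i+1} \in \bZ[X]_{\leq n}$, with $|\alpha'|, |\beta'|\leq 1/2$. The target becomes $Q'=0$. Assuming $Q' \neq 0$, the triangle inequality combined with the strict monotonicities supplied by (i) and (ii) would yield
\begin{align*}
|Q'(\xi)| &\leq \tfrac{1}{2}|P_i(\xi)| + \tfrac{1}{2}|P_{i+1}(\xi)| < |P_i(\xi)|, \\
\normH{Q'} &\leq \tfrac{1}{2}\normH{P_i} + \tfrac{1}{2}\normH{P_{i+1}} < \normH{P_{i+1}},
\end{align*}
and property (iii) applied to $Q'$ at index $i$ would then force $\normH{Q'} \geq \normH{P_{i+1}}$, contradicting the second display. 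Thus $Q'=0$, giving $\alpha=m$, $\beta=k \in \bZ$ by linear independence, so $Q \in \bZ P_i + \bZ P_{i+1}$ as required.

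The only delicate point I foresee is ensuring that both displayed inequalities are \emph{strict}; this comes for free from the strict monotonicity of the sequences $(\normH{P_i})_i$ and $(|P_i(\xi)|)_i$ built into (i) and (ii), so I do not expect any further obstacle.
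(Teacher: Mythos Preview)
Your proposal is correct and follows essentially the same argument as the paper's proof: both reduce an arbitrary lattice point modulo $\bZ P_i + \bZ P_{i+1}$ to obtain a residue $Q'$ with coefficients in $[-1/2,1/2]$, and then use properties (i)--(iii) to force $Q'=0$. The only difference is presentational (you argue directly while the paper phrases it as a contradiction, and you spell out the linear independence of $P_i,P_{i+1}$, which the paper takes for granted).
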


\begin{proof}
     By contradiction, suppose that $\{P_i, P_{i+1}\}$ is not a $\bZ$--basis of $V_i\cap \bZ[X]_{\leq n}$. Then there exists a non-zero $Q\in \bZ[X]_{\leq n}$ which may be written as $Q = rP_i + sP_{i+1}$, where $r,s\in\bQ$ satisfy $|r|,|s|\leq 1/2$. In particular, we have
    \begin{align*}
        \normH{Q} \leq |r|\normH{P_i} + |s|\normH{P_{i+1}} < \normH{P_{i+1}} \AND |Q(\xi)| \leq |r||P_i(\xi)| + |s||P_{i+1}(\xi)| < |P_{i}(\xi)|.
    \end{align*}
    This contradicts the minimality property of $P_i$.
\end{proof}

The next result is analogous to the second part of \cite[Lemma~4.1]{roy2004approximation}. The construction of $S_i$ is also somewhat similar to what Davenport and Schmidt do in \cite{davenport1967approximation}.

\begin{Lem}
    \label{Lem: P_iH_i+1 = P_j-1H_j}
    For each $i\geq 0$, define
    \[
        S_i = P_i(\xi)P_{i+1} - P_{i+1}(\xi)P_i \in\bR[X]_{\leq n}.
    \]
    Then
    \[
        \frac{1}{2}\normH{S_i} \leq \normH{P_{i+1}}|P_i(\xi)| \leq 2\normH{S_i}.
    \]
    Moreover, if for integers $0\leq i < j$ the space spanned by $P_i, P_{i+1}, \cdots, P_{j}$ has dimension~$2$, then $S_{j-1} = \pm S_i$. In particular
    \[
        \normH{P_{i+1}}|P_i(\xi)| \asymp \normH{P_{j}}|P_{j-1}(\xi)|.
    \]
\end{Lem}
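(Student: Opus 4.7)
The plan is to prove the three assertions in sequence: first the two-sided estimate on $\normH{S_i}$, then the invariance $S_{j-1}=\pm S_i$, from which the final $\asymp$ follows directly. The upper bound $\normH{S_i}\leq 2\normH{P_{i+1}}|P_i(\xi)|$ is immediate from the triangle inequality $\normH{S_i}\leq |P_i(\xi)|\normH{P_{i+1}}+|P_{i+1}(\xi)|\normH{P_i}$, since properties (i) and (ii) of a sequence of minimal polynomials ensure $|P_{i+1}(\xi)|<|P_i(\xi)|$ and $\normH{P_i}<\normH{P_{i+1}}$, so the second term is dominated by the first.

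The reverse inequality $\normH{S_i}\geq \tfrac{1}{2}|P_i(\xi)|\normH{P_{i+1}}$ is what I expect to be the main obstacle. I would factor $S_i=P_i(\xi)(P_{i+1}-\alpha P_i)$ where $\alpha:=P_{i+1}(\xi)/P_i(\xi)$ is well-defined and satisfies $|\alpha|<1$ (since $\xi$ is transcendental and $(|P_k(\xi)|)$ is strictly decreasing), and then argue by contradiction: suppose $\normH{P_{i+1}-\alpha P_i}<\tfrac{1}{2}\normH{P_{i+1}}$. Let $\tilde\alpha\in\{-1,0,1\}$ be the nearest integer to $\alpha$, so $|\alpha-\tilde\alpha|\leq \tfrac{1}{2}$. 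Then $Q:=P_{i+1}-\tilde\alpha P_i$ is a non-zero element of $\bZ[X]_{\leq n}$ (by linear independence of $P_i,P_{i+1}$) satisfying
\[
\normH{Q}\leq \normH{P_{i+1}-\alpha P_i}+|\alpha-\tilde\alpha|\normH{P_i}<\tfrac{1}{2}\normH{P_{i+1}}+\tfrac{1}{2}\normH{P_i}<\normH{P_{i+1}},
\]
while $|Q(\xi)|=|\alpha-\tilde\alpha||P_i(\xi)|\leq \tfrac{1}{2}|P_i(\xi)|<|P_i(\xi)|$. This contradicts the minimality property (iii) of the sequence $(P_k)$, proving the claim.

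For the second assertion, observe that $S_i$ lies in $V_i=\Vect[\bR]{P_i,P_{i+1}}$ and vanishes at $\xi$. Under the hypothesis that $P_i,\dots,P_j$ all belong to the two-dimensional space $V_i$, we have $V_{j-1}=V_i$ (note $P_{j-1},P_j$ are automatically linearly independent by strict monotonicity of the heights and of the values at $\xi$). By Lemma~\ref{Lem: P_i, P_i+1 est une Z-base} applied to both pairs $(P_i,P_{i+1})$ and $(P_{j-1},P_j)$, each forms a $\bZ$-basis of the same lattice $V_i\cap\bZ[X]_{\leq n}$, and is therefore obtained from the other by an integer change of basis of determinant $\pm 1$. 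Writing $P_{j-1}=aP_i+bP_{i+1}$ and $P_j=cP_i+dP_{i+1}$ with $ad-bc=\pm 1$, a direct expansion yields $S_{j-1}=(ad-bc)(P_i(\xi)P_{i+1}-P_{i+1}(\xi)P_i)=\pm S_i$. The final estimate $\normH{P_{i+1}}|P_i(\xi)|\asymp \normH{P_j}|P_{j-1}(\xi)|$ then follows from $\normH{S_i}=\normH{S_{j-1}}$ combined with the two-sided bound applied at the indices $i$ and $j-1$.
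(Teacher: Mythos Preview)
Your proof is correct and follows essentially the same strategy as the paper's. For the lower bound on $\normH{S_i}$ the paper performs a case analysis on the polynomials $R_\pm=P_{i+1}\pm P_i$, while your nearest-integer rounding of $\alpha=P_{i+1}(\xi)/P_i(\xi)$ packages the same dichotomy into a single construction (your cases $\tilde\alpha=0$ and $\tilde\alpha=\pm 1$ correspond precisely to the paper's two cases); for the invariance $S_{j-1}=\pm S_i$ the paper argues step by step by induction ($S_i=\pm S_{i+1}=\cdots=\pm S_{j-1}$), whereas your direct $\GL_2(\bZ)$ change-of-basis computation reaches the same conclusion in one stroke.
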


\begin{Rem}
    Note that the quantity $\normH{S_i}$ satisfies $\normH{S_i} \asymp \HHstar(V_i)$, where $\HHstar$ is defined in Section~\ref{def petitesse HHstar(V)} and $V_i=\Vect[\bR]{P_i,P_{i+1}}$. We will study this function more deeply in full generality later.
\end{Rem}

\begin{proof}
    We easily get $\normH{S_i} \leq 2\normH{P_{i+1}}|P_i(\xi)|$. Define $R_+, R_- \in\bZ[X]_{\leq n}$ by
    \[
        R_\pm = P_{i+1} \pm P_i.
    \]
    Suppose that there exists $\ee\in\{+,-\}$ such that $|R_\ee(\xi)| \leq |P_i(\xi)|/2$. Then by minimality of $P_i$, we must have $\normH{R_\ee} \geq \normH{P_{i+1}}$. Since $S_i = P_i(\xi)R_\ee - R_\ee(\xi) P_i$, we find
    \begin{align*}
        \normH{S_i} \geq |P_i(\xi)|\normH{R_\ee} - |R_\ee(\xi)|\normH{P_i} \geq \frac{1}{2} \normH{P_{i+1}}|P_i(\xi)|.
    \end{align*}
    Assume that $|R_+(\xi)|, |R_-(\xi)| \geq |P_i(\xi)|/2$. This is equivalent to
    \[
        |P_{i+1}(\xi)| \leq \frac{1}{2}|P_i(\xi)|.
    \]
    Again, this yields $\normH{S_i} \geq |P_i(\xi)|\normH{P_{i+1}} - |P_{i+1}(\xi)|\normH{P_i} \geq \normH{P_{i+1}}|P_i(\xi)|/2$.

    \medskip

    Now, let us write $V_i = \Vect[\bR]{P_i,\dots,P_j}$, with $j > i$, and suppose that $V_i$ has dimension $2$. We need to prove that $S_{j-1} = \pm S_i$. If $j=i+1$ it is automatic, we may therefore assume that $j\geq i+2$. By Lemma~\ref{Lem: P_i, P_i+1 est une Z-base}, there exist $a,b\in\bZ$ such that $P_i = aP_{i+1} + bP_{i+2}$. Since $\{P_i,P_{i+1}\}$ is also a $\bZ$--basis of $V_i$, we have $b=\pm 1$, and we deduce that
    \[
        S_i = \big(aP_{i+1}(\xi) + bP_{i+2}(\xi)\big)P_{i+1} - P_{i+1}(\xi)\big(aP_{i+1} + bP_{i+2}\big) = -bS_{i+1} = \pm S_{i+1}.
    \]
    By induction, we get $S_i = \pm S_{i+1} = \cdots = \pm S_{j-1}$.
\end{proof}

The proof of \cite[Lemma~3]{davenport1967approximation} (which deals with the case $n=2$) yields the classical following result.

\begin{Lem}
    \label{lem: I infini}
    Suppose $n\geq 2$. Then, there are infinitely many indices $i\geq 1$ for which $P_{i-1}$, $P_i$ and $P_{i+1}$ are linearly independent.
\end{Lem}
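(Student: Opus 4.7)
The plan is to argue by contradiction, so assume that beyond some index $i_0 \geq 1$ every triple $P_{i-1}, P_i, P_{i+1}$ is linearly dependent. A preliminary observation is that $P_i$ and $P_{i+1}$ are automatically linearly independent: if $P_{i+1} = cP_i$ for some $c \in \bR^*$, then property~(i) forces $|c|>1$ while property~(ii) forces $|c|<1$, which is impossible. Consequently, the assumed dependence of $P_{i-1}, P_i, P_{i+1}$ means $P_{i+1} \in \Vect[\bR]{P_{i-1}, P_i}$ for every $i \geq i_0$, and an immediate induction shows that all $P_j$ with $j \geq i_0 - 1$ lie in the two-dimensional real subspace $V := \Vect[\bR]{P_{i_0-1}, P_{i_0}}$.

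I would then invoke Lemma~\ref{Lem: P_iH_i+1 = P_j-1H_j}: since $\dim V = 2$ and each $P_j$ with $j \geq i_0 - 1$ belongs to $V$, that lemma yields $S_{j-1} = \pm S_{i_0-1}$ for every $j > i_0 - 1$, and hence
\begin{equation*}
   \normH{P_j}\,|P_{j-1}(\xi)| \asymp \normH{S_{i_0-1}},
\end{equation*}
a positive constant independent of $j$ (positive since $\xi$ is transcendental).

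The contradiction will come from Dirichlet's theorem \eqref{eq: Dirichlet system}, which provides, for every sufficiently large $H$, a non-zero $R \in \bZ[X]_{\leq n}$ with $\normH{R} \ll H$ and $|R(\xi)| \leq H^{-n}$, the implicit constant depending only on $n$ and $\xi$ (it absorbs the loss incurred in bounding the coefficient $x_0$ from the other ones together with $|R(\xi)|$). Choosing $H$ to be a small enough fraction of $\normH{P_{j+1}}$, one ensures $\normH{R} < \normH{P_{j+1}}$ and thus, by the minimality property~(iii), $|P_j(\xi)| \leq |R(\xi)| \ll \normH{P_{j+1}}^{-n}$ for all sufficiently large $j$. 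Combining with the previous step,
\begin{equation*}
   \normH{S_{i_0-1}} \asymp \normH{P_{j+1}}\,|P_j(\xi)| \ll \normH{P_{j+1}}^{\,1-n},
\end{equation*}
and since $n \geq 2$ and the heights $\normH{P_{j+1}}$ are strictly increasing positive integers (hence tend to infinity), the right-hand side tends to $0$ as $j \to \infty$, contradicting the positive lower bound on the left.

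The main technical point, which is minor, is arranging the strict inequality $\normH{R} < \normH{P_{j+1}}$ when applying Dirichlet's theorem: this requires absorbing a $\xi$-dependent multiplicative constant between the bound $\max_{1\leq k\leq n}|x_k| \leq H$ in~\eqref{eq: Dirichlet system} and the full height $\normH{R} = \max_{0 \leq k \leq n}|x_k|$. Once this is handled, the argument reduces to the clean combination of Lemma~\ref{Lem: P_iH_i+1 = P_j-1H_j} with the standard fact $\homega_n(\xi) \geq n$ read off the sequence $(P_j)$.
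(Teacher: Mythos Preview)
Your proof is correct and follows essentially the same route as the paper: contradiction via Lemma~\ref{Lem: P_iH_i+1 = P_j-1H_j}, showing that $\normH{P_j}\,|P_{j-1}(\xi)|$ is bounded below by a positive constant yet tends to~$0$. The only cosmetic difference is that the paper invokes formula~\eqref{eq: exposant via minimal points} together with $\homega_n(\xi)\geq n$ to get $|P_{j-1}(\xi)|\leq\normH{P_j}^{-n+o(1)}$ directly, whereas you re-derive this bound from Dirichlet's theorem and property~(iii); your discussion of the technical constant in that step is accurate but could be bypassed entirely by citing~\eqref{eq: exposant via minimal points}.
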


\begin{proof}
    By contradiction, suppose that there exists $i\geq 0$ such that $V=\Vect[\bR]{P_i,P_{i+1},\dots}$ has dimension $2$. By Lemma~\ref{Lem: P_iH_i+1 = P_j-1H_j} there exists $c>0$ such that for each $j > i$ we have
    \begin{align*}
        0 < \normH{P_{i+1}}|P_i(\xi)| \leq c\normH{P_j}|P_{j-1}|.
    \end{align*}
    This leads to a contradiction since $\normH{P_j}|P_{j-1}| \leq \normH{P_j}^{1-\homega_n(\xi)+o(1)}$ tends to $0$ as $j$ tends to infinity.
\end{proof}

\begin{Rem}
    As mentioned in the introduction, it is however possible that all polynomials $P_i$ with $i$ large enough lie in a subspace of dimension $3$ , see \cite[Theorem~1.3]{moshchevitin2007best}.
\end{Rem}

\section{Resultant and first estimates}
\label{section: resultant et premier resultat}

The following useful result can be easily obtained from the proof of \cite[§5]{davenport1969approximation} (also see of \cite[Lemma 1]{brownawell1974sequences}). We recall the arguments since they illustrate (in a simpler situation) how we will deal with generalized determinants.

\begin{Lem}
    \label{lem: estimation classique du resultant}
    Let $p,q$ be positive integers with $p,q\leq n$. There exists a constant $c>0$ depending on $\xi$ and $n$ only, with the following property. For any polynomials $P,Q\in\bZ[X]$ of degree $p$ and $q$ respectively, we have
    \begin{equation*}
        |\Res(P,Q)| \leq c \normH{P}^{q-1}\normH{Q}^{p-1} \max\big\{\normH{P}|Q(\xi)|, \normH{Q}|P(\xi)| \big\}.
    \end{equation*}
\end{Lem}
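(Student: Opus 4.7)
The plan is to use the classical trick of row/column operations on the Sylvester matrix, converting one row into a row whose entries involve $P(\xi)$ and $Q(\xi)$, and then expanding along that row and using Hadamard-type bounds on the remaining cofactors.

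More precisely, denote by $M=M(P,Q)$ the $(p+q)\times(p+q)$ Sylvester matrix of \eqref{eq det resultant}, whose rows I label by the monomials $X^{p+q-1},X^{p+q-2},\dots,X,1$. I would add to the first row $\xi$ times the second row, $\xi^2$ times the third row, $\dots$, $\xi^{p+q-1}$ times the last row. Since the $k$-th column of $M$ (for $1\le k\le q$) encodes the coefficients of $X^{k-1}P$ and the $(q+k)$-th column (for $1\le k\le p$) encodes those of $X^{k-1}Q$, this row operation replaces the first row by
\[
\bigl(\xi^{q-1}P(\xi),\ \xi^{q-2}P(\xi),\ \dots,\ P(\xi),\ \xi^{p-1}Q(\xi),\ \dots,\ Q(\xi)\bigr),
\]
without changing $\det M=\Res(P,Q)$.

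Next, I would expand the resulting determinant along this modified first row. Each term is the product of one of the above $p+q$ entries and a $(p+q-1)\times(p+q-1)$ cofactor $C$ obtained from $M$ by deleting the first row and one column. If the deleted column is one of the $q$ columns coming from $P$, then $C$ has $q-1$ columns whose nonzero entries are coefficients of $P$ (of absolute value $\le\normH{P}$) and $p$ columns whose nonzero entries are coefficients of $Q$ (of absolute value $\le\normH{Q}$); the full Leibniz expansion then yields
\[
|C|\le (p+q-1)!\,\normH{P}^{\,q-1}\normH{Q}^{\,p},
\]
and symmetrically $|C|\le (p+q-1)!\,\normH{P}^{\,q}\normH{Q}^{\,p-1}$ when the deleted column comes from $Q$.

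Combining these cofactor bounds with the modified first row, and absorbing the factors $|\xi|^{q-k},|\xi|^{p-k}\le \max(1,|\xi|^{n-1})$ together with the combinatorial factor $(p+q-1)!\le (2n-1)!$ into a single constant $c$ depending only on $\xi$ and $n$, I obtain
\[
|\Res(P,Q)|\ \le\ c\bigl(|P(\xi)|\,\normH{P}^{\,q-1}\normH{Q}^{\,p}+|Q(\xi)|\,\normH{P}^{\,q}\normH{Q}^{\,p-1}\bigr)\ \le\ 2c\,\normH{P}^{\,q-1}\normH{Q}^{\,p-1}\max\{\normH{P}|Q(\xi)|,\normH{Q}|P(\xi)|\},
\]
which is the desired inequality. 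There is no real obstacle here; the only point requiring a little care is to keep track of how many $P$-columns versus $Q$-columns remain in each cofactor, which is what produces the asymmetric exponents $q-1$ and $p-1$ together with the extra factors $\normH{Q}$ and $\normH{P}$ inside the maximum.
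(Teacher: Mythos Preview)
Your approach is essentially identical to the paper's: perform a row operation on the Sylvester matrix so that one row becomes $(\xi^{q-1}P(\xi),\dots,P(\xi),\xi^{p-1}Q(\xi),\dots,Q(\xi))$, then expand and bound the remaining entries by $\normH{P}$ or $\normH{Q}$ according to which block the column lies in. The paper does exactly this (adding multiples of the other rows to the \emph{last} row) and reaches the same two-term bound before passing to the maximum.

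One small slip: with the rows indexed by $X^{p+q-1},\dots,X,1$ from top to bottom, adding $\xi,\xi^2,\dots,\xi^{p+q-1}$ times rows $2,\dots,p+q$ to the \emph{first} row produces entries of the form $\sum_m \xi^{p+q-1-m}[X^m]R_j=\xi^{p+q-1}R_j(1/\xi)$, not $R_j(\xi)$. (Try $p=q=1$: you get $a_1+\xi a_0$, not $a_0+\xi a_1$.) The fix is trivial---either operate on the last row with weights $\xi^{p+q-i}$ as the paper does, or reverse the powers---and the rest of your argument goes through unchanged.
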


\begin{proof}
    Let $a_i,b_j\in \bZ$ such that $P(X) = \sum_{k=0}^{p} a_kX^k$ and $Q(X) = \sum_{k=0}^{q} b_kX^k$. For $i=1,\dots,p+q-1$, we add to the last row of the determinant \eqref{eq det resultant} the $i$-th row multiplied by $\xi^{p+q-i}$. This last row now becomes
    \begin{align*}
        \Big(\xi^{q-1}P(\xi),   \dots,  \xi P(\xi), P(\xi), \xi^{p-1}Q(\xi), \dots, \xi Q(\xi),  Q(\xi)\Big).
    \end{align*}
    Using the upper bounds $|a_i| \leq \normH{P}$ and $|b_j|\leq \normH{Q}$ for the other entries of \eqref{eq det resultant}, we obtain
    \begin{align*}
        |\Res(P,Q)| \ll \normH{P}^{q-1}|P(\xi)| \normH{Q}^p + \normH{P}^q\normH{Q}^{p-1}|Q(\xi)|,
    \end{align*}
    where the implicit constant only depends on $p, q$ and $\xi$.
\end{proof}

The next result, which is also based on inequalities involving resultants, will be used in Section~\ref{section: estimation de omega_n}. It ensures that if $R\in\bZ[X]$ is a ``good'' approximation, in the sense that $R(\xi)$ is very small compared to $\normH{R}$, and if we write $R$ as a product of coprime polynomials $B_1\cdots B_k$, then one of those factors is also a ``good'' approximation, while the product of the others is not. 

\begin{Lem}
     \label{lem: lem general 2}
    Let $m,k$ be positive integers. There exists a constant $c>0$ depending on $m$ and $\xi$ only, with the following property. Let $B_1,\dots,B_k\in\bZ[X]$ be non constant, pairwise coprime polynomials, and suppose that $R:= B_1\cdots B_k$ has degree at most $m$. Then, there exists $j\in\{1,\dots,k\}$ such that
    \[
        |B_j(\xi)| \leq c\normH{R}^{m-1}|R(\xi)| \AND \prod_{\substack{i=1 \\ i\neq j}}^{k} |B_i(\xi)| \geq c^{-1}\normH{R}^{-(m-1)}.
    \]
\end{Lem}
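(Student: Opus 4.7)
The two inequalities in the statement are equivalent through the identity $|R(\xi)| = |B_j(\xi)|\prod_{i\neq j}|B_i(\xi)|$, so I would focus on producing $j$ with $\prod_{i\neq j}|B_i(\xi)| \geq c^{-1}\normH{R}^{-(m-1)}$. The case $k=1$ is immediate (empty product), so I assume $k\geq 2$ and \emph{choose $j$ minimizing $|B_j(\xi)|$}. For each $i\neq j$, the polynomials $B_i, B_j$ are non-constant, coprime and have integer coefficients, hence $|\Res(B_i,B_j)|\geq 1$. Lemma~\ref{lem: estimation classique du resultant} (with $d_l := \deg B_l \leq m$) then gives
\[
    1 \leq c_0\normH{B_i}^{d_j - 1}\normH{B_j}^{d_i - 1}\max\big\{\normH{B_i}|B_j(\xi)|,\ \normH{B_j}|B_i(\xi)|\big\}
\]
for some $c_0 = c_0(m,\xi)$. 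One of the two arguments inside the $\max$ is thus $\geq (2c_0)^{-1}\normH{B_i}^{1-d_j}\normH{B_j}^{1-d_i}$, and in either case (the first alternative bounds $|B_j(\xi)|$ from below, hence $|B_i(\xi)|$ by the choice of $j$; the second bounds $|B_i(\xi)|$ directly), setting $h_l := \log\normH{B_l} \geq 0$, I obtain
\[
    -\log|B_i(\xi)| \leq \max\big((d_j - 1)h_i + d_i h_j,\ d_j h_i + (d_i - 1)h_j\big) + O(1),
\]
where implicit constants henceforth depend only on $m$ and $\xi$.

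The bulk of the proof is a careful summation. I would partition $\{1,\dots,k\}\setminus\{j\}$ into $I_+ = \{i : h_i \geq h_j\}$ and $I_-$ its complement, remarking that the difference of the two arguments of the $\max$ equals $h_i - h_j$; consequently the $\max$ selects the \emph{second} argument on $I_+$ and the \emph{first} on $I_-$. Summing over $i\neq j$, invoking Gelfond's Lemma~\eqref{eq: Gelfond's Lemma} in the form $\sum_i h_i = \log\normH{R} + O(1)$ to replace $\sum_{i\neq j}h_i$ by $L - h_j + O(1)$ (with $L := \log\normH{R}$), and using the trivial bound $\sum_{I_+}(h_i - h_j) \leq \sum_{I_+}h_i \leq L - h_j + O(1)$, the resulting expression collapses to
\[
    \sum_{i\neq j}\big(-\log|B_i(\xi)|\big) \leq d_j L + h_j(\deg R - 2d_j) + O(1).
\]
A dichotomy on the sign of $\deg R - 2d_j$ (using $h_j \leq L + O(1)$ when this quantity is non-negative, and $h_j \geq 0$ when it is negative) bounds the right-hand side by $(m-1)L + O(1)$: the non-negative case reduces to $\deg R - d_j \leq m-1$ (valid since $d_j\geq 1$ and $\deg R \leq m$), while the negative case reduces to $d_j \leq m-1$ (valid since $k\geq 2$ forces $\sum_{i\neq j}d_i\geq k-1\geq 1$, hence $d_j \leq \deg R - 1 \leq m-1$). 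Exponentiating yields the sought lower bound on $\prod_{i\neq j}|B_i(\xi)|$, and the equivalent upper bound on $|B_j(\xi)|$ follows by dividing $|R(\xi)|$ by this quantity.

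The one delicate point to anticipate is that the resultant inequality only provides a $\max$ of two terms, without telling us which of the two wins for a given pair $(B_i, B_j)$. The saving observation is that the two candidate upper bounds for $-\log|B_i(\xi)|$ carry the same total $h$-weight $d_i + d_j - 1$ distributed between $h_i$ and $h_j$; this homogeneity lets the partition $I_+\sqcup I_-$ process both alternatives uniformly, and Gelfond's constraint $\sum_i h_i = \log\normH{R} + O(1)$ then converts these pairwise inequalities into the required global estimate. Everything else is routine manipulation.
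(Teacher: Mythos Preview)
Your proof is correct and follows essentially the same strategy as the paper: choose $j$ minimizing $|B_j(\xi)|$, apply the resultant bound to each pair $(B_i,B_j)$, sum the resulting logarithmic inequalities, and conclude via $\max\{d_j,\deg R - d_j\}\leq m-1$. The paper avoids your $I_+/I_-$ partition by observing directly that both alternatives in the $\max$ are bounded by $d_j h_i + d_i h_j$ (using $|B_j(\xi)|\leq |B_i(\xi)|$ once more), which yields the same intermediate bound $d_j L + h_j(\deg R - 2d_j) + O(1)$ in a single step.
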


\begin{proof}
     If $k=1$ this is trivial. We now suppose that $k\geq 2$ and we write $d_i = \deg(B_i)$ for $i=1,\dots, k$. By hypothesis, we have $\deg(R) = d_1+\cdots + d_k \leq m$. Note that
    \begin{align}
        \label{eq proof: log H(P) = sum log H(R_i)}
         |R(\xi)| = \prod_{i=1}^{k} |B_i(\xi)| \AND  \normH{R} \asymp \prod_{i=1}^{k} \normH{B_i},
    \end{align}
    the second inequality coming from Gelfond's (the implicit constants depending only on $m$). Let $j\in\{1,\dots,k\}$ be such that $|B_j(\xi)|$ is minimal and fix $i\in\{1,\dots,k\}$ with $i\neq j$. Since $B_i$ and $B_j$ are coprime, their resultant $\Res(B_i,B_j)$ is a non-zero integer. Using Lemma~\ref{lem: estimation classique du resultant}, we find
    \begin{align*}
        1 \leq |\Res(B_i,B_j)| & \ll \normH{B_i}^{d_j-1} \normH{B_j}^{d_i-1}\big( \normH{B_j}|B_i(\xi)| + \normH{B_i}|B_j(\xi)|\big) \\
        & \ll \normH{B_i}^{d_j} \normH{B_j}^{d_i}|B_i(\xi)|,
    \end{align*}
    the implicit constant depending only on $\xi$ and $m$, hence
    \[
        -\log |B_i(\xi)| \leq d_j\log \normH{B_i} + d_i\log \normH{B_j} + \GrO(1).
    \]
    On the other hand, by summing the above inequalities for $i\neq j$ (and since $\max\{m-d_j,d_j\} \leq m-1$), and by using \eqref{eq proof: log H(P) = sum log H(R_i)}, we obtain
    \begin{align*}
        \sum_{\substack{i=1 \\ i\neq j}}^{k} -\log |B_i(\xi)|
        & \leq d_j\sum_{\substack{i=1 \\ i\neq j}}^{k} \log \normH{B_i} +(m-d_j) \log \normH{B_j} + \GrO(1) \\
        & \leq (m-1)\log \normH{R} + \GrO(1).
    \end{align*}
    We easily deduce that
    \begin{align*}
        \prod_{\substack{i=1 \\ i\neq j}}^{k} |B_i(\xi)| \gg \normH{R}^{-(m-1)} \AND
         |R(\xi)| = \prod_{i=1}^{k}|B_i(\xi)| \gg |B_j(\xi)| \normH{R}^{- (m-1)}.
    \end{align*}
\end{proof}

\section{A sequence of irreducible polynomials}
\label{section: sequence des Q_i}

As explained in the introduction, to get the upper bound $\homega_n(\xi) \leq 2n-1$, the strategy of Davenport and Schmidt \cite{davenport1969approximation} consists in considering the resultant $\Res(P,Q)$ of two ``good'' polynomial approximations $P,Q\in \bZ[X]_{\leq n}$. To ensure that $\Res(P,Q)$ does not vanish, they need a polynomial $P$ which is irreducible (for it is then easy to find $Q$ so that $P$ and $Q$ are coprime). The same difficulty appears in \cite{bugeaudSchleischitz2016Uniform}. Similarly, we will not work directly with a sequence of minimal polynomials. Instead, we will considerer the largest irreducible factors of the minimal polynomials. Now, let $n,d$ be integers with
\[
    2\leq d < 1+ \frac{n}{2}.
\]
In this section, we assume that the transcendental real number $\xi$ satisfies $\homega_n(\xi) > 2n -d$ and we fix a real number $\homega$ (arbitrarily close to $\homega_n(\xi)$) such that
\begin{align}
    \label{eq: notation homega}
    \homega_n(\xi) > \homega > 2n -d.
\end{align}

We denote by $(P_i)_{i\geq 0}$ a sequence of minimal polynomials associated to $n$ and $\xi$. Our goal is to prove the existence of a sequence $(Q_i)_{i\geq 0}$ as below. 

\begin{Prop}
    \label{prop: existence Q_i}
    Suppose that \eqref{eq: notation homega} holds. Then, there exist a sequence $(Q_i)_{i\geq 0}$ of pairwise distinct polynomials in $\bZ[X]_{\leq n}$ and an index $j_0\geq 0$ with the following properties. The sequence $(\normH{Q_i})_{i\geq 0}$ is bounded below by $2$, unbounded and non-decreasing, and for any $i\geq 0$
    \begin{enumerate}[label=(\roman*)]
        \item \label{enum: property (Q_i) item 1} $Q_i$ is irreducible (over $\bZ$) and has degree at least $n-d+2$;
        \item $Q_i$ divides $P_j$ for some index $j\geq j_0$ (not necessarily unique), and for each $j\geq j_0$ there exists $k\geq 0$ such that $Q_k$ divides $P_j$;
        \item \label{enum: property (Q_i) item 6} $|Q_i(\xi)| = \normH{Q_i}^{-\omega(Q_i)}\leq \normH{Q_i}^{-\homega}$, and we further have 
            \begin{align}
                \label{eq: omega_n donné par les Q_i}
                \omega_n(\xi) =  \limsup_{k\rightarrow \infty} \omega(Q_k) \AND \liminf_{k\rightarrow \infty} \omega(Q_k) \geq \homega_n(\xi).
            \end{align}
        \item \label{enum: H(P_j) controle par H(Q_i)} if $Q_i$ divides a minimal polynomial $P_j$ with $j\geq j_0$, then
            \begin{equation}
                \label{eq: premiere estimation Pj vs Qi}
                \normH{P_j} \leq \normH{Q_i}^{1+\theta_i}, \quad \textrm{where } \theta_i = \frac{\omega(Q_i)-2n+d}{n-2d+3};
            \end{equation}
        \item \label{enum: H(Q_i+1) controle par H(Q_i)} we have
            \begin{equation}
                \label{eq: estimation Q_i+1 par Q_i}
                \normH{Q_{i+1}}^\tau \leq \normH{Q_i} \quad \textrm{where } \tau = \frac{\homega\big(\homega -n-d+3\big)}{\omega_n(\xi)\big(\omega_n(\xi)-n-d+3\big)},
            \end{equation}
            with the convention $\tau= 0$ if $\omega_n(\xi) = \infty$;
    \end{enumerate}
\end{Prop}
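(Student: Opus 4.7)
My strategy is to construct $Q_i$ as the irreducible polynomial underlying a carefully chosen coprime factor of a minimal polynomial $P_j$. For each sufficiently large $j$, write $P_j=c_j\prod_{i=1}^{s_j}B_{j,i}^{e_{j,i}}$ as a product of pairwise coprime powers of distinct irreducibles $B_{j,i}\in\bZ[X]$, and apply Lemma~\ref{lem: lem general 2} with $m=\deg P_j\leq n$ to these coprime factors. This provides a distinguished factor $F_j:=B_{j,i_*}^{e_{j,i_*}}$ satisfying $|F_j(\xi)|\leq c\normH{P_j}^{n-1}|P_j(\xi)|$, and I define $Q(P_j):=B_{j,i_*}$, taking $(Q_i)_{i\geq 0}$ as the sequence of distinct values of $Q(P_j)$ for $j\geq j_0$, arranged in non-decreasing order of height.

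The engine driving the analysis is the minimality of $P_j$. Since $\omega(P_j)\geq\homega$ for $j$ large by~\eqref{eq: encadrement omega(P_i)}, we get $|F_j(\xi)|\leq c\normH{P_j}^{n-1-\homega}<|P_{j-1}(\xi)|$ for $\normH{P_j}$ large enough, using~\eqref{eq: H(P_i+1) controle par H(P_i)}. The third minimality property then forces $\normH{F_j}\geq\normH{P_j}$, while Gelfond's Lemma~\eqref{eq: Gelfond's Lemma} gives $\normH{F_j}\leq e^n\normH{P_j}$, so $\normH{F_j}\asymp\normH{P_j}$. Consequently $C_j:=P_j/F_j$ has bounded height $\normH{C_j}=O(1)$ and lies in a finite subset of $\bZ[X]_{\leq n}$; Gelfond further yields $\normH{Q(P_j)}^{e_{j,i_*}}\asymp\normH{P_j}$; and a direct computation using $|P_j(\xi)|\asymp|Q(P_j)(\xi)|^{e_{j,i_*}}$ shows $\omega(Q(P_j))=\omega(P_j)+o(1)$ as $\normH{P_j}\to\infty$, establishing~(iii) and the limit identities~\eqref{eq: omega_n donné par les Q_i}.

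The main technical obstacle is the degree bound $\deg Q(P_j)\geq n-d+2$ in~(i). I would argue by contradiction, splitting into cases that must occur infinitely often if the bound fails: $\deg P_j\leq n-1$; $e_{j,i_*}\geq 2$ (forcing $\deg Q(P_j)\leq\lfloor n/2\rfloor$); or $e_{j,i_*}=1$, $\deg P_j=n$ and $\deg C_j\geq d-1$. In the last and most delicate case, finiteness of the possible values for $C_j$ allows by pigeonhole the extraction of a fixed polynomial $C_*$ of degree $\geq d-1$ dividing infinitely many $P_j$; the quotients $\tilde P_j:=P_j/C_*\in\bZ[X]_{\leq n-d+1}$ then satisfy $\omega(\tilde P_j)\geq\homega-o(1)$, and a uniform density argument—combining the pigeonholed subsequence with the density of $(P_j)$ at every scale supplied by $\homega_n(\xi)>\homega$—yields $\homega_{n-d+1}(\xi)\geq\homega$, contradicting the Bugeaud-Schleischitz bound~\eqref{eq: intro Bugeaud-Schleischitz} applied in degree $n-d+1$ since $d\geq 2$. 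The first two cases are handled analogously using~\eqref{eq: estimation Davenport and Schmidt} in degree $n-1$ and Brownawell's version of Gel'fond's criterion in degree $\lfloor n/2\rfloor$ respectively (edge cases near $d=n/2$ requiring slight refinements). Properties~(iv) and~(v) are then routine: (iv) follows from the resultant inequality (Lemma~\ref{lem: estimation classique du resultant}) applied to the coprime pair $(Q_i,P_j/Q_i^{e_{j,i_*}})$, whose degrees $\deg Q_i\geq n-d+2$ and $\deg(P_j/Q_i^{e_{j,i_*}})\leq d-2$ combined with the bound $\omega(Q_i)\geq\homega$ yield the exponent $\theta_i=(\omega(Q_i)-2n+d)/(n-2d+3)$; (v) is transferred from~\eqref{eq: H(P_i+1) controle par H(P_i)} via the height equivalence $\normH{Q_i}^{e_{j,i_*}}\asymp\normH{P_j}$, the specific form of $\tau$ in~\eqref{eq: estimation Q_i+1 par Q_i} reflecting the propagation of $\homega/\omega_n(\xi)$ through the multiplicities and degrees involved.
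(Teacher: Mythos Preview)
Your argument has a genuine gap at its very first substantive step. You claim that
\[
|F_j(\xi)|\leq c\,\normH{P_j}^{n-1-\homega}<|P_{j-1}(\xi)|
\]
``using~\eqref{eq: H(P_i+1) controle par H(P_i)}''. But \eqref{eq: H(P_i+1) controle par H(P_i)} gives the \emph{upper} bound $|P_{j-1}(\xi)|\leq\normH{P_j}^{-\homega}$, not a lower bound; there is no reason the displayed inequality should hold. Indeed the ratio $|P_{j-1}(\xi)|/|P_j(\xi)|$ need not exceed $c\,\normH{P_j}^{n-1}$, so minimality does not force $\normH{F_j}\geq\normH{P_j}$. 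Once $\normH{F_j}\asymp\normH{P_j}$ fails, so does $\normH{C_j}=O(1)$; your pigeonhole argument for the degree bound, your claim $\omega(Q(P_j))=\omega(P_j)+o(1)$, and the transfer argument for~(v) all collapse with it. (The degree-bound case analysis is also independently incomplete: having infinitely many $P_j$ of degree $\leq n-1$ or with a common factor $C_*$ bounds $\omega_{n-1}$ or $\omega_{n-d+1}$ from below, not the uniform exponents $\homega_{n-1}$ or $\homega_{n-d+1}$ that you need to contradict.)

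The paper avoids all of this by a direct route. The key is Lemma~\ref{lemme: omega(R) petit degre}: any nonzero $R\in\bZ[X]_{\leq n-d+1}$ satisfies $|R(\xi)|\gg\normH{R}^{-(2n-d)}$, proved by taking the resultant of $R$ with a good degree-$n$ approximation of smaller height. Multiplying this over all irreducible factors shows that if every factor of $P_j$ had degree $\leq n-d+1$ then $\omega(P_j)\leq 2n-d<\homega$, a contradiction; this gives~(i) immediately, with multiplicity one since $d<1+n/2$. The cofactor $S$ then has degree $\leq d-2$, and the same lemma bounds $|S(\xi)|\gg\normH{S}^{-(n+d-3)}$, yielding inequality~\eqref{eq lem: existence facteur irr grand eq 2}. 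Properties~(iii),~(iv) and~(v) are read off from this single inequality combined with Gelfond's lemma, with no appeal to minimality of $F_j$, no pigeonhole, and no lower-degree uniform-exponent bounds.
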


The above proposition is essentially a consequence of Lemma~\ref{lem: existence facteur irreductible grand} below. Assertion \ref{enum: property (Q_i) item 6} ensures that the polynomials $Q_i$ are quite good approximations, and they can be used to compute the exponent of best approximation $\omega_n(\xi)$. Estimate \eqref{eq: estimation Q_i+1 par Q_i} is the analog of the second inequality of \eqref{eq: H(P_i+1) controle par H(P_i)} but is way more difficult to prove. The main reason behind this difficulty is that there may be many polynomials $P\in\bZ[X]_{\leq n}$ with $\normH{Q_i} < \normH{P} < \normH{Q_{i+1}}$ and $|P(\xi)| < Q_i(\xi)$
\medskip

In order to prove Proposition~\ref{prop: existence Q_i}, we need the two technical lemmas below. Essentially, they will be used to prove that the factors of $P_i$ of small degree are bad approximations. This will lead to the existence of a factor of large degree which is necessarily a rather good approximation.

\begin{Lem}
   \label{lemme: omega(R) petit degre}
   Suppose that \eqref{eq: notation homega} holds. Then, there exists a constant $c\in(0,1)$ depending only on $\xi$ and $n$ such that for any non-zero polynomial $R\in \bZ[X]_{\leq n-d+1}$ we have
   \begin{equation}
        \label{lemme; eq1 : omega(R) petit degree}
        |R(\xi)|  \geq c\normH{R}^{-(n + \deg(R) - 1)} \geq c\normH{R}^{-(2n-d)}.
   \end{equation}
   In particular \eqref{lemme; eq1 : omega(R) petit degree} holds for any $R\in\bZ[X]_{\leq d-2}$.
\end{Lem}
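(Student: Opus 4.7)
Since $\deg R \leq n-d+1$ immediately gives $n + \deg R - 1 \leq 2n-d$, the second inequality follows from the first, on which we focus. The case $\deg R = 0$ is trivial: $|R(\xi)| = |R| = \normH{R} \geq 1 \geq \normH{R}^{-(n-1)}$. Setting $r := \deg R \geq 1$, we may also assume $\normH{R}$ is sufficiently large, absorbing the finitely many remaining low-height $R$ into the constant $c$.

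The guiding idea is to pair $R$ with a minimal polynomial of comparable height and apply Lemma~\ref{lem: estimation classique du resultant}. Let $i$ be the largest index with $\normH{P_i} \leq \normH{R}$, so $\normH{P_{i+1}} > \normH{R}$; for $\normH{R}$ large, \eqref{eq: H(P_i+1) controle par H(P_i)} yields $|P_i(\xi)| \leq \normH{P_{i+1}}^{-\homega} < \normH{R}^{-\homega}$. Assuming $R$ and $P_i$ are coprime, substituting $\deg P_i \leq n$, $\normH{P_i} \leq \normH{R}$ and the above bound on $|P_i(\xi)|$ in Lemma~\ref{lem: estimation classique du resultant} produces
\[
    1 \leq |\Res(R, P_i)| \leq c_1 \normH{R}^{n+r-2}\,\max\bigl\{\normH{R}^{1-\homega},\ \normH{R}\,|R(\xi)|\bigr\}.
\]
Since $\homega > 2n-d \geq n+r-1$, the first argument of the max forces $1 \leq c_1 \normH{R}^{n+r-1-\homega}$, which is impossible for $\normH{R}$ large. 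Hence $\normH{R}\,|R(\xi)|$ must dominate the max, giving at once $|R(\xi)| \geq c_1^{-1}\normH{R}^{-(n+r-1)}$, as required.

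Two cases remain. If $R$ is \emph{reducible}, write $R = A\cdot B$ with $A, B \in \bZ[X]$ non-constant of degrees $< r$; applying the lemma inductively to $A$ and $B$ (both of degree $\leq r-1 \leq n-d+1$) and combining their lower bounds via $|R(\xi)| = |A(\xi)||B(\xi)|$ and Gelfond's lemma \eqref{eq: Gelfond's Lemma} (which relates $\normH{A}\normH{B}$ to $\normH{R}$) yields $|R(\xi)| \gg \normH{R}^{-(n+r-1)}$ after a short case analysis on the balance between $\deg A$ and $\deg B$. If $R$ is \emph{irreducible} but $R \mid P_i$, the resultant vanishes and the preceding argument breaks down. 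One then replaces $P_i$ by a nearby minimal polynomial $P_j$ of comparable height not divisible by $R$, whose existence may be sought via Lemma~\ref{lem: I infini}: at infinitely many indices, $P_{j-1}, P_j, P_{j+1}$ are linearly independent, so they cannot all three lie in the proper subspace $R \cdot \bZ[X]_{\leq n-r} \subset \bR[X]_{\leq n}$ provided $r \geq 2$.

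\textbf{Main obstacle.} The hard step is precisely the configuration $R$ irreducible with $R \mid P_i$. In that case $P_i = RQ$ and minimality of $P_i$ applied to $R$ forces $|Q(\xi)| \leq 1$, so $|R(\xi)|$ is essentially $|P_i(\xi)|$, which in principle may be much smaller than $\normH{R}^{-(n+r-1)}$ if the gap $\normH{P_{i+1}}/\normH{P_i}$ is large. Resolving this requires controlling how many consecutive minimal polynomials of height $\asymp \normH{R}$ can simultaneously be divisible by $R$; too many such divisibilities would trap the good approximations inside the proper subspace $R \cdot \bZ[X]_{\leq n-r}$ and eventually contradict $\homega_n(\xi) > 2n-d$ via a subspace/height argument (foreshadowing the generalized resultant mechanism sketched in the introduction). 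The price of this step is the restriction $\deg R \leq n-d+1$: beyond this range, too few polynomial ``translates'' $R, XR, \ldots, X^{n-r}R$ are available to close the contradiction.
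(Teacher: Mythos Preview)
Your overall plan is right, and the reducible and constant cases are fine. The gap is in the irreducible case, where you create an unnecessary obstacle by insisting on comparing $R$ with a \emph{minimal} polynomial $P_i$ of height $\leq \normH{R}$. As you yourself note, nothing prevents $R\mid P_i$, and your proposed workaround via Lemma~\ref{lem: I infini} and a subspace argument is only sketched; in particular, for $r=1$ the subspace $R\cdot\bZ[X]_{\leq n-1}$ has codimension~$1$ and three linearly independent $P_{j-1},P_j,P_{j+1}$ can very well all lie in it, so the argument as stated does not close.

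The fix is much simpler, and it is the paper's approach: do not use a minimal polynomial at all. Take $H=e^{-n}\normH{R}$ and use the definition of $\homega_n(\xi)>\homega$ directly to obtain a nonzero $P\in\bZ[X]_{\leq n}$ with $\normH{P}\leq H$ and $|P(\xi)|\leq H^{-\homega}$. Gelfond's lemma~\eqref{eq: Gelfond's Lemma} then gives automatically that $R\nmid P$: if $P=RS$ then $\normH{P}>e^{-n}\normH{R}\normH{S}\geq e^{-n}\normH{R}=H$, contradiction. Since $R$ is irreducible, coprimality follows for free, and your resultant computation goes through verbatim with $P$ in place of $P_i$. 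The entire ``main obstacle'' paragraph disappears.
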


\begin{proof}
    If $R$ is constant we have $|R(\xi)| = \normH{R}$ and the result is trivial. Now, suppose that $R$ is irreducible and not constant. We adapt the arguments of Davenport and Schmidt \cite[§5--6]{davenport1969approximation}. Set $H = e^{-n}\normH{R}$. By definition of $\homega_n(\xi)$ and $\homega$, if $H$ is sufficiently large, there exists a non-zero $P\in\bZ[X]_{\leq n}$ such that
    \[
        \normH{P} \leq H \AND |P(\xi)| \leq H^{-\homega}.
    \]
    By \eqref{eq: Gelfond's Lemma}, the (irreducible) polynomial $R$ is not a factor of $P$, they are thus coprime polynomials. Their resultant is a non-zero integer, and using Lemma~\ref{lem: estimation classique du resultant}, we obtain
    \begin{align*}
        1 & \ll \normH{P}^{\deg(R)-1}\normH{R}^{n}|P(\xi)| + \normH{P}^{\deg(R)}\normH{R}^{n-1}|R(\xi)| \\
          & \ll H^{n+\deg(R)-1-\homega} + H^{n+\deg(R)-1}|R(\xi)|.
    \end{align*}
    Since $\homega > 2n-d$ and $\deg(R) \leq n-d+1$, the first term tends to $0$ as $H$ tends to infinity. Hence $ 1 \ll H^{n+\deg(R)-1}|R(\xi)|$, which implies \eqref{lemme; eq1 : omega(R) petit degree}.

    \medskip

    If $R$ is not irreducible, we write $R = \prod_{i=1}^{s}R_i$ with integer $s\geq 1$ and $R_1,\dots,R_s\in\bZ[X]$ irreducible of degree $\leq \deg(R)$ (possibly constant). Combining $\normH{R} \asymp \prod_{i=1}^{s}\normH{R_i}$ together with  \eqref{lemme; eq1 : omega(R) petit degree} applied with the irreducible polynomials $R_i$, we find
    \begin{align*}
        |R(\xi)| = \prod_{i=1}^{s}|R_i(\xi)| & \gg \prod_{i=1}^{s}\normH{R_i}^{-(n + \deg(R) - 1)} \gg \normH{R}^{-(n + \deg(R) - 1)}.
    \end{align*}
    Finally, the last assertion comes from the fact that $d-1\leq n+d-1$ (since $d\leq 1+n/2$).
\end{proof}

\begin{Lem}
    \label{lem: existence facteur irreductible grand}
    Suppose that \eqref{eq: notation homega} holds. There exist $i_0\geq 0$ and a constant $c>0$ such that for each $i\geq i_0$ the polynomial $P_i$ has a unique irreducible factor $\tP_i\in\bZ[X]$ of degree $\geq n-d+2$ and positive leading coefficient. It satisfies
    \begin{equation}
        \label{eq lem: existence facteur irr grand eq 2}
        |P_i(\xi)| \normH{P_i}^{n+d-3} \geq c|\tP_i(\xi)| \normH{\tP_i}^{n+d-3},
    \end{equation}
    moreover $\big(\normH{\tP_i}\big)_{i\geq i_0}$ tends to infinity and as $i$ tends to infinity. For each $i$ large enough we have $\normH{\tP_i}>1$, and writing $|\tP_i(\xi)| = \normH{\tP_i}^{-\omega(\tP_i)}$, we furthermore have
    \begin{equation}
        \label{eq lem: existence facteur irr grand eq 1}
        \omega_n(\xi) = \limsup_{i\rightarrow\infty} \omega(\tP_i) \AND \liminf_{i\rightarrow\infty} \omega(\tP_i)  \geq \homega_n(\xi).
    \end{equation}
\end{Lem}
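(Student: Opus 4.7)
The plan is to establish the four claims (uniqueness, existence, height bound, $\omega$-asymptotics) in succession, using Lemma~\ref{lemme: omega(R) petit degre} and Gelfond's Lemma as the main tools.

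\textbf{Uniqueness and existence of $\tP_i$.} If $P_i$ had two distinct irreducible factors of degree $\geq n-d+2$, their product would have degree $\geq 2(n-d+2) > n$ by the hypothesis $d < 1+n/2$, contradicting $\deg P_i \leq n$. Hence such a factor is unique up to sign, and I take $\tP_i$ to have positive leading coefficient. To prove its existence (for $i$ large), I argue by contradiction: assume every irreducible factor $B_l$ of the factorization $P_i = B_1\cdots B_k$ satisfies $\deg B_l \leq n-d+1$. Lemma~\ref{lemme: omega(R) petit degre} yields $|B_l(\xi)| \geq c\normH{B_l}^{-(n+\deg B_l - 1)} \geq c\normH{B_l}^{-(2n-d)}$; multiplying over $l$ and using $\prod_l \normH{B_l} \leq e^n \normH{P_i}$ from \eqref{eq: Gelfond's Lemma}, I obtain $|P_i(\xi)| \geq c'\normH{P_i}^{-(2n-d)}$. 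Since for $i$ large $|P_i(\xi)| \leq \normH{P_i}^{-\homega}$ with $\homega > 2n-d$, this forces $\normH{P_i}$ to be bounded, contradicting $\normH{P_i} \to \infty$. So $\tP_i$ exists for all $i \geq i_0$.

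\textbf{The height estimate \eqref{eq lem: existence facteur irr grand eq 2}.} Write $P_i = \tP_i \cdot C_i$ with $C_i \in \bZ[X]$, so that $\deg C_i \leq n - (n-d+2) = d-2$. The condition $d < 1+n/2$ guarantees $d-2 \leq n-d+1$, so Lemma~\ref{lemme: omega(R) petit degre} applies to $C_i$ and gives $|C_i(\xi)| \geq c\normH{C_i}^{-(n+d-3)}$. Combining with $\normH{C_i} \leq e^n \normH{P_i}/\normH{\tP_i}$ from \eqref{eq: Gelfond's Lemma}, this yields $|C_i(\xi)| \geq c''(\normH{\tP_i}/\normH{P_i})^{n+d-3}$; multiplying by $|\tP_i(\xi)|$ produces \eqref{eq lem: existence facteur irr grand eq 2}.

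\textbf{Divergence of $\normH{\tP_i}$ and $\omega$-estimates.} If $\normH{\tP_{i_k}}$ were bounded along some subsequence, $\tP_{i_k}$ would lie in a finite subset of $\bZ[X]_{\leq n}$, so $|\tP_{i_k}(\xi)|$ would be bounded below by a positive constant; \eqref{eq lem: existence facteur irr grand eq 2} combined with $|P_{i_k}(\xi)| \leq \normH{P_{i_k}}^{-\homega}$ and $\homega > 2n-d > n+d-3$ would then bound $\normH{P_{i_k}}$, contradicting $\normH{P_i} \to \infty$. Taking logarithms of \eqref{eq lem: existence facteur irr grand eq 2} and using $\log \normH{\tP_i} \leq \log \normH{P_i} + O(1)$ from \eqref{eq: Gelfond's Lemma}, I obtain
\begin{align*}
    \omega(\tP_i) \geq (n+d-3) + \bigl(\omega(P_i) - (n+d-3)\bigr)\frac{\log \normH{P_i}}{\log \normH{\tP_i}} - O\!\left(\frac{1}{\log \normH{\tP_i}}\right).
\end{align*}
Since $\log \normH{P_i}/\log \normH{\tP_i} \geq 1 - O(1/\log \normH{\tP_i})$ and $\omega(P_i) \geq \homega > n+d-3$ for $i$ large, this gives $\omega(\tP_i) \geq \omega(P_i) - o(1)$ as $i \to \infty$. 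Together with \eqref{eq: encadrement omega(P_i)} and the trivial inequality $\limsup \omega(\tP_i) \leq \omega_n(\xi)$ (valid since $\tP_i \in \bZ[X]_{\leq n}$ with $\normH{\tP_i} \to \infty$), this yields \eqref{eq lem: existence facteur irr grand eq 1}.

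The main delicate step is the final $\omega$-asymptotic: when $\omega_n(\xi) = \infty$ or $\normH{P_i}/\normH{\tP_i}$ is large, I must verify carefully that the correction term $\bigl(\omega(P_i) - (n+d-3)\bigr)\bigl(1 - \log\normH{P_i}/\log\normH{\tP_i}\bigr)$ is genuinely $o(1)$ along the subsequences realizing the limsup and liminf.
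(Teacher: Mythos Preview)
Your proof follows essentially the same approach as the paper's: uniqueness via the degree constraint, existence via Lemma~\ref{lemme: omega(R) petit degre} applied to each irreducible factor, the height estimate~\eqref{eq lem: existence facteur irr grand eq 2} via Lemma~\ref{lemme: omega(R) petit degre} applied to the cofactor $C_i$, and the $\omega$-asymptotics by taking logarithms.

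The one technical point you flag at the end is genuine: your additive conclusion $\omega(\tP_i) \geq \omega(P_i) - o(1)$ does not follow directly from your displayed inequality when $\omega(P_i)$ is unbounded (i.e.\ when $\omega_n(\xi)=\infty$), since the correction term is really $O(\omega(P_i)/\log\normH{\tP_i})$. The paper sidesteps this by dividing by $\log\normH{P_i}$ instead of $\log\normH{\tP_i}$ and using $\log\normH{\tP_i}\leq \log\normH{P_i}+O(1)$, obtaining the \emph{multiplicative} form $\omega(\tP_i)\geq \omega(P_i)(1-o(1))$ with $o(1)=O(1/\log\normH{P_i})$ independent of $\omega(P_i)$; both parts of~\eqref{eq lem: existence facteur irr grand eq 1} then follow at once from~\eqref{eq: encadrement omega(P_i)}. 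Equivalently, from your own inequality: for any $\epsilon>0$ the ratio $\log\normH{P_i}/\log\normH{\tP_i}$ is eventually $\geq 1-\epsilon$, giving $\omega(\tP_i)\geq \epsilon(n+d-3)+(1-\epsilon)\omega(P_i)-o(1)$; take liminf or limsup and let $\epsilon\to 0$ (the case $\omega_n(\xi)=\infty$ is handled since $\omega(\tP_i)\geq \tfrac12\omega(P_i)-O(1)$ eventually).
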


\begin{proof}
    First, note that since $d < 1+n/2$, if we decompose $P_i$ as a product of irreducibles, there is at most one factor of degree $\geq n-d+2$. Fix $i\geq 0$ large enough so that $\omega(P_i) \geq \homega$, and write
     \[
        P:=P_i = \prod_{k=1}^s R_k
     \]
     where $R_1,\dots,R_s\in\bZ[X]$ are irreducible polynomials (and $s$ is a positive integer). Suppose that $\deg(R_k) \leq n-d+1$ for each $k=1,\dots,s$. Then, by Lemma~\ref{lemme: omega(R) petit degre} together with $\normH{P} \asymp \prod_k \normH{R_k}$, we find
    \begin{align*}
        \normH{P}^{-\homega}\geq |P(\xi)| = \prod_{k=1}^{s} |R_k(\xi)| \gg \prod_{k=1}^{s} \normH{R_k}^{-(2n-d)} \asymp \normH{P}^{-(2n-d)}.
    \end{align*}
    This is impossible if $i$ is sufficiently large since $\homega > 2n-d$. Therefore, if $i$ is large enough, one of the factors $R_k$ has degree at least $n-d+2$. Without loss of generality, we may suppose that it is $R:=R_1$.
    Write $S:= \prod_{k = 2}^{s} R_k$, so that $P=RS$. We have $\deg(S) \leq d-2$, and \eqref{lemme; eq1 : omega(R) petit degree} of Lemma~\ref{lemme: omega(R) petit degre} yields 
    \begin{align*}
        |S(\xi)| \gg  \normH{S}^{-(n+d-3)}.
    \end{align*}
    Together with $\normH{P}\asymp \normH{R}\normH{S}$, it leads to
    \begin{align*}
        |P(\xi)| = |R(\xi)||S(\xi)| \gg |R(\xi)|\normH{S}^{-(n+d-3)} \asymp |R(\xi)|\normH{R}^{n+d-3}\normH{P}^{-(n+d-3)},
    \end{align*}
    and \eqref{eq lem: existence facteur irr grand eq 2} follows easily by setting $\tP_i:=R$. The rest of the proof is based solely on \eqref{eq lem: existence facteur irr grand eq 2} and the inequality $\normH{\tP_i} \ll \normH{P_i}$. Note that $|P_i(\xi)|\normH{P_i}^{n+d-3} \ll \normH{P_i}^{n+d-3-\homega}$ tends to $0$ as $i$ tends to infinity (using $d < 1+n/2$ together with $\omega(P_i) > 2n-d$). We deduce that $|\tP_i(\xi)|\normH{\tP_i}^{n+d-3}$ also tends to $0$ as $i$ tends to infinity, which is possible only if $\normH{\tP_i}$ tends to infinity. In particular, if $i$ is large enough we must have $\normH{\tP_i} > 1$. Writing $|\tP_i(\xi)| = \normH{\tP_i}^{-\omega(\tP_i)}$, we also have $\omega(\tP_i) > n+d-3$. Now, using $\normH{\tP_i}\ll \normH{P_i}$, and taking the $\log$ of the two sides of \eqref{eq lem: existence facteur irr grand eq 2}, we get
    \begin{align*}
        \big(\omega(P_i)-(n+d-3)\big)\log \normH{P_i} & \leq \big(\omega(\tP_i)-(n+d-3)\big)\log \normH{\tP_i} + \GrO(1) \\
        & \leq \big(\omega(\tP_i)-(n+d-3)\big)\big(\log \normH{P_i}+\GrO(1)\big) + \GrO(1).
    \end{align*}
    By dividing by $\log \normH{P_i}$ and by simplifying, we deduce that $\omega(\tP_i) \geq \omega(P_i)\big (1 - o(1)\big)$ and \eqref{eq lem: existence facteur irr grand eq 1} follows easily from \eqref{eq: encadrement omega(P_i)}.
\end{proof}

\begin{proof}[Proof of Proposition~\ref{prop: existence Q_i}]
    Let $i_0\geq 0$ and $(\tP_i)_{i\geq i_0}$ given by Lemma~\ref{lem: existence facteur irreductible grand}. Let $(Q_i)_{i\geq 0}$ be the (infinite) sequence of factors $(\tP_j)_{j\geq i_0}$ reordered by increasing height, without repetition. By Lemma~\ref{lem: existence facteur irreductible grand}, we may assume $i_0$ large enough so that $\normH{Q_i} > 1$ for each $i$, as well as $|Q_i(\xi)| \leq \normH{Q_i}^{-\homega}$. This sequence clearly satisfies the first assertions \ref{enum: property (Q_i) item 1} to \ref{enum: property (Q_i) item 6}, the third one coming  \eqref{eq lem: existence facteur irr grand eq 1} together with \eqref{eq: encadrement omega(P_i)}.

    \medskip

    Now, let $i\geq 0$ and let $j\geq i_0$ be an index such that $Q_i$ divides $P_j$. Since $\normH{Q_i}\ll \normH{P_j}$ by Gelfond's Lemma, the index $j$ tends to infinity as $i$ tends to infinity. Then, estimate \eqref{eq lem: existence facteur irr grand eq 2} can be rewritten as
    \begin{align}
        \label{eq proof: inter 1 pour controler H(P_j)}
         |P_j(\xi)|^{-1}\normH{P_j}^{-n-d+3} \ll |Q_i(\xi)|^{-1}\normH{Q_i}^{-n-d+3} = \normH{Q_i}^{\omega(Q_i)-n-d+3}.
    \end{align}
    Using $|P_j(\xi)|^{-1} \gg \normH{P_j}^{\homega}$ and $\homega > 2n-d$, we get, for each large enough $i$,
    \begin{align*}
        \normH{P_j}^{n-2d+3} \leq \normH{Q_i}^{\omega(Q_i)-n-d+3},
    \end{align*}
    which is equivalent to \eqref{eq: premiere estimation Pj vs Qi}. So, assertion~\ref{enum: H(P_j) controle par H(Q_i)} holds assuming $i_0$ large enough. 

    \medskip

    It remains to prove assertion~\ref{enum: H(Q_i+1) controle par H(Q_i)}. Note that this is trivial if $\omega_n(\xi) = \infty$. Let us assume that $\omega_n(\xi) < \infty$ and fix a small $\ee > 0$ to be chosen later. For each pair $(i,j)$ as above with $j \geq i_0$ large enough as a function of $\ee$, we have $\omega(P_j) > \homega_n(\xi)-\ee/2$ and $\omega(Q_i) < \omega_n(\xi)+\ee/2$, and thus \eqref{eq proof: inter 1 pour controler H(P_j)} yields
    \begin{align*}
        \normH{P_j}^{\homega_n(\xi)-\ee-n-d+3} \leq \normH{Q_i}^{\omega_n(\xi)+\ee-n-d+3},
    \end{align*}
    for each $i \geq 0$ and each $j\geq i_0$ such that $Q_i$ divides $P_j$. We define $k$ as the largest index such that
    \begin{align*}
        \normH{P_k} \leq \normH{Q_i}^{\theta(\ee)},\quad \textrm{where } \theta(\ee) = \frac{\omega_n(\xi)+\ee-n-d+3}{\homega_n(\xi)-\ee-n-d+3}.
    \end{align*}
    Since $\normH{P_j} \leq \normH{Q_i}^{\theta(\ee)}$, by maximality of $k$ we have $i_0\leq j\leq k$. Let $\ell$ be such that $Q_\ell$ divides $P_{k+1}$. We find
    \begin{align*}
        \normH{P_k} \leq \normH{Q_i}^{\theta(\ee)} < \normH{P_{k+1}} \leq \normH{Q_\ell}^{\theta(\ee)},
    \end{align*}
    and therefore $\ell \geq i+1$. On the other hand, since by Gelfond's Lemma we have $\normH{Q_\ell} \ll \normH{P_{k+1}}$, we deduce from \eqref{eq: H(P_i+1) controle par H(P_i)} that
    \begin{align*}
        \normH{Q_{i+1}} \leq \normH{Q_\ell} \ll \normH{P_{k+1}} \ll \normH{P_k}^{\omega_n(\xi)/\homega}\leq \normH{Q_i}^{\omega_n(\xi)\theta(\ee) /\homega}.
    \end{align*}
    We now choose $\ee>0$ small enough so that
    \begin{equation*}
        \theta(\ee) < \frac{\omega_n(\xi)-n-d+3}{\homega -n-d+3}.
    \end{equation*}
    This is possible since $\homega < \homega_n(\xi)$, and it yields \eqref{eq: estimation Q_i+1 par Q_i} for each $i\geq 0$, assuming that $i_0$ is large enough.
\end{proof}

\section{On the dimension of some polynomials subspaces}
\label{section: espaces V_N}

We start by introducing some families of vector spaces spanned by polynomials, and we study their dimensions.

\begin{Def}
    \label{Def: fonction g_A(N)}
    Let $k\geq n$ be an integer and let $\cA$ be a subset of $\bR[X]_{\leq n}$. We define
    \begin{align*}
        \cB_k(\cA) &= \big\{ Q, XQ,\dots, X^{k-\deg(Q)}Q \,;\, Q\in\cA\setminus\{0\} \big\} \subset \bR[X]_{\leq k}, \\
        V_k(\cA) &= \Vect[\bR]{\cB_k(\cA)}, \\
        g_\cA(k) &= \dim V_k(\cA).
    \end{align*}
\end{Def}

The spaces $V_k(\cA)$ play the role of the spaces $\cU^{k}(\cA)$ in \cite[Section~3]{poels2022simultaneous} (for simultaneous approximation). We obtain analog properties. Note that if $\cA$ contains at least one non-zero polynomial, then
\begin{equation}
    \label{eq: suite V_k(A) strict. croissante}
    V_n(\cA) \varsubsetneq V_{n+1}(\cA) \varsubsetneq\cdots.
\end{equation}

The goal of this section is to prove the following result. We could not find a reference for the proposition below.

\begin{Prop}
    \label{Cor: V_2n-k =  tout l'espace}
    Let $k$ be an integer with $0\leq k \leq n$, and let $\cA$ be a set of $k+1$ linearly polynomials of $\bR[X]_{\leq n}$. Suppose that the $\gcd$ of the elements of $\cA$ is $1$ (in other words, the ideal spanned by $\cA$ is $\bR[X]$). Then
    \begin{equation}
        \label{Cor: V_N = tout l'espace}
        V_{2n-k}(\cA) = \bR[X]_{\leq 2n-k}.
    \end{equation}
\end{Prop}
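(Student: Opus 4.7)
The plan is to prove this by dualizing: the equality $V_{2n-k}(\cA) = \bR[X]_{\leq 2n-k}$ amounts to the surjectivity of the multiplication map $\phi \colon \bigoplus_{i=0}^{k} \bR[X]_{\leq 2n-k-d_i} \to \bR[X]_{\leq 2n-k}$, $(A_i) \mapsto \sum_{i} A_i Q_i$ (where $d_i = \deg Q_i$), equivalently the injectivity of its dual. A linear functional $\ell \in \bR[X]_{\leq 2n-k}^{*}$ lies in the dual kernel exactly when $\ell(X^j Q_i) = 0$ for all $0 \leq i \leq k$ and $0 \leq j \leq 2n-k-d_i$. Encoding $\ell$ as $U(X) = \sum_{s=0}^{2n-k} \ell(X^s)\, X^s$ and introducing the reversed polynomials $Q_i^*(X) := X^{d_i} Q_i(1/X)$ (each a unit in $\bR[[X]]$, since $Q_i^*(0)$ is the nonzero leading coefficient of $Q_i$), one easily checks that the vanishing conditions translate into the polynomial identities
\[
Q_i^*(X)\,U(X) = L_i(X) + X^{2n-k+1} H_i(X), \qquad \deg L_i,\, \deg H_i < d_i,
\]
which read in $\bR[[X]]$ as $U \equiv L_i / Q_i^* \pmod{X^{2n-k+1}}$ for each $i$.

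Subtracting the $j$-th identity from the $i$-th and clearing denominators yields $L_i Q_j^* - L_j Q_i^* \equiv 0 \pmod{X^{2n-k+1}}$ as a polynomial of degree $< d_i + d_j$. For pairs $(i,j)$ with $d_i + d_j \leq 2n-k+1$, this polynomial vanishes identically, giving $L_i/Q_i^* = L_j/Q_j^*$ in $\bR(X)$. Once all $L_i/Q_i^*$ are shown to coincide with a common $L/Q$ in lowest terms, the observation that a common nonzero complex root of the $Q_i^*$'s would yield a common nonzero root of the $Q_i$'s (contradicting $\gcd(\cA) = 1$) implies $\gcd(Q_0^*, \ldots, Q_k^*) = 1$, hence $Q$ is a unit and $L \in \bR[X]$. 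The identities $L_i = L Q_i^*$ combined with $\deg L_i < d_i = \deg Q_i^*$ then force $L = 0$, so $L_i = 0$ for every $i$, and therefore $U \equiv 0 \pmod{X^{2n-k+1}}$. Since $\deg U \leq 2n-k$, this gives $U = 0$ and $\ell = 0$, as required.

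The hard part will be establishing the coincidence $L_i/Q_i^* = L_j/Q_j^*$ for those pairs with $d_i + d_j > 2n-k+1$, which can occur for $k \geq 2$ when several $Q_i$'s have degrees close to $n$. Here the polynomial $L_i Q_j^* - L_j Q_i^*$ need not vanish but only be divisible by $X^{2n-k+1}$, so one writes $L_i Q_j^* - L_j Q_i^* = X^{2n-k+1} c_{ij}$ with $\deg c_{ij} \leq d_i + d_j - 2n + k - 2$. Combining the pairwise relations across triples $i < j < \ell$ yields cocycle identities $c_{ij} Q_\ell^* - c_{i\ell} Q_j^* + c_{j\ell} Q_i^* = 0$, and the linear independence of $Q_0^*, \ldots, Q_k^*$ (which follows from the linear independence of $Q_0, \ldots, Q_k$ via the reversal operation, acting as a linear isomorphism in the appropriate degree ranges) forces all $c_{ij}$ to vanish, reducing to the previous case and completing the argument.
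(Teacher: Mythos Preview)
Your dualization to a Padé-type problem is an appealing idea, and the translation of the vanishing conditions into $Q_i^{*}U \equiv L_i \pmod{X^{2n-k+1}}$ with $\deg L_i < d_i$ is correct. The argument breaks down, however, at the final step.

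First, the claim that the $Q_i^{*}$ are linearly independent is false in general. The reversal $Q\mapsto X^{\deg Q}Q(1/X)$ depends on $\deg Q$, so it is \emph{not} a single linear isomorphism on $\bR[X]_{\le n}$; for instance $Q_0=X$ and $Q_1=1$ are linearly independent with $\gcd(Q_0,Q_1)=1$, yet $Q_0^{*}=Q_1^{*}=1$. (What is linear is $Q\mapsto X^{n}Q(1/X)$, which yields $X^{\,n-d_i}Q_i^{*}$, not $Q_i^{*}$.)

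Second, and more seriously, even in situations where the $Q_i^{*}$ happen to be $\bR$-linearly independent, the cocycle identity
\[
c_{ij}\,Q_\ell^{*}-c_{i\ell}\,Q_j^{*}+c_{j\ell}\,Q_i^{*}=0
\]
has \emph{polynomial} coefficients $c_{ij}$, so $\bR$-linear independence of the $Q_i^{*}$ tells you nothing directly. Concretely, take $n=4$, $k=3$, $Q_i=X^4+X^{3-i}$ for $i=0,1,2$ and $Q_3 = X^4+1$ (adjusting to your notation), so $Q_0^{*}=1$, $Q_1^{*}=1+X$, $Q_2^{*}=1+X^2$; the single cocycle relation for $(0,1,2)$ admits nontrivial solutions with $\deg c_{ij}\le 1$, e.g.\ $c_{01}=0$, $c_{02}=1$, $c_{12}=1+X$. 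In that example the \emph{full} system of cocycle relations (over all triples) does eventually force vanishing, but this required using the high-degree coefficients of $Q_3^{*}$ in a specific way---it is not a consequence of linear independence, and you have not supplied a general mechanism. As $k$ grows the $c_{ij}$ can have larger degree, and it is not clear your cocycle system is rigid enough to force $c_{ij}=0$ without further input.

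By contrast, the paper avoids this difficulty entirely by proving that $g_{\cA}(m)=\dim V_m(\cA)$ is concave and strictly increasing in $m\ge n$, and that $V_m(\cA)=\bR[X]_{\le m}$ for $m$ large. If $g_{\cA}(2n-k)\le 2n-k$, one splits into two cases according to whether the slope $g_{\cA}(2n-k)-g_{\cA}(2n-k-1)$ is $\ge 2$ (impossible since then $g_{\cA}(2n-k)\ge g_{\cA}(n)+2(n-k)\ge 2n-k+1$) or $\le 1$ (impossible since concavity then pins the slope at $1$ forever, contradicting $g_{\cA}(m)=m+1$ eventually). This dimension-counting argument is short and sidesteps any analysis of syzygies.
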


The case $k=1$ is a classical result (it is implied by the fact that the resultant of two coprime polynomials is non-zero). The proof of Proposition~\ref{Cor: V_2n-k =  tout l'espace} is given at the end of the section. Recall that a function  $f : \{n,n+1, \ldots \} \rightarrow \bR$ is \textsl{concave} if for any $i > n$, it satisfies
\[
    f(i)-f(i-1) \geq f(i+1)-f(i).
\]
The next result is a dual version of \cite[Proposition 3.1]{poels2022simultaneous} (where we deal with simultaneous approximation to the successive powers of $\xi$).

\begin{Lem}
   Let $\cA \neq \{0\}$ be a non-empty subset of $\bR[X]_{\leq n}$. The function $g_\cA$ is concave and (strictly) increasing on $\{n,n+1,\ldots\}$.
\end{Lem}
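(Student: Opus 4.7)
The plan is to derive both properties from a single key identity: for every $k\geq n$,
\[
V_{k+1}(\cA)=V_k(\cA)+X\cdot V_k(\cA). \qquad (\star)
\]
The inclusion $\supseteq$ is immediate since $V_k(\cA)\subset V_{k+1}(\cA)$ and $X\cdot X^iQ=X^{i+1}Q\in V_{k+1}(\cA)$ for every generator $X^iQ$ of $V_k(\cA)$. For the reverse inclusion, take a generator $X^iQ$ of $V_{k+1}(\cA)$ with $Q\in\cA\setminus\{0\}$ and $0\leq i\leq k+1-\deg(Q)$: if $i=0$ then $Q\in V_k(\cA)$ since $\deg(Q)\leq n\leq k$, and if $i\geq 1$ then $X^iQ=X\cdot X^{i-1}Q$ with $X^{i-1}Q\in V_k(\cA)$ (as $0\leq i-1\leq k-\deg(Q)$).

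Strict monotonicity is then immediate. Pick any $Q_0\in\cA\setminus\{0\}$ and any $k\geq n+1$. The polynomial $X^{k-\deg(Q_0)}Q_0$ lies in $V_k(\cA)$ and has degree exactly $k$, so it cannot belong to $V_{k-1}(\cA)\subset\bR[X]_{\leq k-1}$. This forces $g_\cA(k)>g_\cA(k-1)$ for every $k\geq n+1$.

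For concavity, fix $k\geq n+1$. Applying $(\star)$ with index $k-1$ gives $X\cdot V_{k-1}(\cA)\subset V_k(\cA)$, so multiplication by $X$ induces a well-defined $\bR$-linear map
\[
\phi_k : V_k(\cA)/V_{k-1}(\cA)\longrightarrow V_{k+1}(\cA)/V_k(\cA),\qquad [P]\longmapsto[XP].
\]
Applying $(\star)$ with index $k$ shows that every class in $V_{k+1}(\cA)/V_k(\cA)$ is represented by some $XP$ with $P\in V_k(\cA)$, so $\phi_k$ is surjective. Comparing dimensions,
\[
g_\cA(k+1)-g_\cA(k)\leq g_\cA(k)-g_\cA(k-1),
\]
which is the concavity inequality at $k$. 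The only step requiring actual thought is establishing $(\star)$; once it is in hand, both the strict increase and the concavity are pure linear algebra, obtained respectively from a distinguished generator of top degree and from the $X$-multiplication map between consecutive quotients.
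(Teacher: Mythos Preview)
Your proof is correct and follows essentially the same approach as the paper. Both arguments rest on the identity $V_{k+1}(\cA)=V_k(\cA)+X\cdot V_k(\cA)$ (which the paper phrases as $\cB_i\cup X\cB_i=\cB_{i+1}$), and both derive concavity from the surjectivity of a natural map between successive quotients; the only cosmetic difference is that the paper works with the quotients $V_{i+1}/XV_i$ and the projection $V_i\to V_{i+1}/XV_i$, whereas you work with $V_{k+1}/V_k$ and the $X$-multiplication map $V_k/V_{k-1}\to V_{k+1}/V_k$.
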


\begin{proof}
    The series of inclusion \eqref{eq: suite V_k(A) strict. croissante} shows that the function $g_\cA$ is increasing on $\{n,n+1,\dots\}$. For simplicity, we write $V_i=V_{i}(\cA)$ and $\cB_{i} = \cB_{i}(\cA)$ for each $i \geq n$. Given an integer $i\geq n$ we have $XV_{i}\subset V_{i+1}$, and we set
    \[
        h(i):= \dim \big(V_{i+1}/XV_{i}\big) = g_\cA(i+1)-g_\cA(i).
    \]
    We have to prove that $h$ is decreasing on $\{n,n+1,\cdots\}$. Fix $i\geq n+1$ and consider the linear map $\pi : V_i \rightarrow V_{i+1}/XV_{i}$ defined by $\pi(P) = P + XV_{i}$. Since $\cB_{i}\cup X\cB_{i} = \cB_{i+1}$, we have $V_i + XV_i = V_{i+1}$. So $\pi$ is surjective, and consequently $\textrm{Im } \pi = V_{i+1}/XV_{i}$ is isomorphic to $V_i / \ker \pi$.
    On the other hand, $XV_{i-1}\subset V_{i} \cap XV_{i} \subset \ker\pi$, so $XV_{i-1}$ is  subspace of $\ker \pi$.  Hence
    \begin{align*}
       h(i-1) = \dim \big(V_{i}/XV_{i-1}\big)  \geq \dim\big(V_i / \ker \pi \big) = \dim \big(V_{i+1}/XV_{i}\big) = h(i).
    \end{align*}
\end{proof}

\begin{Lem}
    \label{lem : dim V_n+j(P,Q)}
    Let $P,Q \in \bR[X]_{\leq n}$ be two coprime polynomials. Then, we have
    \begin{align*}
        \dim V_{n+j}(P,Q) \geq 2(j+1),
    \end{align*}
    for each $j\in\{0,\dots,n-1\}$. In particular $V_{2n-1}(P,Q) = \bR[X]_{\leq 2n-1}$.
\end{Lem}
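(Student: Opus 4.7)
The plan is to analyze $V_{n+j}(P,Q)$ via an explicit linear surjection from a space whose dimension is known, then compute its kernel using coprimality. Write $p=\deg P$ and $q=\deg Q$ (both at most $n$), and consider the $\bR$-linear map
\[
    \Phi_j \colon \bR[X]_{\leq n+j-p}\times \bR[X]_{\leq n+j-q} \longrightarrow \bR[X]_{\leq n+j}, \qquad (A,B)\longmapsto AP+BQ.
\]
By the very definition of $V_{n+j}(P,Q)$ as the span of $P,XP,\dots,X^{n+j-p}P,Q,XQ,\dots,X^{n+j-q}Q$, the image of $\Phi_j$ is exactly $V_{n+j}(P,Q)$. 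The domain has dimension $(n+j-p+1)+(n+j-q+1) = 2(n+j) - p - q + 2$.

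Next I would determine $\ker \Phi_j$. If $AP+BQ=0$, then $AP = -BQ$, and since $P$ and $Q$ are coprime in $\bR[X]$, one has $Q\mid A$ and $P\mid B$, so $A = QC$ and $B = -PC$ for some $C\in\bR[X]$. The degree constraints on $A$ and $B$ both reduce to $\deg C \leq n+j-p-q$. Thus $\ker \Phi_j \simeq \bR[X]_{\leq n+j-p-q}$, whose dimension is $\max\bigl\{0,\;n+j-p-q+1\bigr\}$. By the rank–nullity theorem,
\[
    \dim V_{n+j}(P,Q) \;=\; 2(n+j) - p - q + 2 - \max\bigl\{0,\;n+j-p-q+1\bigr\}.
\]

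It now suffices to split into two cases. If $n+j<p+q$ (so the kernel is trivial), then $\dim V_{n+j}(P,Q) = 2(n+j)-p-q+2 \geq 2(j+1)$, using $p+q\leq 2n$. If instead $n+j\geq p+q$, then $\dim V_{n+j}(P,Q) = n+j+1$, and the inequality $n+j+1 \geq 2(j+1)$ follows from $j\leq n-1$; in particular the image fills all of $\bR[X]_{\leq n+j}$. For the final assertion, take $j=n-1$: either $p+q\leq 2n-1$, placing us in the second case and giving $V_{2n-1}(P,Q) = \bR[X]_{\leq 2n-1}$, or $p=q=n$, in which case the first case yields $\dim V_{2n-1}(P,Q) = 2(2n-1)-2n+2 = 2n = \dim \bR[X]_{\leq 2n-1}$, and again the equality holds. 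No real obstacle is expected here; the only mild care needed is in handling the kernel computation when $n+j<p+q$ (where formally there is no $C$ of the required degree) and in verifying that both degree inequalities for $A$ and $B$ collapse to the single bound $\deg C \leq n+j-p-q$.
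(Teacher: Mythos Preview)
Your proof is correct and uses the same core idea as the paper: analyze the linear map $(A,B)\mapsto AP+BQ$ and exploit coprimality. The execution differs slightly. The paper first multiplies $P$ and $Q$ by suitable products of linear factors to obtain coprime polynomials $\tP,\tQ$ of degree exactly $n$; then the map $\bR[X]_{\leq j}\times\bR[X]_{\leq j}\to\bR[X]_{\leq n+j}$, $(R,S)\mapsto R\tP+S\tQ$, is automatically injective for $j<n$, and its image sits inside $V_{n+j}(P,Q)$, giving the bound $2(j+1)$ with no case analysis. Your direct route, computing the kernel explicitly and splitting on whether $n+j<p+q$, is a bit longer but yields the exact value of $\dim V_{n+j}(P,Q)$ rather than just a lower bound, which is a small bonus.
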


\begin{proof}
    Let $p$ (resp. $q$) denote the degree of $P$ (resp. of $Q$). There exist $\alpha,\beta\in\bR$ such that the polynomial $\tP := P(X)(X-\alpha)^{n-p}$ and $\tQ:= Q(X)(X-\beta)^{n-q}$ are coprime (and of degree exactly $n$). Fix $j\in\{0,\dots,n-1\}$. Since $\tP$ and $\tQ$ are coprime and $j<n$, the linear map
    \begin{equation*}
        \left\{ \begin{array}{ccc}
                  \bR[X]_{\leq j}\times \bR[X]_{\leq j}  & \longrightarrow &  \bR[X]_{\leq n+j} \\
                     (R,S)     & \longmapsto & R\tP + S\tQ
                \end{array}\right.
    \end{equation*}
    is injective, so its image $V_{n+j}(\tP,\tQ) \subset V_{n+j}(P,Q)$ has dimension $2(j+1)$.
\end{proof}

\begin{proof}[Proof of Proposition~\ref{Cor: V_2n-k =  tout l'espace}]
     For simplicity we write $g=g_\cA$. Recall that $\cA$ has cardinality $k+1$, so that $g(n)\geq \textrm{card}(\cA) = k+1$. If $k=n$, then \eqref{Cor: V_N = tout l'espace} is automatic (since in that case $\cA$ contains a basis of $\bR[X]_{\leq n}$). So, we may assume that $k< n$. We first prove that for each sufficiently large $m$, we have
    \begin{align}
        \label{eq : il existe m tel que V_m = tout}
            V_{m}(\cA) = \bR[X]_{\leq m}.
    \end{align}
    Indeed, since the ideal spanned by $\cA$ is $\bR[X]$, there exists an integer $\ell\geq n$ such that $1\in V_\ell(\cA)$. Let $P$ be a non-zero element in $\cA$ of degree $d$, and set $m=\ell+d$. Then $V_{m}(\cA)$ contains $\bR[X]_{\leq d}$, as well as the polynomials $P,XP,\cdots,X^\ell P$. We easily deduce \eqref{eq : il existe m tel que V_m = tout}.

    \medskip

    By contradiction, suppose that \eqref{Cor: V_N = tout l'espace} does not hold, \ie
    \begin{align}
        \label{eq proof:Cor: V_N = tout l'espace: by contradiction}
        g(2n-k) \leq 2n-k.
    \end{align}
    We distinguish between two cases. Suppose first that $g(2n-k)-g(2n-k-1) \geq 2$. By concavity, then $g(j)-g(j-1)\geq 2$ for each $j$ with $n < j \leq n-k$, and we deduce that
    \begin{align*}
        g(2n-k) \geq g(n)+2(n-k) \geq k+1 + 2(n-k) = 2n-k+1,
    \end{align*}
    since $g(n) \geq \textrm{card}(\cA) = k+1$. This contradicts \eqref{eq proof:Cor: V_N = tout l'espace: by contradiction}, so $g(2n-k)-g(2n-k-1) \leq 1$. Since the function $g$ is increasing and concave, it is linear with slope $1$ on $\{2n-k,2n-k+1,\dots\}$. Choosing $m > 2n-k$ such that \eqref{eq : il existe m tel que V_m = tout} holds, we obtain by \eqref{eq proof:Cor: V_N = tout l'espace: by contradiction}
    \begin{align*}
        m+1 = g(m) = g(2n-k)+ m - (2n-k) \leq m,
    \end{align*}
    a contradiction. Hence $g(2n-k) = 2n-k+1$, or equivalently, \eqref{Cor: V_N = tout l'espace} holds.
\end{proof}

\begin{figure}[H]
    \begin{tikzpicture}[xscale=0.5,yscale=0.4, line cap=round,line join=round,>=triangle 45,x=1cm,y=1cm]
        \clip(-5,-2) rectangle (20,15);
        \draw[-stealth, semithick] (-0.15,0)--(18,0) node[below]{$i$};
        \draw[-stealth, semithick] (0,-0.15)--(0,14.5) node[left]{$g(i)$};

        \draw [thick] (0,1)-- (1,5);
        \draw [thick] (1,5)-- (2.66,8.059);
        \draw [thick] (2.66,8.059)-- (7,10.5);
        \draw [thick] (7,10.5)-- (14,12);
        \draw [dashed] (7,13.34)-- (7,-0.5);

        \node[left] at (-0,1.7) {$\geq \textrm{card}(A)$};
        \node[above] at (4,12) {slope $\geq 2$};
        \node[above] at (10,12) {slope $= 1$};
        \node[below] at (7,0)  {$2n-\ell$};
        \node[below] at (14,0)  {$2n-1$};
        \node[left] at (0,13)  {$2n$};

        \draw [dash pattern=on 2pt off 3pt](0,12)-- (14,12);
        \draw [dash pattern=on 2pt off 3pt] (14,12)-- (14,0);

       \node[below] at (0,-0.15) {$i=n$};
       \node[draw,circle,inner sep=1.25pt,fill] at (0,1) {};
       \node[draw,circle,inner sep=1.25pt,fill] at (1,5) {};
       \node[draw,circle,inner sep=1.25pt,fill] at (2.666,8.059) {};
       \node[draw,circle,inner sep=1.25pt,fill] at (7,10.5) {};
       \node[draw,circle,inner sep=1.25pt,fill] at (14,12) {};
       \node[draw,circle,inner sep=1.25pt,fill] at (0,12) {};
       \node[draw,circle,inner sep=1.25pt,fill] at (14,0) {};
       \node[draw,circle,inner sep=1.25pt,fill] at (7,0) {};
       \node[draw,circle,inner sep=1.25pt,fill] at (0,0) {};
    \end{tikzpicture}
    \caption{Graph of the piecewise linear function interpolating the
    values $g(i)=\dim V_i(\cA)$ at integers $i\in\{n,\dots,2n-1\}$.}
    \label{fig:1}
\end{figure}
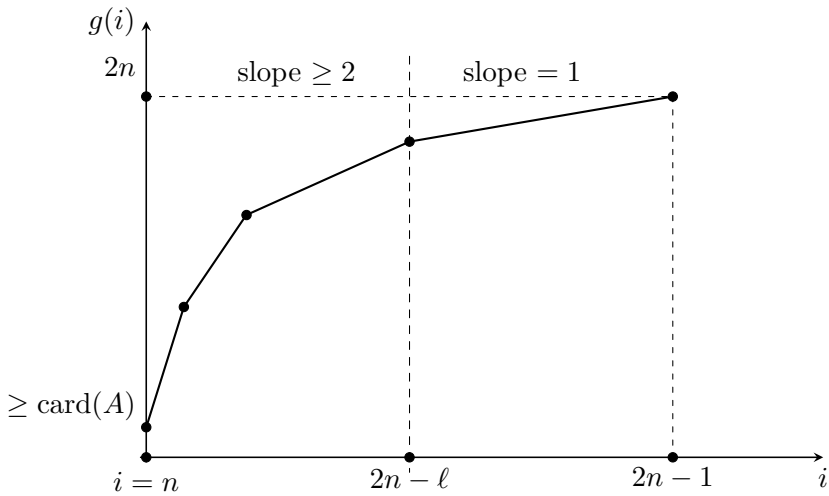

\section{Proof of Theorem~\ref{Thm : main d=2} (case $d=2$)}
\label{Section: cas d = 2}

In this section, we deal with the case $d=2$ to prove Theorem~\ref{Thm : main d=2}, namely that $\homega_3(\xi)\leq 2+\sqrt 5  = 4.23\cdots$ and  $\homega_n(\xi) \leq 2n-2$ for each $n\geq 4$. This was already known for $n\geq 10$, however for $n=3,\dots,9$ it is a new result. For $n=3$, our bound improves on the bound $\homega_3(\xi)\leq 3+\sqrt 2 = 4.41\cdots$ due to Bugeaud and Schleischitz \cite{bugeaudSchleischitz2016Uniform}. Moreover, our proof does not require Marnat-Moshchevitin's inequality \cite{marnat2018optimal}.

\begin{proof}[Proof of Theorem~\ref{Thm : main d=2}]
    Suppose that $\homega_n(\xi) > 2n-2$, and fix a real number $\homega$ such that
    \[
        \homega_n(\xi) > \homega > 2n-2.
    \]
    Let $(P_i)_{i\geq 0}$ be a sequence of minimal polynomials associated to $n$ and $\xi$ as in Section~\ref{section: minimal pol}. According to Lemma~\ref{lem: existence facteur irreductible grand} (with $d=2$) there exists an index $i_0\geq 0$ such that $P_i$ has degree $n$ and is irreducible for each $i\geq i_0$. Consequently, up to a finite number of terms, the sequence $(P_i)_{i\geq 0}$ coincides with the sequence $(Q_i)_{i\geq 0}$ of Proposition~\ref{prop: existence Q_i}. Let $I$ denotes the set of indices $i\geq i_0+1$ such that $P_{i-1}$, $P_i$ and $P_{i+1}$ are linearly independent. By Lemmas~\ref{Lem: P_iH_i+1 = P_j-1H_j} and~\ref{lem: I infini}, the set $I$ is infinite, and for any consecutive $i<j$ in $I$, we have
    \[
        \normH{P_{i+1}}|P_i(\xi)| \asymp \normH{P_j}|P_{j-1}(\xi)|.
    \]
    Furthermore, the irreducible polynomials $P_i$ and $P_{i+1}$ are also coprime since $\normH{P_i} < \normH{P_{i+1}}$ and $|P_i(\xi)| < |P_{i+1}(\xi)|$. Lemma~\ref{lem: estimation classique du resultant} yields
    \begin{align*}
        1 \leq |\Res(P_i,P_{i+1})| \ll \normH{P_i}^{n-1}\normH{P_{i+1}}^n|P_{i}(\xi)| &\ll \normH{P_i}^{n-1}\normH{P_{j}}^n|P_{j-1}(\xi)| \\
        & \ll \normH{P_i}^{n-1}\normH{P_{j}}^{n-\homega}.
    \end{align*}
    We deduce that
    \begin{align}
        \label{eq proof: estimation theta}
        \normH{P_{j}} \leq \normH{P_i}^\theta \quad \textrm{where } \theta = \frac{n-1}{\homega-n}.
    \end{align}
    Let $h<i<j$ be consecutive indices in $I$. We have the following configuration
    \[
        \Vect[\bR]{P_h,P_{h+1}} = \Vect[\bR]{P_{i-1},P_{i}} \neq \Vect[\bR]{P_{i},P_{i+1}},
    \]
    so $P_{h}, P_{h+1}, P_{i+1}$ are linearly independent. Proposition~\ref{Cor: V_2n-k =  tout l'espace} combined with Lemma~\ref{lem : dim V_n+j(P,Q)} implies that
    \begin{align*}
        \Big(\bR[X]_{\leq n-2}P_{h}\oplus \bR[X]_{\leq n-2}P_{h+1}\Big) + \bR[X]_{\leq n-2}P_{i+1} = \bR[X]_{\leq 2n-2}.
    \end{align*}
     Choose $k\in\{0,\dots,n-2\}$ such that $\big(P_{h},\dots,X^{n-2}P_{h},P_{h+1},\dots,X^{n-2}P_{h+1}, X^kP_{i+1}\big)$ is a basis of $\bR[X]_{\leq 2n-2}$. We denote by $M$ the matrix of this basis expressed into the canonical basis $(1,X,\dots,X^{2n-2})$.  Estimating $\det(M)$ as in the proof of Lemma~\ref{lem: estimation classique du resultant} (in other words, for $\ell=2,\dots,2n-2$, we add to the first row of $M$  the $\ell$-th row multiplied by $\xi^{\ell-1}$), we get the estimates
    \begin{align*}
        1  \leq \big|\det\big(M)\big| & \ll |P_{h}(\xi)|\normH{P_{h}}^{n-2}\normH{P_{h+1}}^{n-1}\normH{P_{i+1}}.
    \end{align*}
    Now, since $\normH{P_{h+1}}^{n-1}|P_{h}(\xi)| \asymp \normH{P_{h+1}}^{n-2}\normH{P_i}|P_{i-1}(\xi)| \ll \normH{P_i}^{n-1-\homega}$, we deduce that
    \begin{align}
        \label{eq proof: estimation tau}
        \normH{P_i}^{\homega-n+1} \ll \normH{P_{h}}^{n-2}\normH{P_{j}}.
    \end{align}

    For each consecutive $i<j$ in $I$, define $\tau_i\in(0,1)$ by
    \[
        \normH{P_i} = \normH{P_j}^{\tau_i}
    \]
    and set $\tau = \limsup_{i\in I, i\rightarrow\infty} \tau_i \in [0,1]$. Let $h<i<j$ be consecutive indices in $I$ as previously. By \eqref{eq proof: estimation tau}, we obtain
    \[
        \homega-n+1  \leq (n-2)\tau_h + \frac{1}{\tau_i} +o(1) \leq (n-2)\tau + \frac{1}{\tau_i} +o(1).
    \]
    We infer that
    \begin{align}
        \label{eq proof: majoration homega_3: eq 2}
        p(\tau) \geq 0, \quad \textrm{where } p(t) = (n-2)t^2-(\homega-n+1)t+1.
    \end{align}
    Note that
    \begin{align*}
        p(0) &= 1, \\
        p\left(\frac{1}{n-2}\right) & = \frac{2n-2-\homega}{n-2} < 0, \\
        p(1) &= 2n-2-\homega < 0.
    \end{align*}
    We deduce that $p$ has one root $\alpha\in(0,1/(n-2))$ and one root larger than $1$. Since $\tau\in[0,1]$ and $p(\tau)\geq 0$, we obtain $\tau \leq \alpha$. Combined with the estimate $\normH{P_i} = \normH{P_j}^{\tau_i}  \ll \normH{P_i}^{\theta\tau_i}$ valid
    for any $i\in I$ (it is a consequence of \eqref{eq proof: estimation theta}), this leads to
    \begin{align}
        \label{eq proof: majoration homega_3: eq 3}
        1 \leq \theta\tau \leq \theta\alpha < \frac{n-1}{(n-2)^2}.
    \end{align}
    We easily check that it is impossible when $n\geq 4$ (the right-hand side is strictly less than $1$), so that $\homega_n(\xi)\leq 2n-2$ for each $n\geq 4$.

    \medskip

    We now deal with the case $n=3$. Suppose by contradiction that $\homega_3(\xi) > 2+\sqrt 5$ and choose $\homega$ such that
    \[
        \homega_3(\xi) > \homega > 2+\sqrt 5.
    \]
    The polynomial $p$ defined in \eqref{eq proof: majoration homega_3: eq 2} satisfies $p(t) = t^2-(\homega-2)t+1$. Denote by $\alpha$ its smallest root, and by $\beta = (\sqrt 5 -1)/2$ be the smallest one of the polynomial $t^2-\sqrt 5 t + 1$. We find
    \begin{align*}
        0 = \beta^2-\sqrt 5 \beta + 1 > \beta^2-(\homega-2 )\beta + 1 = p(\beta),
    \end{align*}
    hence $\alpha < \beta$. Combined with $\theta = 2/(\homega-3) < 1/\beta$, it implies that $\theta \alpha < 1$, which contradicts \eqref{eq proof: majoration homega_3: eq 3}. It follows that $\homega_3(\xi) \leq 2+\sqrt 5$.
\end{proof}


\section{Multilinear algebra and height of polynomial subspaces}

This section is divided into two parts. We introduce and study a quantity $\HHstar(V)$ associated to a subspace $V\subset\bR^m$ defined over $\bQ$ in Section~\ref{def petitesse HHstar(V)}. Intuitively, $\HHstar(V)$ is small if $V$ is spanned by good polynomials approximations of $\bZ[X]$ (which are small when evaluated at $\xi$). This will be a key-point for estimating the height of the polynomials $Q_i$ of Section~\ref{section: sequence des Q_i}. In order to define $\HHstar$, we need some tools of multilinear algebra that we recall in Section~\ref{section: hauteurs tordues}. In Appendix~\ref{subsection: twisted heights} we give another interpretation of $\HHstar$ in term of twisted heights.

\subsection{Multilinear algebra and Hodge duality}
\label{section: hauteurs tordues}

For each integer $m$, we view $\bR^{m+1}$ as an Euclidean space for the usual scalar product $\psc{\cdot}{\cdot}$, and we denote by $\norm{\cdot}$ the associated Euclidean norm. For each  $k = 1, \cdots , m+1$, we identify $\bigwedge^k \bR^{m+1}$ with $\bR^N$, where $N = \binom{m+1}{k}$, via a choice of ordering of the Plücker coordinates, and we denote by $\norm{\by}$ the norm of a point $\by \in \bigwedge^k \bR^{m+1}\cong \bR^N$. This is independent of the ordering of its coordinates. Let $V$ be a $k$-dimensional subspace of $\bR^{m+1}$ defined over $\bQ$, \ie such that $\Vect[\bR]{V\cap\bQ^{m+1}} = V$. Its (standard) \textsl{height} $\HH(V)$ is the covolume of the lattice $V\cap\bZ^{m+1}$ inside $V$ (with the convention that $\HH(V) = 1$ if $V=\{0\}$). Explicitly, we have
\[
    \HH(V) := \norm{ \bx_1\wedge \cdots \wedge \bx_k},
\]
for any $\bZ$--basis $(\bx_1,\dots,\bx_k)$ of the lattice $V\cap \bZ^{m+1}$. Schmidt established the very nice inequality
\begin{equation*}
    \HH(U\cap V)\HH(U+V) \leq \HH(U)\HH(V),
\end{equation*}
valid for any subspaces $U,V$ of $\bR^{m+1}$ defined over $\bQ$ (see \cite[26, Chapter I, Lemma 8A]{Schmidt}). In this paper, we need to work with a ``twisted'' height and the corresponding version of Schmidt's inequality (which is obtained by following Schmidt's original arguments).

\medskip

Let $(\be_1,\dots,\be_{m+1})$ denote the canonical basis of $\bR^{m+1}$, and let $k$ be an integer with $0\leq k \leq m+1$. The Hodge star operator
\begin{align*}
    * : {\bigwedge}^k(\bR^{m+1}) \mathop{\longrightarrow}^{\sim} {\bigwedge}^{m+1-k}(\bR^{m+1})
\end{align*}
is defined by
\begin{align*}
    *(\be_{i_1}\wedge \cdots \wedge \be_{i_k}) = \ee_{i_1,\dots,i_k}\be_{j_1}\,\wedge \cdots \wedge \be_{j_{m+1-k}}
\end{align*}
for any indices $i_1 < \cdots < i_k$ and $j_1 < \cdots < j_{m+1-k}$ forming a partition of $\{1,\dots,m+1\}$, where $\ee_{i_1,\dots,i_k}$ denotes the signature of the substitution $(1,\dots,m+1) \mapsto (j_1,\dots,j_{m+1-k}, i_1,\dots,i_k)$. Given $\bX\in \bigwedge^k\bR^{m+1}$, the point $*\bX$ is called the \textsl{Hodge dual} of $\bX$.

\medskip

We now collect some useful properties of the Hodge star operator, see for example \cite{Hodge}, \cite{bourbakiAlgebre} and \cite[Section 3]{bugeaud2010transfer} for more details. First,
\[
    \norm{*\bX} = \norm{\bX} \AND *(*\bX) = (-1)^{k(m+1-k)}\bX
\]
for any $\bX\in \bigwedge^k\bR^{m+1}$. If $\bX = \bx_1\wedge \cdots \wedge \bx_k$ is a system of  Plücker coordinates of a $k$-dimensional subspace $V\subset \bR^{m+1}$, then $*\bX$ is a system of Plücker coordinates of its orthogonal $V^\perp$. This implies the classical identity
\[
    \HH(V) = \HH(V^\perp).
\]
If $k>0$, then given $\by\in\bR^{m+1}$ and a multivector $\bX\in \bigwedge^k\bR^{m+1}$, the point
\begin{align*}
    \by \lrcorner\, \bX = *\big(\by\wedge (*\bX)\big) \in {\bigwedge}^{k-1}(\bR^{m+1})
\end{align*}
is called the \textsl{contraction} of $\bX$ by $\by$ (see \cite[Lemma 2]{bugeaud2010transfer}). Explicitly, if $\bX = \bx_1\wedge \cdots \wedge \bx_k$ is a decomposable multivector, then
\begin{align}
    \label{eq: contraction formule explicite}
    \by \lrcorner\, \bX = \sum_{i=1}^{k} (-1)^{k-i} \psc{\bx_i}{\by} \, \bx_1\wedge \cdots \wedge \widehat{\bx_i} \wedge \cdots \wedge \bx_k,
    \end{align}
where the hat on $\bx_i$ means that this term is omitted  from the wedge product (see \cite[Eq (3.3)]{bugeaud2010transfer}). In particular, if $k=1$ and $\bX = \bx\in\bR^{m+1}$, we simply have
\begin{align}
    \label{eq: contraction et produit scalaire}
    \by \lrcorner\, \bx =  \psc{\by}{\bx}.
\end{align}

\subsection{Schmidt's inequality}
\label{def petitesse HHstar(V)}

Let $m$ be a non-negative integer and set $\Xi_m = (1,\xi,\xi^2,\dots,\xi^m)$. We keep the notation of Section~\ref{section: hauteurs tordues}.

\begin{Def}
    \label{Def: def HHstar}
    Let $V$ be a $k$-dimensional subspace of $\bR^{m+1}$ defined over $\bQ$, with $k\geq 1$, and let $(\bx_1,\dots,\bx_k)$ be a $\bZ$--basis of the lattice $V\cap \bZ^{m+1}$. We set
    \begin{align*}
        \HHstar(V) = \norm{\Xi_m \, \lrcorner\, \bX } = \norm{\Xi_m \wedge (*\bX)},
    \end{align*}
    where $\bX = \bx_1\wedge \cdots \wedge\bx_k$. By convention, we set $\HHstar(\{0\}) = 0$. Following the notation of \cite[Section 11]{poels2022simultaneous}, we also set
    \begin{align*}
        L_\xi(V) = \norm{\Xi_m \wedge \bX},
    \end{align*}
    with the convention that $L_\xi(\{0\}) = \norm{\Xi_m}$.
\end{Def}

\noindent\textsl{Remark.} If $(\bx_1',\dots,\bx_k')$ is another $\bZ$--basis of $V\cap \bZ^{m+1}$, then $\bx_1'\wedge \cdots \wedge\bx_k' = \pm \bX$. Consequently, $\HHstar(V)$ and $L_\xi(V)$ do not depend on the choice of the basis. In \cite{poels2022simultaneous}, we considered $L_\xi(V)$ for spaces $V$ spanned by good simultaneous approximations. The function $\HHstar$ is connected to the quantity introduced in \cite[Definition~7.1]{poelsroy2022parametric} (where we work in a number $K$ instead of $\bQ$). Note that $\HHstar(V) = 0$ if and only if $\Xi_m\in V^\perp$. Since $\xi$ is transcendental, it is only possible when $V=\{0\}$. We have
\begin{align*}
    \HHstar(\bR^{m+1}) = \norm{\Xi_m } \asymp 1,
\end{align*}
where the implicit constants depend on $\xi$ and $m$ only. Moreover, \eqref{eq: contraction et produit scalaire} implies that
\begin{align}
    \label{eq: HHstar(bx)}
    \HHstar\big(\Vect[\bR]{\bx}\big) =  |\psc{\Xi_m}{\bx}|
\end{align}
for any primitive integer point $\bx\in\bZ^{m+1}$. Eq. \eqref{eq: contraction formule explicite} yields the explicit formula
\begin{align}
    \label{eq: HHstar(V) formule explicite}
    \HHstar(V) = \norm{ \sum_{i=1}^{k} (-1)^{k-i}\psc{\bx_i}{\Xi_m} \, \bx_1\wedge \cdots \wedge \widehat{\bx_i} \wedge \cdots \wedge \bx_k }.
\end{align}
On the other hand, if $(\by_1,\dots,\by_{m+1-k})$ is a $\bZ$--basis of  $V^\perp\cap \bZ^{m+1}$,  then $*\bX = \pm \by_1\wedge \cdots \wedge \by_{m+1-k}$. Consequently, we can also write
\begin{align}
    \label{eq: D_xi(V) = L_xi(V^perp)}
    \HHstar(V) = \norm{\Xi_m \wedge \by_1\wedge\cdots \wedge \by_{m+1-k}} = L_\xi(V^\perp).
\end{align}
Both expressions of $\HHstar(V) $ will be useful.

\begin{Prop}[Schmidt's inequality]
    \label{Prop: Schmidt inegalite pour HHstar}
    For any subspaces $U,V$ of $\bR^{m+1}$ defined over $\bQ$, we have
    \begin{equation}
        \label{eq: Schmidt's inequality HHstar}
        \HHstar(U\cap V)\HHstar(U+V) \leq \HHstar(U)\HHstar(V)
    \end{equation}
    and
    \begin{align}
        \label{eq: Schmidt's inequality L_xi}
        L_\xi(U \cap V) L_\xi(U + V) \leq L_\xi(U)L_\xi(V).
    \end{align}
\end{Prop}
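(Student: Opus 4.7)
The two inequalities \eqref{eq: Schmidt's inequality HHstar} and \eqref{eq: Schmidt's inequality L_xi} are equivalent: replacing $U,V$ by $U^\perp,V^\perp$ in the latter and invoking \eqref{eq: D_xi(V) = L_xi(V^perp)} together with the duality identities $(U\cap V)^\perp = U^\perp + V^\perp$ and $(U+V)^\perp = U^\perp\cap V^\perp$ recovers the former. The plan is therefore to prove \eqref{eq: Schmidt's inequality L_xi} only.

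Following Schmidt's classical choice of bases, fix a $\bZ$-basis $\bx_1,\dots,\bx_r$ of $(U\cap V)\cap\bZ^{m+1}$ and extend it to $\bZ$-bases $\bx_1,\dots,\bx_r,\bu_1,\dots,\bu_s$ of $U\cap\bZ^{m+1}$ and $\bx_1,\dots,\bx_r,\bv_1,\dots,\bv_t$ of $V\cap\bZ^{m+1}$. The lattice generated by these $r+s+t$ vectors has finite index $q\geq 1$ in $(U+V)\cap\bZ^{m+1}$. Setting
\[
\mathbf{M} := \Xi_m\wedge\bx_1\wedge\cdots\wedge\bx_r,\quad \bY := \bu_1\wedge\cdots\wedge\bu_s,\quad \bZZ := \bv_1\wedge\cdots\wedge\bv_t,
\]
Definition~\ref{Def: def HHstar} translates the four relevant quantities into $L_\xi(U\cap V)=\norm{\mathbf{M}}$, $L_\xi(U)=\norm{\mathbf{M}\wedge\bY}$, $L_\xi(V)=\norm{\mathbf{M}\wedge\bZZ}$, and $L_\xi(U+V)=q^{-1}\norm{\mathbf{M}\wedge\bY\wedge\bZZ}$. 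Since $q\geq 1$, \eqref{eq: Schmidt's inequality L_xi} will follow from
\[
\norm{\mathbf{M}}\cdot\norm{\mathbf{M}\wedge\bY\wedge\bZZ} \leq \norm{\mathbf{M}\wedge\bY}\cdot\norm{\mathbf{M}\wedge\bZZ}.
\]

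The last inequality will be established via an orthogonal projection argument, which I expect to be the main technical step. Transcendence of $\xi$ ensures $\Xi_m\notin U\cap V$ whenever $U\cap V\neq\bR^{m+1}$ (any non-zero integer vector orthogonal to $U\cap V$ would give a non-trivial polynomial vanishing at $\xi$); in the trivial case $U\cap V=\bR^{m+1}$ every $L_\xi$ involved vanishes. Otherwise $\mathbf{M}$ is a non-zero decomposable multivector representing the subspace $M := \Vect[\bR]{\Xi_m,\bx_1,\dots,\bx_r}$, and $\mathbf{M}\wedge\bw=0$ for every $\bw\in M$. Let $\pi\colon\bR^{m+1}\to M^\perp$ be orthogonal projection. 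Expanding each $\bu_j=\pi\bu_j+(\bu_j-\pi\bu_j)$ and $\bv_k=\pi\bv_k+(\bv_k-\pi\bv_k)$ by multilinearity and discarding every term containing a factor in $M$ yields
\[
\mathbf{M}\wedge\bY=\mathbf{M}\wedge\bY',\quad \mathbf{M}\wedge\bZZ=\mathbf{M}\wedge\bZZ',\quad \mathbf{M}\wedge\bY\wedge\bZZ=\mathbf{M}\wedge\bY'\wedge\bZZ',
\]
where $\bY':=\pi\bu_1\wedge\cdots\wedge\pi\bu_s$ and $\bZZ':=\pi\bv_1\wedge\cdots\wedge\pi\bv_t$ are built from vectors in $M^\perp$. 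Because $M\perp M^\perp$, the combined Gram matrix of the factors of $\mathbf{M}$ together with those of any decomposable $\bA$ built from $M^\perp$ is block-diagonal, so $\norm{\mathbf{M}\wedge\bA}=\norm{\mathbf{M}}\cdot\norm{\bA}$. Substituting, the target inequality reduces to
\[
\norm{\bY'\wedge\bZZ'} \leq \norm{\bY'}\cdot\norm{\bZZ'},
\]
which is immediate from Fischer's inequality applied to the positive semidefinite block Gram matrix of $(\pi\bu_1,\dots,\pi\bu_s,\pi\bv_1,\dots,\pi\bv_t)$.
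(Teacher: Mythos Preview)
Your proof is correct and follows the same route as the paper: reduce \eqref{eq: Schmidt's inequality HHstar} to \eqref{eq: Schmidt's inequality L_xi} via the duality \eqref{eq: D_xi(V) = L_xi(V^perp)}, pick compatible $\bZ$-bases for $U\cap V$, $U$, $V$, and apply the pure-product inequality $\norm{\bX}\,\norm{\bX\wedge\bY\wedge\bZZ}\leq\norm{\bX\wedge\bY}\,\norm{\bX\wedge\bZZ}$ with $\bX=\Xi_m\wedge\bx_1\wedge\cdots\wedge\bx_r$. The paper simply cites this last inequality from Schmidt and glosses over the finite-index factor $q$; your orthogonal-projection-plus-Fischer argument supplies a self-contained proof of it, but the overall strategy is identical.
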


\begin{proof}
    In view of \eqref{eq: D_xi(V) = L_xi(V^perp)}, we only need to prove that \eqref{eq: Schmidt's inequality L_xi} holds for any pair $(U,V)$ as in the statement of the proposition (then, it suffices to apply \eqref{eq: Schmidt's inequality L_xi} to the pair $(U^\perp,V^\perp)$). We follow Schmidt's arguments \cite[Chapter I, Lemma 8A]{Schmidt}. 
    For any $\bX, \bY, \bZZ \in \bigwedge \bR^{m+1}$ which are pure products of elements in $\bR^{m+1}$, we have
    \begin{align}
        \label{eq proof: prop Schmidt inegalite pour HHstar}
        \norm{\bX} \norm{\bX\wedge \bY \wedge \bZZ} \leq \norm{\bX\wedge \bY} \norm{\bX\wedge \bZZ}.
    \end{align}
    Let $U, V$ be subspaces of $\bR^{m+1}$ defined over $\bQ$. If $U=\{0\}$ or $V = \{0\}$, then \eqref{eq: Schmidt's inequality L_xi} is trivial, so we may assume that $U$ and $V$ have dimension $\geq 1$. Let $\bx_1,\dots,\bx_r$ be a $\bZ$--basis of $U\cap V \cap \bZ^{m+1}$, which we complete to a $\bZ$--basis $\bx_1,\dots,\bx_r,\by_1,\dots,\by_s$ of $U\cap\bZ^{m+1}$ (resp. $\bx_1,\dots,\bx_r,\bz_1,\dots,\bz_t$ of $V\cap\bZ^{m+1}$). Set
    \begin{align*}
        \bX = \Xi_m\wedge \bx_1 \wedge \dots \bx_r, \quad \bY = \by_1\wedge \dots \wedge \by_s \AND \bZZ = \bz_1 \wedge \cdots \wedge\bz_t.
    \end{align*}
    We get \eqref{eq: Schmidt's inequality L_xi} by applying \eqref{eq proof: prop Schmidt inegalite pour HHstar} with the above choice of pure products.
\end{proof}

We identify $\bR[X]_{\leq m}$ to $\bR^{m+1}$ and  $\bR^{m+1}$ to the space of $(m+1)\times 1$ columns matrices with real coefficients via the isomorphisms
\begin{equation}
    \label{eq: identification poly avec points de R^m}
    \sum_{k=0}^{m} a_k X^k \longmapsto (a_0,\dots,a_{m}) \AND
    (a_0,\dots,a_{m})\mapsto
    \left(\begin{array}{c}
      a_0 \\
      \vdots \\
      a_{m}
    \end{array} \right).
\end{equation}
Then, for any $P\in\bR[X]_{\leq m}$, we have $P(\xi) = \psc{\bz}{\Xi_m}$, where $\bz\in\bR^{m+1}$ corresponds to $P$. In particular, if $P\in\bZ[X]_{\leq m}$ is primitive, then \eqref{eq: HHstar(bx)} may be rewritten as
\begin{align}
    \label{eq: HHstar(P)}
    \HHstar\big(\Vect[\bR]{P}\big) =  |P(\xi)|.
\end{align}
We will repeatedly use the following ``twisted'' dual version of \cite[Lemma 2.1]{poels2022simultaneous}. Intuitively, it implies that if $V$ is spanned by polynomials $P$ such that $|P(\xi)|$ is small, then $\HHstar(V)$ is also small. This generalizes the inequality
\[
    \HHstar(\Vect[\bR]{P}) \leq |P(\xi)|
\]
valid for any $P\in\bZ[X]_{\leq m}$.

\begin{Lem}
    \label{lem: estimation des pseudo resultant tordus}
    There is a positive constant $c$, which only depends on $n$ and $\xi$, with the following property. For any linearly independent polynomials $P_1,\dots,P_k\in\bZ[X]_{\leq m}$ (with $k\geq 1$), we have
    \begin{align}
        \label{eq: estimation L_xi(V) avec T}
        \HHstar\big(\Vect[\bR]{P_1,\dots,P_k} \big) & \leq c \sum_{i=1}^{k} \frac{|P_i(\xi)|}{\normH{P_i}} \prod_{j=1}^{k} \normH{P_j}.
    \end{align}
\end{Lem}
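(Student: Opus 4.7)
The plan is to use the explicit formula \eqref{eq: HHstar(V) formule explicite} for $\HHstar(V)$, but applied to the basis $(\bv_1,\dots,\bv_k)$ of $V$ consisting of the integer column vectors associated to $P_1,\dots,P_k$ via the identification \eqref{eq: identification poly avec points de R^m}, rather than to a $\bZ$-basis of $V\cap\bZ^{m+1}$. The key observation is that $\HHstar(V)$ is a norm of a contraction, and this contraction is $\bR$-multilinear and alternating in any generating set, so changing basis only multiplies the result by a nonzero scalar — and if we change between two $\bZ$-bases (or a $\bZ$-basis and a $\bZ$-spanning set), this scalar is a nonzero integer.

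First, I would set $\bv_i\in\bZ^{m+1}$ to be the coordinate vector of $P_i$ and denote by $(\bx_1,\dots,\bx_k)$ a $\bZ$-basis of the lattice $V\cap \bZ^{m+1}$. Writing each $\bv_i$ as an integer combination of the $\bx_j$, the wedge satisfies
\[
\bv_1\wedge\cdots\wedge\bv_k \;=\; d\cdot\bx_1\wedge\cdots\wedge\bx_k,
\]
where $d=[V\cap\bZ^{m+1}:\bZ\bv_1+\cdots+\bZ\bv_k]$ is a positive integer. Applying the contraction $\Xi_m\lrcorner(\cdot)$ to both sides and taking Euclidean norms yields, via \eqref{eq: contraction formule explicite} and $\psc{\bv_i}{\Xi_m}=P_i(\xi)$,
\[
d\cdot\HHstar(V) \;=\; \norm[\Big]{\sum_{i=1}^{k}(-1)^{k-i}P_i(\xi)\,\bv_1\wedge\cdots\wedge\widehat{\bv_i}\wedge\cdots\wedge\bv_k}.
\]

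Next, I would apply the triangle inequality followed by Hadamard's inequality $\norm{\bv_{j_1}\wedge\cdots\wedge\bv_{j_{k-1}}}\le \prod\norm{\bv_{j_\ell}}$, together with the obvious bound $\norm{\bv_j}\le \sqrt{m+1}\,\normH{P_j}$ coming from \eqref{eq: identification poly avec points de R^m}. Dropping the factor $d\ge 1$ on the left and collecting powers of $\sqrt{m+1}$ into a constant $c=c(n,\xi)$ (noting that in the applications of this lemma one has $m$ bounded by a function of $n$, or else $c$ also absorbs the explicit dependence in $m$), this gives
\[
\HHstar(V) \;\le\; c\sum_{i=1}^{k}|P_i(\xi)|\prod_{j\neq i}\normH{P_j} \;=\; c\sum_{i=1}^{k}\frac{|P_i(\xi)|}{\normH{P_i}}\prod_{j=1}^{k}\normH{P_j},
\]
which is the desired estimate.

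There is no real obstacle here; the one subtle point worth emphasising is the passage from a $\bZ$-basis of $V\cap\bZ^{m+1}$ (needed for the \emph{definition} of $\HHstar(V)$) to the $\bZ$-spanning family $(\bv_1,\dots,\bv_k)$, which is what the integer index $d\ge 1$ takes care of. This is precisely the reason why we obtain an \emph{upper} bound for $\HHstar(V)$ and not an equality: if the $P_i$ fail to generate the full lattice $V\cap\bZ^{m+1}$, then $d>1$ and the right-hand side overestimates $\HHstar(V)$ accordingly.
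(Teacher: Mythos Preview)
Your proof is correct and follows essentially the same route as the paper: pass from a $\bZ$-basis of $V\cap\bZ^{m+1}$ to the given polynomials via a nonzero integer scalar on the top wedge, then apply the contraction formula \eqref{eq: contraction formule explicite} together with the triangle and Hadamard inequalities. Your write-up is in fact slightly more explicit about the index $d$ and the comparison $\norm{\bv_j}\le\sqrt{m+1}\,\normH{P_j}$, and your side remark that the constant really depends on $m$ (which in all applications is bounded in terms of $n$) is well taken.
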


\begin{proof}
    Let $Q_1,\dots,Q_k$ be a $\bZ$-- basis of $V\cap\bZ[X]_{\leq m}$, where  $V=\Vect[\bR]{P_1,\dots,P_k}$. There exists a non-zero $\alpha\in\bZ$ such that
    \[
        P_1\wedge \cdots \wedge P_k = \alpha Q_1\wedge \cdots \wedge Q_k,
    \]
    and so
    \[
        \HHstar(V) = \norm{ \Xi_m\,\lrcorner \,(Q_1\wedge \cdots \wedge Q_k)} \leq \norm{ \Xi_m\,\lrcorner \,(P_1\wedge \cdots \wedge P_k)}.
    \]
    On the other hand, by \eqref{eq: contraction formule explicite} combined with Hadamard's inequality, we obtain
    \begin{align*}
        \norm{\Xi_m\,\lrcorner \,(P_1\wedge \cdots \wedge P_k)} & = \norm{\sum_{i=1}^{k} (-1)^{k-i} P_i(\xi) \times P_1\wedge \cdots \wedge \widehat{P_i} \wedge \cdots \wedge P_k } \\
        & \ll \sum_{i=1}^{k} |P_i(\xi)| \normH{P_1}\cdots \widehat{\normH{P_i}} \cdots \normH{P_k}
    \end{align*}
    (recall that the naive height $\normH{\cdot}$ is defined in Section~\ref{section: notation}).
\end{proof}

\section{Subfamilies of polynomials: dimension and height}
\label{section : indpce lineaire Q_i}

Let $d,n,\xi$ and $\homega$ be as in Section~\ref{section: sequence des Q_i}. In particular we have
\[
    2\leq d < 1+\frac{n}{2},
\]
and we suppose that \eqref{eq: notation homega} holds, namely
\[
    \homega_n(\xi) > \homega > 2n -d.
\]
Let us fix a sequence of minimal polynomials $(P_i)_{i\geq 0}$ associated to $n$ and $\xi$ as in Section~\ref{section: minimal pol}. We denote by $(Q_i)_{i\geq 0}$ the sequence of irreducible factors given by Proposition~\ref{prop: existence Q_i}. In particular, for each $i\geq 0$ we have
\begin{equation}
    \label{eq: petitesse de Q_i(xi)}
    |Q_i(\xi)| \leq \normH{Q_i}^{-\homega},
\end{equation}
as well as
\begin{align}
    \label{eq: taille de H(Q_i+1)}
    \normH{Q_{i+1}}^\tau \leq \normH{Q_{i}}, \quad \textrm{where } \tau = \frac{\homega\big(\homega -n-d+3\big)}{\omega_n(\xi)\big(\omega_n(\xi)-n-d+3\big)} \in [0,1).
\end{align}
Under the hypothesis that $d$ is not too large, we will prove in the next section that $\omega_n(\xi) < \infty$, and thus $\tau > 0$.
Here, we investigate the following question: can we find ``large'' subfamilies of $(Q_i)_{i\geq 0}$ which are linearly independent and whose elements have ``comparable'' height? More precisely, given two indices $k< i$, can we find an exponent $\theta_j\in(0,1)$ which depends only on $d,n$ and the dimension  $j+1$ of the subspace $\Vect[\bR]{Q_k,Q_{k+1},\dots,Q_i}$ (and not on the indices $i$ and $k$), such that $\normH{Q_i}^{\theta_j} \ll \normH{Q_k}$? We can view it as a generalization of \eqref{eq: taille de H(Q_i+1)}. With this goal in mind, let us introduce some notation.

\begin{Def}
    \label{Def m_n}
    Let $m_n = m_n(\xi) \in [2,n+1]$ be the integer
    \[
        m_n := \lim_{i\rightarrow\infty} \dim(\Vect[\bR]{Q_i,Q_{i+1},\dots}).
    \]
\end{Def}

\noindent\textsl{Remark.} Note that we might have $m_n < n+1$, since, unlike the dual setting of simultaneous approximation (see \cite[Eq. (5.3)]{poels2022simultaneous}), it is possible that the sequence $(P_i)_{i\geq j}$ is contained in a proper subspace of $\bR[X]_{\leq n}$, see \eg \cite{moshchevitin2007best}. However, we will show later that under the hypothesis $d \asymp n^{1/3}$, we have $m_n\gg n^{1/3}$. The next definition is a dual version of \cite[Definition~5.2]{poels2022simultaneous}. Even though it is not important for our purpose, it is interesting to note that in \cite[Definition~5.2]{poels2022simultaneous}, the spaces $A_j[i]$ are generated by the points $\bx_i,\bx_{i+1},\dots$ coming \textsl{after} the good approximation $\bx_i$, whereas in the present setting we need to consider the points $Q_{i}, Q_{i-1},\dots$ coming \textsl{before} $Q_i$. It does not seem to work well the other way round.

\begin{Def}
    \label{Def: j_0,j_1,sigma_j(i) et Aj[i]}
    Let $j_1 > j_0\geq 0$ be such that
    \[
        \dim \Vect[\bR]{Q_{j_0},Q_{j_0+1},\dots, Q_{j_1}} = \dim \Vect[\bR]{Q_{j_0},Q_{j_0+1},\dots} = m_n.
    \]
    For each $i\geq j_1$ and $j=0,\dots, m_n -2$, we define
    \[
        \sigma_j(i) = k, \quad A_j[i] = \Vect[\bR]{Q_k, Q_{k+1},\dots, Q_i} \AND Y_j(i) = \normH{Q_{k-1}},
    \]
    where $k\in\{j_0+1,\dots,i\}$ is the smallest index such that $\dim \Vect[\bR]{Q_k,\dots, Q_i} = j+1$.
\end{Def}

Proposition~\ref{Cor: V_2n-k =  tout l'espace} implies that
    \begin{equation}
        \label{eq: V_2n-j(A_j[i]) = tout l'espace}
        V_{2n-j}\big(A_j[i]\big) = \bR[X]_{\leq 2n-j} \qquad (j=1,\dots,m_n-2).
    \end{equation}

\begin{Def}
    \label{Def: tau_j}
    Let $\tau\in(0,1)$. We associate to $\tau$ a sequence $(\tau_j)_{0\leq j\leq n/2}$ by setting $\tau_0 = \tau$, and for $j=1,\dots,\lfloor n/2 \rfloor$
    \begin{align*}
        \tau_{j} = \alpha_j\bigg(\tau_{j-1} - \frac{2j-1}{2n-d} \bigg), \quad \textrm{where } \alpha_j = \frac{(2n-d)\tau^2}{(n-2j)\tau+n-j+1}.
    \end{align*}
\end{Def}

The main result of this section is the following. The second part of the proposition, which we will use later to get a lower bound for $m_n$, will be proved thanks to Corollary~\ref{Cor: tau_m_2 < mu} below.

\begin{Prop}
    \label{Prop: prop resumee avec tau_j}
    Let $\tau\in(0,1)$ and let $(\tau_j)_{0\leq j \leq n/2}$ be as in Definition~\ref{Def: tau_j}. Suppose that
    \begin{equation}
    \label{eq: def exposant tau}
        \normH{Q_{i+1}}^\tau \leq \normH{Q_i} \qquad \textrm{for each sufficiently large $i$}.
    \end{equation}
    Then for each large enough $i$, we also have
    \begin{align}
        \label{eq Prop: minoration de Y_j}
        \normH{Q_i}^{\tau_j} \ll Y_j(i) \qquad \textrm{for } j=0,\dots, \min\Big\{\lfloor n/2\rfloor,m_n-2\Big\},
    \end{align}
    with implicit constants which do not depend on $i$ and $j$.
\end{Prop}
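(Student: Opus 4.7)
I argue by induction on $j$. The base case $j=0$ is immediate: from Definition~\ref{Def: j_0,j_1,sigma_j(i) et Aj[i]}, $\sigma_0(i)=i$ and $Y_0(i)=\normH{Q_{i-1}}$, while $\tau_0=\tau$, so \eqref{eq Prop: minoration de Y_j} reduces to the assumption \eqref{eq: def exposant tau}.

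For the inductive step from $j-1$ to $j$, the plan is to set $\sigma=\sigma_j(i)$ and $\sigma'=\sigma_{j-1}(i)$, noting that the minimality of these indices forces $\sigma \leq \sigma'-1$ and $Q_{\sigma'-1}\notin A_{j-1}[i]$. This produces the direct decomposition
\[
    A_j[i] \;=\; A_{j-1}[i]\,\oplus\, \Vect[\bR]{Q_{\sigma'-1}}.
\]
I would then pick a basis $R_1,\dots,R_j$ of $A_{j-1}[i]$ from $\{Q_{\sigma'},\dots,Q_i\}$ and set $R_0=Q_{\sigma'-1}$, so that $(R_0,\dots,R_j)$ is a basis of $A_j[i]$ consisting of pairwise distinct non-constant irreducibles (hence with gcd $1$). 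Proposition~\ref{Cor: V_2n-k =  tout l'espace} then yields $V_{2n-j}(A_j[i]) = \bR[X]_{\leq 2n-j}$.

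Next, I would extract from $\{X^\ell R_r : 0\leq\ell\leq 2n-j-\deg R_r\}$ a basis of $\bR[X]_{\leq 2n-j}$ with multiplicity $\nu_r$ for each $R_r$ (so $\sum_r\nu_r = 2n-j+1$), and form the corresponding $(2n-j+1)\times(2n-j+1)$ integer matrix $M$. Since $|\det M|\geq 1$, and the row-operation trick of Lemma~\ref{lem: estimation classique du resultant} followed by expansion along the modified row yields
\[
    |\det M| \;\ll\; \Big(\max_{r}\frac{|R_r(\xi)|}{\normH{R_r}}\Big)\prod_{r=0}^{j}\normH{R_r}^{\nu_r},
\]
substituting the bounds $|R_r(\xi)|\leq\normH{R_r}^{-\homega}$ from \eqref{eq: petitesse de Q_i(xi)}, the inductive hypothesis $Y_{j-1}(i)\gg\normH{Q_i}^{\tau_{j-1}}$, the iterated form $\normH{Q_\sigma}\leq Y_j(i)^{1/\tau}$ of \eqref{eq: def exposant tau}, and the crude upper bound $\normH{R_r}\leq\normH{Q_i}$ for the remaining $R_r$ produces a linear inequality relating $\log Y_j(i)$, $\log Y_{j-1}(i)$, and $\log\normH{Q_i}$.

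The hard part will be the combinatorial optimization over the multiplicities $(\nu_0,\dots,\nu_j)$, subject to $\sum_r \nu_r = 2n-j+1$ and the degree constraints $\nu_r \leq 2n-j-\deg R_r+1$, so as to extract precisely the exponent $\tau_j = \alpha_j(\tau_{j-1}-(2j-1)/(2n-d))$ of Definition~\ref{Def: tau_j}. The shape of $\alpha_j = (2n-d)\tau^2/((n-2j)\tau + n-j+1)$ suggests an optimum balancing a degree-$n$ polynomial assigned the maximal multiplicity $n-j+1$ against a weighted contribution of $\tau$ coming from the polynomial(s) whose height is controlled by $Y_j(i)^{1/\tau}$, with the term $(2j-1)/(2n-d)$ emerging from the substitution $\homega\sim 2n-d$. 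The careful bookkeeping required to match $\tau_j$ exactly is the most delicate technical step, and one has also to verify that the selection of multiplicities is compatible with an integral basis extraction (which is where the distinction between the subcases $Q_\sigma\in A_{j-1}[i]$ and $Q_\sigma\notin A_{j-1}[i]$ plays a role).
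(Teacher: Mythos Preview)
Your single-determinant approach has a genuine gap that goes beyond the ``combinatorial optimization'' you flag as the hard part. The trouble is not the bookkeeping of the $\nu_r$ but the structure of the inequality itself. After the row operation, your bound
\[
1 \;\ll\; \Big(\max_r \frac{|R_r(\xi)|}{\normH{R_r}}\Big)\prod_r \normH{R_r}^{\nu_r}
\]
is dominated by the term attached to the $R_r$ of \emph{smallest} height. If you include $Q_\sigma$ in your basis (the only way for $Y_j(i)$ to enter, via $\normH{Q_\sigma}\leq Y_j(i)^{1/\tau}$), that term uses $|Q_\sigma(\xi)|/\normH{Q_\sigma}\leq\normH{Q_\sigma}^{-\homega-1}$, and substitution produces an \emph{upper} bound on $Y_j(i)$, not the lower bound you need. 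If instead you exclude $Q_\sigma$ and take all $R_r$ with $\normH{R_r}\geq Y_{j-1}(i)$, then the inequality relates $Y_{j-1}(i)$ to $\normH{Q_i}$ only, and $Y_j(i)$ never appears. Either way the induction step stalls.

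The paper resolves this by a different mechanism. It fixes the coprime pair $P=Q_{\sigma_j(i)}$ and $Q=Q_{\sigma_j(i)+1}$, builds a complement $B$ to $A=A_j[i]$ from many multiples of $P$ and $Q$ (this is where $Y_j(i)$ enters, through $\normH{P}^\tau\leq Y_j(i)$ and $\normH{Q}^{\tau^2}\leq Y_j(i)$), and then applies Schmidt's inequality for the twisted height $\HHstar$ (Proposition~\ref{Prop: Schmidt inegalite pour HHstar}):
\[
|Q_k(\xi)| \asymp \HHstar(A\cap B)\,\HHstar(A+B)\;\leq\;\HHstar(A)\,\HHstar(B).
\]
Here $\HHstar(A)\ll Y_{j-1}(i)^{-\homega}\normH{Q_i}^j$ carries the inductive information (all generators of $A$ have height $\geq Y_{j-1}(i)$), while the estimate for $\HHstar(B)$ contains a factor $|Q_k(\xi)|$ which \emph{cancels} against the left-hand side. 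This cancellation is the key step that a single determinant cannot replicate: it decouples the $Y_{j-1}$-contribution from the $Y_j$-contribution, yielding $Y_{j-1}(i)^{\homega}\ll \normH{P}^{n-2j}\normH{Q}^{n-j+1}\normH{Q_i}^{2j-1}$ and hence the recursion for $\tau_j$ with the factor $\tau^2$ in $\alpha_j$ (coming precisely from using \emph{two} polynomials $P,Q$).
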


\noindent\textsl{Remark.} We will use the exponent $\tau$ given in \eqref{eq: taille de H(Q_i+1)}. We will prove that under suitable conditions, the exponent of best approximation $\omega_n(\xi)$ is not ``too large'', which ensures that $\tau$ is ``close'' to $1$. This question, which is one of the delicate parts of this paper, will be dealt with in Section~\ref{section: estimation de omega_n}.

\medskip

In order to get \eqref{eq Prop: minoration de Y_j}, we will try to adopt a strategy similar to the one of \cite[§5]{poels2022simultaneous} in the setting of simultaneous approximation to the successive powers of $\xi$. New difficulties arise however, for example we need to work with $\HHstar$ instead of the standard height of subspaces (see Section~\ref{section: hauteurs tordues}). Schmidt's inequality \eqref{eq: Schmidt's inequality HHstar} will play a key-role in our proofs. We keep the notation of Definition~\ref{Def: fonction g_A(N)} for the sets $\cB_k(\cA)$ and the subspaces $V_k(\cA) \subset \bR[X]_{\leq k}$.

\begin{proof}
    Without loss of generality, we may suppose that the index $j_0$ is large enough for us to have \eqref{eq: def exposant tau} for each $i\geq j_0-1$. Let us fix $i\geq j_1$, and for simplicity write $m=m_n$ and $Y_k:= Y_k(i)$ for $k=0,\dots,m-2$.

    \medskip

    We prove \eqref{eq Prop: minoration de Y_j} by induction on $j$. If $j=0$, then we have $Y_0 = \normH{Q_{i-1}}$ since $\sigma_0(i) = i$. By \eqref{eq: def exposant tau} applied with $i-1$ instead of $i$, we get $\normH{Q_i}^{\tau_0} \leq Y_0$. Now, let $j\in\{1,\dots,m-2\}$ with $j\leq n/2$ such that \eqref{eq Prop: minoration de Y_j} holds for $j-1$. If $\tau_{j}\leq 0$, then \eqref{eq Prop: minoration de Y_j} holds trivially for $j$. We assume that $\tau_{j} > 0$. In particular, we also have $\tau_{j-1} > 0$. Write $P:= Q_{\sigma_j(i)}$ and $Q:= Q_{\sigma_j(i)+1}$. By \eqref{eq: def exposant tau}, we have
    \begin{align}
        \label{eq proof: contrôle H(P) et H(Q) par Y_j}
        \normH{Q}^{\tau^2} \leq \normH{P}^{\tau} \leq Y_j.
    \end{align}
    Since $P$ and $Q$ are coprime, Lemma~\ref{lem : dim V_n+j(P,Q)} implies that $\dim V_{2n-j}(P,Q)\geq 2(n-j+1)$. Therefore, there exists a family of $2n-3j+1$ linearly independent polynomials
    \[
        \cU_j:=\{U_0,\dots,U_{2n-3j}\} \subset \cB_{2n-j}(P,Q)
    \]
    such that $A_j[i] \cap \Vect[\bR]{\cU_j} = \{0\}$. Note that since $j\leq n/2$, we may choose $\cU_j$ such that it contains at least $n-2j$ polynomials whose height is equal to $\normH{P}$. The remaining $n-j+1$ ones have height~$\leq \normH{Q}$. By \eqref{eq: V_2n-j(A_j[i]) = tout l'espace}, we have $V_{2n-j}\big(A_j[i]\big) = \bR[X]_{\leq 2n-j}$. Therefore, there exists
    \[
        \cV_j:=\{V_1,\dots,V_{j-1}\} \subset \cB_{2n-j}(Q_{\sigma_j(i)},\dots, Q_i)
    \]
    (with the convention $\cV_j = \emptyset$ if $j=1$) such that we have the direct sum
    \[
        A_j[i] \oplus \Vect[\bR]{\cU_j} \oplus \Vect[\bR]{\cV_j} = \bR[X]_{\leq 2n-j}.
    \]
    All the polynomials of $\cV_j$ have height at most $\normH{Q_i}$. Let $k\in\{\sigma_j(i),\dots,i\}$ which maximizes $|Q_k(\xi)|/\normH{Q_k}$ and define
    \[
        A:= A_j[i] \AND B:= \Vect[\bR]{\cU_j \cup \cV_j \cup \{Q_k\}},
    \]
    so that $A + B = \bR[X]_{\leq 2n-j}$ and $A\cap B = \Vect[\bR]{Q_k}$. We will now make a crucial use of the function $\HHstar$ introduced in Definition~\ref{Def: def HHstar} (here, the ambient space is  $\bR[X]_{\leq 2n-j}$, identified to $\bR^{2n-j+1}$ via \eqref{eq: identification poly avec points de R^m}). Recall that
     \[
        \HHstar(A+B) = \HHstar\big(\bR[X]_{\leq 2n-j}\big) = \norm{(1,\xi,\dots,\xi^{2n-j})} \asymp 1,
     \]
     and that according to \eqref{eq: HHstar(P)} the primitive polynomial $Q_k$ satisfies
     \[
        \HHstar(A\cap B) = \HHstar\big(\Vect[\bR]{Q_k}\big) =  |Q_k(\xi)|.
     \]
     Schmidt's inequality \eqref{eq: Schmidt's inequality HHstar} applied with the subspaces $A$ and $B$ yields
    \begin{align}
        \label{eq proof: Schmidt notre construction}
        |Q_{k}(\xi)| \asymp \HHstar(A+B)\HHstar(A\cap B) \leq \HHstar(A) \HHstar(B),
    \end{align}
    the implicit constants depending only on $n$ and $\xi$ (and not on the indices $i,j$). It remains to estimate $\HHstar(A)$ and $\HHstar(B)$. The subspace $B\subset\bR[X]_{\leq 2n-j}$ is generated by the $2n-2j+1$ linearly independent polynomials $\cV = \cU_j \cup \cV_j \cup \{Q_k\}$. Moreover (see the remarks after the constructions of $\cU_j$ and $\cV_j$), we have
    \[
        \prod_{R\in\cV}\normH{R} \leq \normH{P}^{n-2j}\normH{Q}^{n-j+1}\normH{Q_i}^{j-1}\normH{Q_k}.
    \]
    By choice of $k$, for each $R\in\cV$ we also have $|R(\xi)|/\normH{R}\ll |Q_k(\xi)|/\normH{Q_k}$, and Lemma~\ref{lem: estimation des pseudo resultant tordus} combined with the above yields the upper bound
    \begin{align*}
        \HHstar(B) \ll |Q_k(\xi)| \normH{P}^{n-2j}\normH{Q}^{n-j+1}\normH{Q_i}^{j-1}.
    \end{align*}
    The space $A_j[i]\subset\bR[X]_{\leq 2n-j}$ is spanned by a set $\cU$ of $j+1$ linearly polynomials that may be chosen among $Q_{\sigma_{j-1}(i)-1}$,...,$Q_{i-1}$, $Q_i$. For each $R\in\cU$, we have $\normH{R}\leq \normH{Q_i}$ and $|R(\xi)| \leq \normH{R}^{-\homega} \leq Y_{j-1}^{-\homega}$. Combined with Lemma~\ref{lem: estimation des pseudo resultant tordus}, we obtain
    \begin{align*}
        \HHstar(A)  \ll \sum_{R\in\cU}|R(\xi)|\prod_{\substack{S\in\cU \\ S\neq R}}\normH{S} \ll Y_{j-1}^{-\homega}\normH{Q_{i}}^j.
    \end{align*}
    Then, combining the above upper bounds for $\HHstar(B)$ and $\HHstar(A)$ with \eqref{eq proof: Schmidt notre construction} and \eqref{eq proof: contrôle H(P) et H(Q) par Y_j}, we get
    \begin{align*}
        Y_{j-1}^{\homega}  \ll \normH{P}^{n-2j}\normH{Q}^{n-j+1}\normH{Q_i}^{2j-1} & \ll Y_j^{(n-2j)/\tau + (n-j+1)/\tau^2}\normH{Q_i}^{2j-1},
    \end{align*}
    where the implicit constants depend on $n$ and $\xi$ only. Using the induction hypothesis, we also have $\normH{Q_i}^{\homega\,\tau_{j-1}} \ll Y_{j-1}^{\homega}$, hence
    \begin{align*}
        \normH{Q_i}^{\homega\,\tau_{j-1}-2j+1} \ll  Y_j^{(n-2j)/\tau + (n-j+1)/\tau^2} = Y_j^{(2n-d)/\alpha_j}.
    \end{align*}
    Rising each term to the power $\alpha_j/(2n-d)$ and using $\homega > 2n-d$ we easily deduce \eqref{eq Prop: minoration de Y_j} for $j$, which concludes our induction step.
\end{proof}

\begin{Rem}
    We could get an exponent $\tau_j$ a little bit greater in the above proposition (by giving a slightly better estimate of $\HHstar(A)$ is the proof). However, those improvements would only lead to a larger constant $a$ in Theorem~\ref{Thm : main} at best; the term $n^{1/3}$ would remain the same, whereas we are expecting $n^{1/2}$. We preferred to keep the arguments simple.
\end{Rem}

\begin{Prop}
    \label{Prop: majoration Y_m-2}
    Let the hypotheses be as in Proposition~\ref{Prop: prop resumee avec tau_j} and write  $m=m_n$. For any $\lambda < \lambda_n(\xi)$, there are infinitely many indices $i$ such that
    \begin{equation*}
        \label{eq: majoration theta_m-2 eq 0}
        Y_{m-2}(i) \leq \normH{Q_i}^{1/(\homega\lambda\tau)}.
    \end{equation*}
    In particular, there are infinitely many indices $i$ such that
    \begin{equation}
        \label{eq: majoration theta_m-2}
        Y_{m-2}(i) \leq \normH{Q_i}^\mu,\quad \textrm{where } \mu := \frac{n}{(2n-d)\tau}.
    \end{equation}
\end{Prop}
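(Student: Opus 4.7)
The plan is to exploit a good simultaneous rational approximation $\by = (y_0,\dots,y_n)\in\bZ^{n+1}$ to $\Xi_n = (1,\xi,\dots,\xi^n)$ in order to force a long block $Q_{j^*},\dots,Q_i$ of our sequence into a single hyperplane of $\bR^{n+1}$, and then to translate this geometric constraint into the desired estimate on $Y_{m-2}(i) = \normH{Q_{\sigma_{m-2}(i)-1}}$.

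Fix $\lambda < \lambda_n(\xi)$. By definition of $\lambda_n(\xi)$, for arbitrarily large $Y\geq 1$ there exists a non-zero $\by \in \bZ^{n+1}$ with $|y_0|\leq Y$ and $|y_0\xi^k - y_k| \leq Y^{-\lambda}$ for $k=1,\dots,n$. Identifying any $P = \sum_{k=0}^n a_k X^k \in \bZ[X]_{\leq n}$ with its coefficient vector, we write $\psc{\by}{P} = y_0 P(\xi) + \sum_{k=1}^n a_k(y_k - y_0\xi^k)$, which yields
\[
    \big|\psc{\by}{P}\big| \leq Y\,|P(\xi)| + (n+1)\normH{P}\, Y^{-\lambda}.
\]
Specializing to $P = Q_j$ and using $|Q_j(\xi)|\leq \normH{Q_j}^{-\homega}$, we see that the integer $\psc{\by}{Q_j}$ must vanish as soon as $\normH{Q_j}$ lies in a range of the form $(2Y)^{1/\homega} \leq \normH{Q_j} \leq c\,Y^{\lambda}$ for a suitable constant $c>0$; in that case $Q_j$ belongs to the hyperplane $H_\by := \by^\perp \subset \bR^{n+1}$.

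Let $i$ be the largest index with $\normH{Q_i}\leq c\,Y^{\lambda}$. The gap inequality $\normH{Q_{i+1}}^\tau\leq \normH{Q_i}$ from \eqref{eq: taille de H(Q_i+1)} forces $\normH{Q_i}\gg Y^{\lambda\tau}$, so $Y\ll \normH{Q_i}^{1/(\lambda\tau)}$ and the lower threshold satisfies $(2Y)^{1/\homega}\ll \normH{Q_i}^{1/(\homega\lambda\tau)}$. Denote by $j^*$ the smallest index with $\normH{Q_{j^*}}\geq (2Y)^{1/\homega}$; then $\normH{Q_{j^*-1}}\ll \normH{Q_i}^{1/(\homega\lambda\tau)}$ and every $Q_j$ with $j^*\leq j\leq i$ lies in $H_\by$. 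For $Y$ large enough we also have $j^*\geq j_0$, so these $Q_j$ all belong to the $m$-dimensional space $W := \Vect[\bR]{Q_\ell : \ell\geq j_0}$.

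It remains to check that $\by\notin W^\perp$: since $\xi$ is transcendental and $W\neq\{0\}$, we have $\Xi_n\notin W^\perp$; and $y_0\neq 0$ (otherwise $|y_k|\leq Y^{-\lambda}<1$ would force $\by=0$), so $\by = y_0\Xi_n + \GrO(Y^{-\lambda})$ stays bounded away from $W^\perp$ for $Y$ large. Hence $H_\by\cap W$ is a hyperplane of $W$ of dimension $m-1$, which forces $\dim \Vect[\bR]{Q_{j^*},\dots,Q_i}\leq m-1$. Since $\dim\Vect[\bR]{Q_k,\dots,Q_i}$ is non-increasing in $k$ and first attains the value $m-1$ at $k=\sigma_{m-2}(i)$, we obtain $\sigma_{m-2}(i)\leq j^*$, whence
\[
    Y_{m-2}(i) = \normH{Q_{\sigma_{m-2}(i)-1}} \leq \normH{Q_{j^*-1}} \ll \normH{Q_i}^{1/(\homega\lambda\tau)}.
\]
Letting $Y$ run through an unbounded sequence of admissible values produces infinitely many such indices $i$; the implicit multiplicative constant is absorbed by replacing $\lambda$ with a slightly smaller value in $(0,\lambda_n(\xi))$. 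The ``in particular'' bound \eqref{eq: majoration theta_m-2} follows at once, for Dirichlet's theorem gives $\lambda_n(\xi)\geq 1/n$, so by picking $\lambda$ close enough to $\lambda_n(\xi)$ we can ensure $\homega\lambda\geq (2n-d)/n$, which yields $1/(\homega\lambda\tau)\leq n/((2n-d)\tau)=\mu$. The delicate step in the whole argument is the verification that $\by\notin W^\perp$, so that $H_\by\cap W$ is a genuine hyperplane of $W$; the rest is essentially bookkeeping around the height window of the simultaneous approximation.
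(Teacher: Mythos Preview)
Your proof is correct and uses the same core idea as the paper: both exploit a good simultaneous approximation $\by$ to force the $Q_j$'s in a suitable height window into the hyperplane $\by^\perp$, and then invoke the transcendence of $\xi$ (the paper argues by contradiction, obtaining $\by\perp V$ for the fixed $m$-dimensional space $V$ and hence $\Xi_n\perp V$; you argue directly, checking $\by\notin W^\perp$ so that $H_\by\cap W$ has codimension one in $W$). One small slip: to absorb the implicit multiplicative constant you must replace $\lambda$ by a slightly \emph{larger} value in $(\lambda,\lambda_n(\xi))$, not a smaller one, since the exponent $1/(\homega\lambda\tau)$ is decreasing in $\lambda$.
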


\begin{proof}
    By definition of $m$, the subspace
    \begin{equation}
        \label{eq proof: def V}
        V = \Vect[\bR]{Q_{\sigma_{m-2}(i)-1},Q_{\sigma_{m-2}(i)},\dots,Q_{i}}
    \end{equation}
    of $\bR[X]_{\leq n}$, does not depend on $i$, for each $i\geq j_1$, where $j_1$ comes from Definition~\ref{Def: j_0,j_1,sigma_j(i) et Aj[i]}. It has dimension $m$ since $\dim A_{m-2}[i] = m-1$ and $Q_{\sigma_{m-2}(i)-1}\notin A_{m-2}[i]$. Fix two positive real numbers $\alpha,\lambda$ with $\lambda < \alpha < \lambda_n(\xi)$, and suppose by contradiction that there exists an index $i_0\geq j_1$ such that for each $i \geq i_0$
    \begin{equation}
        \label{eq proof: def theta}
        Y_{m-2}(i) \geq \normH{Q_i}^\theta, \quad \textrm{where } \theta=\frac{1}{\homega\lambda\tau}.
    \end{equation}
    By hypothesis, we can also assume that $\normH{Q_{i+1}}^\tau \leq \normH{Q_i}$ for each $i\geq i_0$. Identifying $\bR[X]_{\leq n}$ to $\bR^{n+1}$ via the isomorphism \eqref{eq: identification poly avec points de R^m}, we claim that the point $\Xi=(1,\xi,\xi^2,\dots,\xi^n)$ is orthogonal to $V$, with respect to the standard scalar product $\psc{\cdot}{\cdot}$ of $\bR^{n+1}$.

    \medskip

     By definition of $\lambda_n(\xi)$ there exist infinitely many non-zero $\by=(y_0,\dots,y_n)\in\bZ^{n+1}$ satisfying
    \begin{equation*}
         L(\by) =\max_{1\le k \le n} |y_0\xi^k - y_k| \leq Y^{-\alpha},\quad \textrm{where } Y = \normH{\by} = \max_{1\leq k\leq n} |y_k|.
    \end{equation*}
    Let $(\by_i)_{i\geq 0}$ be an unbounded sequence of such points ordered by increasing norm.  This sequence converges projectively to $\Xi = (1,\xi,\xi^2,\dots,\xi^n)$. Without loss of generality, we may assume that $(\normH{\by_0})^{\alpha} > 2(n+1)\normH{Q_{i_0}}$. Fix an index $j$ arbitrarily large. For simplicity, set $\by:= \by_j$ and $Y = \normH{\by_j}$. There exists an index $i\geq i_0$ such that
    \begin{align}
    \label{eq proof: sur l'exposant mu eq1}
         \normH{Q_i} < \frac{Y^{\alpha}}{2(n+1)} \leq \normH{Q_{i+1}} \leq \normH{Q_i}^{1/\tau}.
    \end{align}
    Note that $i$ tends to infinity as $j$ tends to infinity.  Let $k\in\{\sigma_{m-2}(i)-1,\dots,i\}$. The polynomial $Q:=Q_k$ is identified with an integer point $\bz\in\bZ^{n+1}$ such that $Q(\xi)= \psc{\bz}{\Xi}$. Since $\psc{\bz}{\by} = \psc{\bz}{\by -y_0\Xi} + y_0\psc{\bz}{\Xi}$, we get
    \begin{align*}
        |\psc{\bz}{\by}| \leq (n+1)\normH{Q}L(\by) + Y |Q(\xi)|
    \end{align*}
    (this argument is similar to the one used by Laurent in the proof of \cite[Lemma~5]{laurent2003simultaneous}). Our hypothesis \eqref{eq proof: def theta} yields
    \begin{align*}
        \normH{Q_i}^\theta \leq Y_{m-2}(i)  \leq \normH{Q} \leq \normH{Q_i}.
    \end{align*}
    Using \eqref{eq proof: sur l'exposant mu eq1} together with $L(\by)\leq Y^{-\alpha}$, we get
    \begin{align*}
        (n+1)\normH{Q}L(\by) <\frac{1}{2}.
    \end{align*}
    Moreover, \eqref{eq proof: sur l'exposant mu eq1} also yields $ Y^{1/\homega} \ll \normH{Q_i}^{1/(\homega\alpha\tau)} = \normH{Q_i}^{\theta \lambda / \alpha}$, where the implicit constant only depends on $n$. Since $\lambda < \alpha$, we may choose $j$ so large that $(2Y)^{1/\homega} < \normH{Q_i}^\theta$. Combining this with the estimate $|Q(\xi)|\leq \normH{Q}^{-\homega}$ from \eqref{eq: petitesse de Q_i(xi)}, we also get
    \[
        Y|Q(\xi)|\leq Y\normH{Q}^{-\homega} \leq Y\normH{Q_i}^{-\theta\,\homega} < \frac{1}{2}.
    \]
    We conclude that the integer $|\psc{\bz}{\by}|$ is (strictly) less that $1$. It is thus equal to $0$, and so $\by$ and $\bz$ are orthogonal. By letting $\bz$ vary, this implies that $\by = \by_j$ is orthogonal to the subspace $V$. Since this is true for all sufficiently large $j$, it follows that the (projective) limit $\Xi$ is also orthogonal to $V$. This proves our claim and provides the required contradiction since no $Q_i$ vanishes at the transcendental number $\xi$.  Thus, \eqref{eq proof: def theta} does not hold for arbitrarily large indices $i$. Estimate \eqref{eq: majoration theta_m-2} follows by noticing that $\lambda_n(\xi)\geq 1/n$ by Dirichlet's theorem, and $\homega > 2n-d$. We may therefore choose $\lambda < \lambda_n(\xi)$ so that $\lambda\homega > (2n-d)/n$.
\end{proof}

\begin{Cor}
    \label{Cor: tau_m_2 < mu}
    Under the same hypotheses, suppose moreover that $m=m_n$ satisfies $m-2\leq n/2$, and let $(\tau_j)_{0\leq j \leq n/2}$ be as in Definition~\ref{Def: tau_j}. Then, we have
    \[
        \tau_{m-2} \leq \mu = \frac{n}{(2n-d)\tau}.
    \]
\end{Cor}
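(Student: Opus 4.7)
The plan is to derive the corollary by simply combining the two preceding propositions at the index $j = m-2$, which is allowed thanks to the extra assumption $m-2 \leq n/2$.

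First, I would invoke Proposition~\ref{Prop: prop resumee avec tau_j} with $j = m-2$. Since by hypothesis $m - 2 \leq \min\{\lfloor n/2\rfloor, m_n - 2\}$, the conclusion of that proposition gives, for every sufficiently large index $i$,
\begin{equation*}
    \normH{Q_i}^{\tau_{m-2}} \ll Y_{m-2}(i),
\end{equation*}
with an implicit constant independent of $i$. Next, I would apply Proposition~\ref{Prop: majoration Y_m-2}, which furnishes infinitely many indices $i$ satisfying
\begin{equation*}
    Y_{m-2}(i) \leq \normH{Q_i}^{\mu}, \quad \textrm{where } \mu = \frac{n}{(2n-d)\tau}.
\end{equation*}

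Combining the two inequalities, we obtain infinitely many indices $i$ for which
\begin{equation*}
    \normH{Q_i}^{\tau_{m-2}} \ll \normH{Q_i}^{\mu}.
\end{equation*}
Since the sequence $\big(\normH{Q_i}\big)_{i\geq 0}$ is unbounded (by Proposition~\ref{prop: existence Q_i}), we may take logarithms, divide by $\log \normH{Q_i}$ and let $i$ tend to infinity along such a subsequence. The additive constant disappears in the limit, and we conclude $\tau_{m-2} \leq \mu$, as desired.

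There is no real obstacle here: everything that is hard is already absorbed into the two propositions. The only point that needs a quick verification is that $j = m-2$ is a legitimate value in Proposition~\ref{Prop: prop resumee avec tau_j}; this is precisely guaranteed by the extra assumption $m - 2 \leq n/2$ (and trivially $m - 2 \leq m_n - 2$), so the argument is just a clean compatibility check between the two previous results.
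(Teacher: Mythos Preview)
Your proof is correct and follows exactly the same approach as the paper: combine Proposition~\ref{Prop: prop resumee avec tau_j} and Proposition~\ref{Prop: majoration Y_m-2} at $j=m-2$ to obtain $\normH{Q_i}^{\tau_{m-2}} \ll Y_{m-2}(i) \leq \normH{Q_i}^{\mu}$ for infinitely many $i$, and let $\normH{Q_i}\to\infty$. Your explicit check that $j=m-2$ lies in the admissible range of Proposition~\ref{Prop: prop resumee avec tau_j} is a welcome clarification that the paper leaves implicit.
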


\begin{proof}
    By Proposition~\ref{Prop: majoration Y_m-2} combined with Proposition~\ref{Prop: prop resumee avec tau_j} there are infinitely many indices $i$ for which $\normH{Q_i}^{\tau_{m-2}} \ll Y_{m-2}(i) \leq \normH{Q_i}^\mu$. Since $\normH{Q_i}$ tends to with $i$, we deduce that $\tau_{m-2}\leq \mu$.
\end{proof}

\section{Upper bound on the exponent of best approximation}
\label{section: estimation de omega_n}

Our goal is the following result, which we will prove at the end of this section.

\begin{Prop}
    \label{Prop: estimation omega_n(xi)}
    Suppose that $\homega_n(\xi) > 2n-d$, with an integer $d\in\bN$ satisfying $\displaystyle 2 \leq d\leq \sqrt[3]{n/4}$. Then, we have the upper bound
    \begin{align*}
        \omega_n(\xi)\leq 2n + P(n,d), \quad\textrm{where } P(n,d) = \frac{n(4d^2-d-5)+8d^2-2d-15}{2n-8d^2+2d+15}.
    \end{align*}
    If moreover we have $d\leq \Big\lceil\sqrt[3]{n/16} \Big\rceil$ and $n > 16$, then
    \begin{align*}
        \omega_n(\xi)\leq 2n + 2d^2.
    \end{align*}
\end{Prop}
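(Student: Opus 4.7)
The plan is a proof by contradiction that exploits the full machinery developed in Sections 5, 6 and 9.

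\textbf{Reduction to the first bound.} The second inequality $\omega_n(\xi) \leq 2n + 2d^2$ will follow from the first by verifying algebraically that $P(n,d) \leq 2d^2$ whenever $d \leq \lceil (n/16)^{1/3}\rceil$ and $n > 16$. Clearing denominators (noting that $2n - 8d^2 + 2d + 15 > 0$ under the standing hypothesis $d \leq (n/4)^{1/3}$), this reduces to checking
\begin{equation*}
    n(d+5) \geq 16d^4 - 4d^3 - 22d^2 - 2d - 15,
\end{equation*}
which is elementary: the assumption $16d^3 \leq n$ (up to the ceiling correction) bounds the dominant term $16 d^4$ by $nd$, and the remaining terms are controlled by $5n$.

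\textbf{Strategy for $\omega_n(\xi) \leq 2n + P(n,d)$.} Assume toward contradiction that $\omega_n(\xi) > 2n + P(n,d)$, fix $\homega \in (2n-d,\homega_n(\xi))$ as in \eqref{eq: notation homega}, and let $(Q_i)_{i\geq 0}$ be the sequence of Proposition~\ref{prop: existence Q_i}. By part \ref{enum: property (Q_i) item 6} of that proposition, $\omega_n(\xi) = \limsup_i \omega(Q_i)$, so we can pick arbitrarily large indices $i$ with $\omega(Q_i)$ arbitrarily close to $\omega_n(\xi)$. For such an $i$, and for an integer $j$ to be optimized in the range $1 \leq j \leq \min\{m_n - 2, \lfloor n/2\rfloor\}$, set $k = \sigma_j(i)$ as in Definition~\ref{Def: j_0,j_1,sigma_j(i) et Aj[i]}. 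The $j+1$ polynomials $Q_k,\dots,Q_i$ are linearly independent in $\bZ[X]_{\leq n}$ and, being distinct irreducibles, generate the unit ideal. Proposition~\ref{Cor: V_2n-k =  tout l'espace} then gives $V_{2n-j}(Q_k,\dots,Q_i) = \bR[X]_{\leq 2n-j}$, so we may extract a basis of the $(2n-j+1)$-dimensional space $\bR[X]_{\leq 2n-j}$ from $\cB_{2n-j}(Q_k,\dots,Q_i)$. The matrix $M$ of this basis in the canonical basis $(1,X,\dots,X^{2n-j})$ has non-zero integer determinant, so $|\det M| \geq 1$. Applying the row-addition device of Lemma~\ref{lem: estimation classique du resultant} yields an upper bound
\begin{equation*}
    1 \leq |\det M| \ll \sum_\ell |Q_\ell(\xi)| \prod_{\ell'} \|Q_{\ell'}\|^{\nu_{\ell'} - \delta_{\ell,\ell'}},
\end{equation*}
where $\nu_\ell$ is the multiplicity of $Q_\ell$ in the basis.

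Isolating the term $\ell = i$ (with $|Q_i(\xi)| = \|Q_i\|^{-\omega(Q_i)}$, $\omega(Q_i)\to\omega_n(\xi)$) from the rest (for which $|Q_\ell(\xi)| \leq \|Q_\ell\|^{-\homega}$ by Proposition~\ref{prop: existence Q_i}\ref{enum: property (Q_i) item 6}), and using the height lower bound $\|Q_\ell\| \geq Y_j(i) \gg \|Q_i\|^{\tau_j}$ from Proposition~\ref{Prop: prop resumee avec tau_j}, one obtains an inequality of the form
\begin{equation*}
    1 \ll \|Q_i\|^{E_1(j,\omega(Q_i))} + \|Q_i\|^{E_2(j,\tau_j,\homega)},
\end{equation*}
where $E_1,E_2$ are explicit linear functions of their arguments. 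Since $\|Q_i\|$ is unbounded, at least one of $E_1,E_2$ must be non-negative in the limit. Substituting the recursive expression of $\tau_j$ from Definition~\ref{Def: tau_j}, expressing $\tau$ as a function of $\omega_n(\xi)$ via \eqref{eq: taille de H(Q_i+1)}, and letting $\omega(Q_i) \to \omega_n(\xi)$ and $\homega \to \homega_n(\xi)$, the value $P(n,d)$ emerges as the exact threshold above which both $E_1,E_2$ are strictly negative for the optimal $j$; this produces the sought contradiction. If instead $m_n$ is too small for the chosen $j$, the bound $\tau_{m_n-2} \leq n/((2n-d)\tau)$ of Corollary~\ref{Cor: tau_m_2 < mu} provides a separate algebraic constraint that, when unwound through the recurrence, delivers the same type of inequality on $\omega_n(\xi)$.

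\textbf{Main obstacles.} The principal difficulty is the algebraic optimization: the recurrence for $\tau_j$ in Definition~\ref{Def: tau_j} does not telescope transparently, and identifying the precise $j$ that produces the sharp threshold $P(n,d)$ requires careful manipulation while respecting the constraint $d \leq (n/4)^{1/3}$ (which is exactly what makes all the denominators in the recurrence and in $P(n,d)$ positive). A secondary subtlety is the split into the two regimes according to the size of $m_n$, since one cannot a priori rule out $m_n$ being small without further work.
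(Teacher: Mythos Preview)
Your approach has a genuine circularity gap. The machinery of Section~\ref{section : indpce lineaire Q_i} that you invoke --- Proposition~\ref{Prop: prop resumee avec tau_j} and Corollary~\ref{Cor: tau_m_2 < mu} --- is driven by a parameter $\tau\in(0,1)$ for which \eqref{eq: taille de H(Q_i+1)} must hold. The only $\tau$ available is the one from Proposition~\ref{prop: existence Q_i}\ref{enum: H(Q_i+1) controle par H(Q_i)}, namely
\[
    \tau = \frac{\homega(\homega - n - d + 3)}{\omega_n(\xi)(\omega_n(\xi) - n - d + 3)},
\]
which depends on $\omega_n(\xi)$ itself. Under your contradiction hypothesis $\omega_n(\xi) > 2n + P(n,d)$ there is no a~priori finite bound on $\omega_n(\xi)$; if $\omega_n(\xi)$ is large (or infinite) then $\tau$ is close to $0$ (or equal to~$0$), the exponents $\tau_j$ of Definition~\ref{Def: tau_j} degenerate, and the lower bounds $Y_j(i)\gg\|Q_i\|^{\tau_j}$ carry no information. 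Your exponent $E_2(j,\tau_j,\homega)$ therefore cannot be shown to be negative, and the contradiction fails to materialize. In the paper, Proposition~\ref{Prop: estimation omega_n(xi)} is precisely what \emph{produces} a usable lower bound for $\tau$ (see the opening of the proof of Theorem~\ref{Thm: main theorem under condition}); it cannot in turn be deduced from the Section~\ref{section : indpce lineaire Q_i} machinery.

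The paper's actual proof is entirely different and self-contained within Section~\ref{section: estimation de omega_n}: it works with a \emph{single} pair of coprime polynomials and the ordinary resultant of Lemma~\ref{lem: estimation classique du resultant}, not a generalized determinant. The point is to find, for each large $i$, a non-zero $P\in\bZ[X]_{\leq n}$ coprime to $Q_i$ with $|P(\xi)|<|Q_i(\xi)|$ and $\|P\|\leq\|Q_i\|^{1+\theta_i(1-\eta)}$ for a carefully chosen $\eta=1/(2d+5/2)$. This is the content of Lemma~\ref{Prop-clef: existence d'un bon point min successeur}, whose proof --- via Lemmas~\ref{lemme: omega(R) petit degre}, \ref{lem: lem general 2} and~\ref{Lem: P = QR grand bonne approx} --- handles the delicate possibility that the natural candidate $P$ is divisible by $Q_i$. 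Once such a $P$ is in hand, the resultant inequality $1\leq|\Res(Q_i,P)|\ll\|Q_i\|^{n-1+n(1+\theta_i(1-\eta))-\omega(Q_i)}$ yields directly the bound $\omega(Q_i)\leq 2n+P(n,d)$, and taking $\limsup$ gives the result. Your treatment of the second bound $P(n,d)\leq 2d^2$ is along the right lines, but note that the paper also handles the ceiling correction for small $\rho=(n/16)^{1/3}$ by separate case analysis.
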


Let $d,n,\xi$ and $\homega$ be as in Sections~\ref{section: sequence des Q_i} and~\ref{section : indpce lineaire Q_i}. We suppose thus that $2\leq d < 1+n/2$ and that \eqref{eq: notation homega} holds, namely
\[
    \homega_n(\xi) > \homega > 2n -d.
\]
Fix a sequence of minimal polynomials $(P_i)_{i\geq 0}$ associated to $n$ and $\xi$ as in Section~\ref{section: minimal pol}. We denote by $(Q_i)_{i\geq 0}$ the sequence of irreducible factors given by Proposition~\ref{prop: existence Q_i}. Unless otherwise stated, all the constants implicit in the symbols  $\ll$, $\gg, \asymp$ and $\GrO(\cdot)$ only depend on $n$, $d$, $\xi$ and $\homega$.

\begin{Rem}
    According to Proposition~\ref{prop: existence Q_i}, we have $\omega_n(\xi) = \limsup_{i\rightarrow\infty}\omega(Q_i)$. Also by \eqref{eq: premiere estimation Pj vs Qi}, we have
\begin{equation}
    \label{eq: rappel theta_i}
    \normH{P_j} \leq \normH{Q_i}^{1+\theta_i}, \quad \textrm{where } \theta_i = \frac{\omega(Q_i)-2n+d}{n-2d+3},
\end{equation}
for each $i\geq 0$ and each $j$ such that $Q_i$ divides $P_j$. Proposition~\ref{Prop: estimation omega_n(xi)} implies that if $d^3$ is small compared to $n$, then $\theta_i = \GrO(d^2/n)$ is also small, and $Q_i$ has ``almost'' the same height as $P_j$.
\end{Rem}
In order to bound from above $\omega_n(\xi)$, it suffices to do so for $\omega(Q_i)$. We could try to use \eqref{eq: rappel theta_i}, which implies that any minimal polynomial of height greater than $\normH{Q_i}^{1+\theta_i}$ is not divisible by $Q_i$. They are thus coprime and we may consider their (non-zero) resultant. However we cannot conclude, as $\theta_i$ is too large. To solve this problem, we need several lemmas. We first start by a few simple observations. A quick computation yields
\begin{equation}
    \label{equation fonctionnelle theta eq 1}
    (1+\theta_i)(2n-d) = \omega(Q_i) + (n+d-3)\theta_i.
\end{equation}
More generally, for each $\eta\geq 0$, 
we have
\begin{equation}
    \label{equation fonctionnelle theta eq 2}
    \big[1+\theta_i(1-\eta) \big](2n-d) = \omega(Q_i) + \big( n+d-3 - \eta(2n-d) \big)\theta_i.
\end{equation}
Under the condition $\eta < (n+d-3)/(2n-d)$, which holds as soon as $\eta < 1/2$, it implies that for each $i\geq 0$, we have
\begin{equation}
    \label{equation: minoration de Q_i(xi) via theta_i}
    |Q_i(\xi)| = \normH{Q_i}^{-\omega(Q_i)} >  \normH{Q_i}^{-\big(1+\theta_i(1-\eta)\big)(2n-d)}.
\end{equation}

\begin{Lem}
    \label{Lem: P = QR grand bonne approx}
    Let $i\geq 0$ and $\eta\in[0,1/2)$, and suppose that $R\in\bZ[X]_{\leq d-2}$ is a non-zero polynomial such that $P := Q_iR$ has degree at most $n$, and $P$ is solution of
    \begin{equation}
        \label{eq : systeme dio pour P = QR}
        \normH{P} \leq H:= \normH{Q_i}^{1+\theta_i(1-\eta)} \AND |P(\xi)| \leq H^{-2n+d}.
    \end{equation}
    Define
    \begin{align*}
        \eta' = \frac{(2n-d)\eta}{n+d-3} \AND \eta'' = \frac{(2n-2d+3)\eta+d-3}{n+d-3}.
    \end{align*}
    Then, we have the following properties.
    \begin{enumerate}[label=(\roman*)]
      \item \label{item: item 1 Lem P = QR sol }
        The polynomial $R$ is non-constant. We have $d\geq 3$ and
            \begin{align}
                \label{eq Prop: P = QR grand bonne approx: R(xi) revisite}
                \normH{R}^{-(n+d-3)} \ll |R(\xi)| \leq \normH{Q}^{-(n+d-3)(1-\eta')\theta_i}.
            \end{align}
      \item \label{item: item 2 Lem P = QR sol }
            There exist a non-constant irreducible polynomial $A\in\bZ[X]_{\leq n}$ and an integer $e \in[1,d-2]$ such that $A^e$ divides $R$,
            \begin{align}
                \label{eq Prop: P = QR grand bonne approx: H(A^e) revisite}
                \normH{A^e} \gg \normH{Q}^{\theta(1-\eta'')} \AND \normH{A^e}^{-(n+d-3)} \ll |A^e(\xi)|.
            \end{align}
        \item If $S\in\bZ[X]_{\leq d-2}$ is non-zero polynomial such that $A$ and $S$ are coprime and $\normH{S}\leq \normH{A^e}$, then
            \begin{align}
                \label{eq Prop: P = QR grand bonne approx: H(S) revisite}
                |S(\xi)| \gg \normH{A^e}^{-(2d-5)}.
            \end{align}
    \end{enumerate}
\end{Lem}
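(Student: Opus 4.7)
For part~(i), the plan is first to show that $R$ is non-constant. If $R$ were a non-zero integer constant, then $|R|\ge 1$ would give $|P(\xi)|=|R||Q_i(\xi)|\ge|Q_i(\xi)|$; this contradicts $|P(\xi)|\le H^{-(2n-d)}=\normH{Q_i}^{-(1+\theta_i(1-\eta))(2n-d)}$ via \eqref{equation: minoration de Q_i(xi) via theta_i}, whose use is legitimate since $\eta<1/2\le(n+d-3)/(2n-d)$ (valid for $d\ge 2$). Non-constancy together with $\deg(R)\le d-2$ then forces $d\ge 3$. The lower bound on $|R(\xi)|$ is the direct application of Lemma~\ref{lemme: omega(R) petit degre}. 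The upper bound will be obtained by dividing $|P(\xi)|\le H^{-(2n-d)}$ by $|Q_i(\xi)|=\normH{Q_i}^{-\omega(Q_i)}$; the resulting exponent on $\normH{Q_i}$ simplifies to $-(n+d-3)(1-\eta')\theta_i$ by~\eqref{equation fonctionnelle theta eq 2} and the definition of $\eta'$.

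For part~(ii), I will factor $R=A_1^{e_1}\cdots A_s^{e_s}$ with $A_1,\dots,A_s\in\bZ[X]$ distinct non-constant irreducible polynomials. These factors are pairwise coprime with total degree at most $d-2$, so Lemma~\ref{lem: lem general 2} (with $m=d-2$) produces an index $k$ such that $|A_k^{e_k}(\xi)|\ll\normH{R}^{d-3}|R(\xi)|$. Setting $A=A_k$ and $e=e_k$ gives $e\in[1,d-2]$, and $|A^e(\xi)|\gg\normH{A^e}^{-(n+d-3)}$ comes from Lemma~\ref{lemme: omega(R) petit degre} applied to $A^e$. For the height lower bound, I will combine $|A^e(\xi)|\ll\normH{R}^{d-3}|R(\xi)|$ with the upper bound on $|R(\xi)|$ from~(i) and with Gelfond's estimate $\normH{R}\asymp\normH{P}/\normH{Q_i}\le\normH{Q_i}^{\theta_i(1-\eta)}$; the definition of $\eta''$ is calibrated precisely so that the resulting exponent on $\normH{Q_i}$ is $\theta_i(1-\eta'')$.

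For part~(iii), since $A$ is irreducible and coprime to $S$, $A^e$ and $S$ are also coprime, and $\Res(A^e,S)$ is a non-zero integer. Lemma~\ref{lem: estimation classique du resultant} combined with $\deg(A^e),\deg(S)\le d-2$ and $\normH{S}\le\normH{A^e}$ yields $1\ll\normH{A^e}^{2d-5}\max\bigl\{|S(\xi)|,|A^e(\xi)|\bigr\}$, and the conclusion is immediate if the maximum is $|S(\xi)|$. The main obstacle is therefore to rule out the alternative. For this I will substitute the upper bound $|A^e(\xi)|\ll\normH{Q_i}^{\theta_i(-n+(2n-2d+3)\eta)}$ from the chain of estimates in the proof of~(ii) together with $\normH{A^e}\le\normH{R}\ll\normH{Q_i}^{\theta_i(1-\eta)}$; if the maximum were $|A^e(\xi)|$ one would obtain $1\ll\normH{Q_i}^{\theta_i[2d-5-n+2(n-2d+4)\eta]}$, whose exponent is strictly negative exactly when $\eta<1/2+1/(2(n-2d+4))$. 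Since this is ensured by the hypothesis $\eta<1/2$, the contradiction appears for $i$ large, forcing the maximum to be $|S(\xi)|$ and yielding~(iii).
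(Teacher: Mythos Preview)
Your argument is correct and follows essentially the same route as the paper's proof: the same use of \eqref{equation fonctionnelle theta eq 2}/\eqref{equation: minoration de Q_i(xi) via theta_i} for part~(i), the same application of Lemma~\ref{lem: lem general 2} to the prime-power factors of $R$ for part~(ii), and the same resultant estimate for part~(iii), with the key step being to show $\normH{A^e}^{2d-5}|A^e(\xi)|\to 0$ via the exponent computation you carried out. Two minor points to tidy up: in~(iii) you need to dispose of the case where $S$ is constant separately (then $|S(\xi)|\ge 1$ and the claim is trivial, since the resultant argument requires $\deg S\ge 1$), and the inequality $\normH{A^e}\le\normH{R}$ should be $\normH{A^e}\ll\normH{R}$ via Gelfond's lemma, which is all you actually use.
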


\begin{proof}
    Fix $i\geq 0$. For simplicity, write $Q:= Q_i$ and $\theta = \theta_i$. By Gelfond's Lemma, we have
    \begin{align*}
        \normH{Q}\normH{R}\asymp \normH{QR} = \normH{P} \leq \normH{Q}^{1+\theta(1-\eta)},
    \end{align*}
    so that
    \begin{equation}
        \label{eq proof: H(R) << fonction de H(Q)}
        \normH{R}\ll \normH{Q}^{\theta(1-\eta)}.
    \end{equation}
    The first inequality of \eqref{eq Prop: P = QR grand bonne approx: R(xi) revisite} and the seconde one of \eqref{eq Prop: P = QR grand bonne approx: H(A^e) revisite} are consequences of Lemma~\ref{lemme: omega(R) petit degre} (using $\deg (A^e) \leq \deg(R) \leq d-2$). Using \eqref{equation fonctionnelle theta eq 2} together with \eqref{eq : systeme dio pour P = QR} and $\normH{Q}^{-\omega(Q)} = |Q(\xi)|$, we find
    \begin{align*}
        |Q(\xi)R(\xi)| = |P(\xi)|  &\leq \normH{Q}^{-\big[1+\theta(1-\eta)\big](2n-d)} =  |Q(\xi)|\normH{Q}^{- \big( n+d-3 - \eta(2n-d) \big)\theta}.
    \end{align*}
    Simplifying by $|Q(\xi)|$ yields the second inequality of \eqref{eq Prop: P = QR grand bonne approx: R(xi) revisite}. In particular we have $|R(\xi)| < 1$ since $\normH{Q} > 1$ (and $\theta > 0$ as well as $\eta'\leq 2\eta < 1$). Consequently $R\in\bZ[X]_{\leq d-2}$ cannot be constant, and thus $d\geq 3$.

    \medskip

    Without loss of generality, we may suppose that $P$ (and thus $R$) is primitive. Let us consider the factorization of $R$ over $\bZ$. There exist an integer $k\geq 1$ and irreducible (non-constant) pairwise distinct polynomials $A_1,\dots,A_k\in\bZ[X]$ such that
    \[
        R = \prod_{j=1}^{k} A_j^{\alpha_j} = \prod_{j=1}^{k} B_j \qquad \textrm{with $B_j:= A_j^{\alpha_j}$ for each $j=1,\dots,k$},
    \]
    and where $\alpha_1,\dots,\alpha_k$ are positive integers. According to Lemma~\ref{lem: lem general 2}, there exists $j\in\{1,\dots,k\}$ such that $B = B_j$ satisfies
    \begin{align*}
        |B(\xi)| \ll \normH{R}^{d-3}|R(\xi)|.
    \end{align*}
    We use \eqref{eq proof: H(R) << fonction de H(Q)} to bound  $\normH{R}$ from above, and the second inequality of~\eqref{eq Prop: P = QR grand bonne approx: R(xi) revisite} to bound $|R(\xi)|$ from above. Then, Lemma~\ref{lemme: omega(R) petit degre} applied to the polynomial $B\in\bZ[X]_{\leq d-2}$ together with the above yields
    \begin{align*}
        \normH{B}^{-(n+d-3)} \ll |B(\xi)| \ll  \normH{R}^{d-3}|R(\xi)| & \ll \normH{Q}^{(d-3)(1-\eta)\theta-(n+d-3)(1-\eta')\theta},
    \end{align*}
    Since by definition of $\eta'$ and $\eta''$ we have
    \begin{align*}
        1-\eta'-\frac{d-3}{n+d-3}(1-\eta) = 1-\eta'',
    \end{align*}
    we deduce that
    \begin{equation}
        \label{eq proof: B(xi) << fonction de H(Q)}
        \normH{B}^{-(n+d-3)} \ll |B(\xi)| \ll \normH{Q}^{-(n+d-3)\theta(1-\eta'')}.
    \end{equation}
    and \eqref{eq Prop: P = QR grand bonne approx: H(A^e) revisite} follows easily upon recalling that $A^e = B$. Now, suppose that $S\in\bZ[X]_{\leq d-2}$ is a non-zero polynomial coprime to $A$ with $\normH{S}\leq \normH{B}$. If $S$ is constant, then \eqref{eq Prop: P = QR grand bonne approx: H(S) revisite} is trivial. We may therefore assume that $S$ has degree at least $1$. Then, the estimate of Lemma~\ref{lem: estimation classique du resultant} yields
    \begin{align}
        1 \leq |\Res(B,S)| & \ll \normH{B}^{d-3}\normH{S}^{d-2}|B(\xi)| + \normH{B}^{d-2}\normH{S}^{d-3}|S(\xi)| \notag \\
        & \ll  \normH{B}^{2d-5}\big(|B(\xi)|+|S(\xi)| \big) \label{eq proof: estimation resultant}
    \end{align}
    (where the implicit constants depend on $\xi$, $n$ and $c$). Recall that $B$ divides $R$, we therefore have $\normH{B}\ll \normH{R}$. Together with \eqref{eq proof: H(R) << fonction de H(Q)}, it gives $\normH{B} \ll \normH{Q}^{\theta(1-\eta)}$. Combining the above with \eqref{eq proof: B(xi) << fonction de H(Q)}, we obtain
    \begin{align*}
        \normH{B}^{2d-5}|B(\xi)| \ll \normH{Q}^{(2d-5)\theta(1-\eta)-(n+d-3)\theta(1-\eta'')}.
    \end{align*}
    On the other hand, using $\eta\leq 1/2$ we get
    \begin{align*}
        (2d-5)(1-\eta)-(n+d-3)(1-\eta'') &= (2n-4d+8)\eta - (n-2d+5) \leq -1.
    \end{align*}
    Since for each large enough $i$, the number $\theta = \theta_i$ is bounded from below by
    \[
        \rho = \frac{\homega-2n+d}{n-2d+3} > 0,
    \]
    it follows that $ \normH{B}^{2d-5}|B(\xi)| \ll \normH{Q}^{-\rho}$ tends to $0$ as $i$ tends to infinity. Consequently, \eqref{eq proof: estimation resultant} becomes
    \begin{align*}
        1 \ll  \normH{B}^{2d-5}|S(\xi)|,
    \end{align*}
    hence \eqref{eq Prop: P = QR grand bonne approx: H(S) revisite}.
\end{proof}

\begin{Lem}
    \label{Prop-clef: existence d'un bon point min successeur}
    Let $\eta\in[0,1/2)$. As in Lemma~\ref{Lem: P = QR grand bonne approx}, we set
    \[
        \eta'' = \frac{(2n-2d+3)\eta+d-3}{n+d-3}.
    \]
    Suppose that either $d=2$, or $d\geq 3$ and we have $\eta''\in [0,1/2)$ as well as
    \begin{align}
        \label{eq Prop: inegalite pour eta}
        \frac{1-2\eta''}{1-\eta''} \geq 1-\frac{1}{d-2}  +  \frac{2d}{n}.
    \end{align}
    Then for each large enough $i\geq 0$, there exist $Z\in \bR$ with $\normH{Q_i}\leq Z \leq \normH{Q_i}^{1+\theta_i(1-\eta)}$ and a non-zero $P\in\bZ[X]_{\leq n}$, coprime to $Q_i$, which satisfies  $|P(\xi)| < |Q_i(\xi)|$ and  is solution of
    \begin{equation}
        \label{eq Prop: system sol premiere avec Q_i}
        \normH{P} \leq Z \AND |P(\xi)| \leq Z^{-(2n-d)}.
    \end{equation}
\end{Lem}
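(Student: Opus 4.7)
The plan is to produce $P$ directly from the definition of $\homega<\homega_n(\xi)$, then verify coprimality with $Q_i$ either trivially in the case $d=2$ or by a contradiction argument leveraging Lemma~\ref{Lem: P = QR grand bonne approx} when $d\geq 3$.

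First, I would choose $Z$ in the range $[\normH{Q_i},\normH{Q_i}^{1+\theta_i(1-\eta)}]$ so that $Z^{-(2n-d)}<|Q_i(\xi)|$. Since $|Q_i(\xi)|=\normH{Q_i}^{-\omega(Q_i)}$, this amounts to requiring $Z>\normH{Q_i}^{\omega(Q_i)/(2n-d)}$, which by~\eqref{equation fonctionnelle theta eq 1} equals $\normH{Q_i}^{1+\theta_i(n-2d+3)/(2n-d)}$. Compatibility with the upper end of the range then reads $(n-2d+3)/(2n-d)\leq 1-\eta$, i.e. $\eta\leq(n+d-3)/(2n-d)$, which always holds for $d\geq 2$ and $\eta<1/2$. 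The definition of $\homega<\homega_n(\xi)$ furnishes, for each sufficiently large $i$, a non-zero $P\in\bZ[X]_{\leq n}$ with $\normH{P}\leq Z$ and $|P(\xi)|\leq Z^{-\homega}$. Since $\homega>2n-d$, one automatically gets $|P(\xi)|<Z^{-(2n-d)}<|Q_i(\xi)|$, so the condition $|P(\xi)|<|Q_i(\xi)|$ is satisfied at no extra cost.

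For $d=2$, the polynomial $Q_i$ has degree exactly $n$, so any $Q_i$-multiple in $\bZ[X]_{\leq n}$ is of the form $cQ_i$ with $c\in\bZ$; but $P\neq 0$ together with $|P(\xi)|<|Q_i(\xi)|$ forces $c=0$, a contradiction. Hence $Q_i\nmid P$, and by irreducibility of $Q_i$ the polynomial $P$ is coprime to $Q_i$.

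For $d\geq 3$, I would argue by contradiction. Suppose that for every $Z$ in the allowed range every such $P$ satisfies $Q_i\mid P$. Fixing a particular $Z$ with its associated $P$ and writing $P=Q_iR$, Lemma~\ref{Lem: P = QR grand bonne approx} applies with this $Z$ in the role of $H$ and provides an irreducible $A\in\bZ[X]$ and an integer $e\in[1,d-2]$ such that $A^e\mid R$, with $\normH{A^e}\gg\normH{Q_i}^{\theta_i(1-\eta'')}$, and such that
\[
|S(\xi)|\gg \normH{A^e}^{-(2d-5)}
\]
for every non-zero $S\in\bZ[X]_{\leq d-2}$ coprime to $A$ of height at most $\normH{A^e}$. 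The heart of the proof --- and what I expect to be the main obstacle --- is to exhibit such a ``challenger'' $S$ violating this lower bound. For this, I would invoke the defining property of $\homega$ a second time at an auxiliary height $Z'$ in the allowed range (chosen so that $Z'$ is close to $\normH{A^e}^{1/(1-\eta'')}$, placing $Z'$ slightly above $\normH{Q_i}$ yet still within the permitted range) to obtain a second approximation $P'$. The contradiction hypothesis again forces $Q_i\mid P'$, so $P'=Q_iR'$ with $R'\in\bZ[X]_{\leq d-2}$. Extracting from $R'$ an irreducible factor coprime to $A$ --- or, should $R'$ be essentially a power of $A$, deriving a separate inconsistency from the resulting very small value of $|A(\xi)|$ via assertion~(ii) of Lemma~\ref{Lem: P = QR grand bonne approx} --- yields a candidate $S$; the hypothesis~\eqref{eq Prop: inegalite pour eta} is precisely the quantitative condition ensuring that the resulting upper bound on $|S(\xi)|$, obtained through Gelfond's lemma and the estimate $|P'(\xi)|\leq(Z')^{-\homega}$, is strictly smaller than $\normH{A^e}^{-(2d-5)}$, producing the desired contradiction.
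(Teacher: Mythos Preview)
Your overall plan matches the paper's architecture: produce a first $P$ at the top height $X=\normH{Q_i}^{1+\theta_i(1-\eta)}$, and if $Q_i\mid P$ apply Lemma~\ref{Lem: P = QR grand bonne approx} to extract $A^e$, then pass to a second height and seek a contradiction. The $d=2$ case is fine. However, the execution of the contradiction step for $d\ge 3$ has a genuine gap.

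First, your auxiliary height $Z'\approx\normH{A^e}^{1/(1-\eta'')}$ is not in the allowed range: since $\normH{A^e}\ll\normH{Q_i}^{\theta_i(1-\eta)}$ (as $A^e\mid R$), one gets $Z'\ll\normH{Q_i}^{\theta_i(1-\eta)/(1-\eta'')}$, whose exponent is of order $\theta_i$, so $Z'$ is far below $\normH{Q_i}$. More importantly, the heart of the contradiction is not merely producing \emph{some} $S$ coprime to $A$, but controlling the $A$-adic valuation of the new cofactor. Writing $R'=A^f\tS$ with $\gcd(A,\tS)=1$, the lower bound $|\tS(\xi)|\gg\normH{A^e}^{-(2d-5)}$ from part~(iii) and the bound $|A^f(\xi)|=|A^e(\xi)|^{f/e}\gg\normH{A^e}^{-(n+d-3)f/e}$ from part~(ii) combine to give
\[
|R'(\xi)|\gg\normH{A^e}^{-(n+d-3)f/e-(2d-5)};
\]
comparing with the upper bound on $|R'(\xi)|$ from part~(i) contradicts~\eqref{eq Prop: inegalite pour eta} \emph{only if} $f/e\le 1-1/(d-2)$, i.e.\ $f\le e-1$. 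Without this, the case $f\ge e$ (in particular your ``pure power of $A$'' case, where $\tS=\pm1$) yields no contradiction whatsoever --- Lemma~\ref{Lem: P = QR grand bonne approx}(ii) is a \emph{lower} bound on $|A^e(\xi)|$, not an upper one.

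The paper secures $f\le e-1$ by the specific choice $Z$ equal to a small constant multiple of $\normH{Q_iA^e}\asymp\normH{Q_i}\cdot\normH{A^e}$: then $\normH{\tP}\le Z$ forces, via Gelfond's inequality~\eqref{eq: Gelfond's Lemma}, both $\normH{\tS}<\normH{A^e}$ (so part~(iii) applies to $\tS$) and $\normH{Q_iA^f}<e^{-n}\normH{Q_iA^e}$, whence $Q_iA^e\nmid Q_iA^f$, i.e.\ $f\le e-1$. This deliberate calibration of the second height against $\normH{Q_iA^e}$ is the missing ingredient in your sketch; once it is in place, no separate case analysis is needed.
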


\begin{proof}
    Since $\homega_n(\xi) > 2n-d$, there exists $X_0\geq 0$ such that for each $X \geq X_0$ the system
    \begin{equation*}
        \normH{P} \leq X \AND |P(\xi)| \leq X^{-(2n-d)}
    \end{equation*}
    has a non-zero solution $P$ in $\bZ[X]_{\leq n}$. Fix $i\geq 0$ such that $\normH{Q_i}\geq X_0$, and a non-zero solution $P\in\bZ[X]_{\leq n}$ of the above system with $X := \normH{Q_i}^{1+\theta_i(1-\eta)}$. For simplicity, write $Q= Q_i$ and $\theta = \theta_i$. We have $|P(\xi)|\leq X^{-(2n-d)} < |Q(\xi)|$ thanks to \eqref{equation: minoration de Q_i(xi) via theta_i}. If $P$ and $Q$ are coprime, then the conclusion holds with $Z=X$. We may therefore assume that $P$ and $Q$ are not coprime. Then $Q$ divides $P$, and assertion~\ref{item: item 1 Lem P = QR sol } of Lemma~\ref{eq : systeme dio pour P = QR} implies that $d\geq 3$. Let $A\in\bZ[X]_{\leq d-2}$ and $e\in[1,d-2]$ be the non-constant irreducible polynomial and the integer given by Lemma~\ref{Lem: P = QR grand bonne approx}~\ref{item: item 2 Lem P = QR sol }. In particular we have $\deg(A^e)\leq d-2$ and \eqref{eq Prop: P = QR grand bonne approx: H(A^e) revisite} holds. Set $Z:= e^{-n}2\normH{QA^{e}}$, and define $\nu$  by the relation
    \[
        Z = \normH{Q}^{1+\theta(1-\nu)}.
    \]
    By Gelfond's Lemma and by definition of $Z$ and $\nu$, we have
    \begin{align*}
        \normH{Q}^{\theta(1-\nu)} \asymp \normH{A^e}  \gg \normH{Q}^{\theta(1-\eta'')},
    \end{align*}
    the last inequality coming from \eqref{eq Prop: P = QR grand bonne approx: H(A^e) revisite}. We deduce that $\nu \leq \eta'' + \GrO\big(1/\log \normH{Q}\big)$. Since $\eta'' < 1/2$ we may assume $i$ large enough so that $\nu < 1/2$. On the other hand, since $QA^e$ divides $P$, by \eqref{eq: Gelfond's Lemma}, we have
    \[
        Z < e^{-n}\normH{QA^e} \leq \normH{P} \leq X = \normH{Q}^{1+\theta(1-\eta)},
    \]
    hence $\nu\geq \eta$. We now consider a non-zero solution $\tP\in\bZ[X]_{\leq n}$ of the system
    \begin{equation}
    \label{eq proof: system avec Y}
        \normH{\tP} \leq Z \AND |\tP(\xi)| \leq Z^{-(2n-d)}.
    \end{equation}
    We claim that $\tP$ and $Q$ are coprime. Suppose by contradiction that $Q$ divides $\tP$. There exists $\tR\in\bZ[X]$ such that $\tP = Q\tR$. Write $\tR = A^f\tS$, with $f\in\bN$ and $\tS\in\bZ[X]_{\leq d-2}$ coprime to $A$. By \eqref{eq: Gelfond's Lemma} and by definition of $Z$, and since $Q$ and $\tS$ divide $\tP$, we obtain
    \[
        \normH{Q}\normH{\tS} < e^{n}\normH{\tP} \leq e^nZ = e^{-n} \normH{QA^e} < \normH{Q}\normH{A^e}.
    \]
    We deduce that $\normH{\tS}\leq \normH{A^e}$. Similarly,
    \[
        \normH{QA^f} < e^n\normH{\tP} \leq e^nZ = e^{-n} \normH{QA^e}.
    \]
    Consequently, the polynomial $QA^e$ cannot be a factor of $QA^f$ (by \eqref{eq: Gelfond's Lemma} once again). Thus $f\leq e-1$. Since $\normH{\tS}\leq \normH{A^e}$, the last assertion of Lemma~\ref{Lem: P = QR grand bonne approx} yields
    \begin{align}
        \label{eq proof : estimation de S(xi)}
        |\tS(\xi)| \gg \normH{A^e}^{-(2d-5)}.
    \end{align}
    By hypothesis $\nu < 1/2$, and Lemma~\ref{Lem: P = QR grand bonne approx}~\ref{item: item 1 Lem P = QR sol } applied to the solution $\tP=Q\tR$ of the system \eqref{eq proof: system avec Y} gives the estimate
     \begin{equation}
        \label{eq proof: minoration omega(S)log H(S)}
         |\tR(\xi)| \leq \normH{Q}^{-(n+d-3)(1-\nu')\theta}, \quad \textrm{where } \nu'= \frac{(2n-d)\nu}{n+d-3}.
    \end{equation}
    We now use \eqref{eq proof : estimation de S(xi)} and $|A^e(\xi)| \gg \normH{A^e}^{-(n+d-3)}$ (coming from \eqref{eq Prop: P = QR grand bonne approx: H(A^e) revisite}) together with  $f\leq e-1\leq d-3$. We get the lower bound
    \begin{align*}
        \log|\tR(\xi)| & = \frac{f}{e}\log|A^e(\xi)|+\log|\tS(\xi)| \\
        & \geq -\Big[\Big(1-\frac{1}{e} \Big)(n+d-3) +  2d-5\Big]\log \normH{A^e} + \GrO(1)\\
        & \geq  -\Big[\Big(1-\frac{1}{d-2} \Big)(n+d-3) +  2d-5\Big]\theta(1-\nu)\log \normH{Q} + \GrO(1),
    \end{align*}
    the last inequality following from $\normH{A^e} \asymp \normH{Q}^{\theta(1-\nu)}$. Comparing this with \eqref{eq proof: minoration omega(S)log H(S)} and noting that $\nu'\leq 2\nu$, we obtain
    \begin{align*}
        \frac{1-2\nu}{1-\nu} \leq \frac{1-\nu'}{1-\nu} \leq 1-\frac{1}{d-2}  +  \frac{2d-5}{n+d-3} + \GrO\big(1/\log \normH{Q}\big).
    \end{align*}
    The function $ \nu \mapsto (1-2\nu)/(1-\nu)$ is decreasing on $[0,1/2]$. Using the estimate $\nu \leq \eta'' + \GrO\big(1/\log \normH{Q}\big)$, we obtain
    \begin{align*}
        \frac{1-2\eta''}{1-\eta''} \leq 1-\frac{1}{d-2}  +  \frac{2d-5}{n+d-3} + \GrO\big(1/\log \normH{Q}\big).
    \end{align*}
    Since $(2d-5)/(n+d-3) < 2d/n$, this contradicts our hypothesis \eqref{eq Prop: inegalite pour eta} when $i$ is sufficiently large. So, if $i$ is large enough, then $\tP$ and $Q$ are coprime. Finally, the lower bound $|\tP(\xi)|\leq Z^{-(2n-d)} < |Q(\xi)|$ follows from \eqref{equation: minoration de Q_i(xi) via theta_i} with $\eta$ replaced by $\nu$ (since $\nu < 1/2$), by a similar argument as in the beginning of the proof.
\end{proof}


\begin{proof}[Proof of Proposition~\ref{Prop: estimation omega_n(xi)}]
    The condition $d\leq \sqrt[3]{n/4}$ implies that $d\leq 1+n/2$. Set $\eta = 1/(2d+5/2)$. Note that the upper bound $2n+P(n,d)$ is not optimal in Proposition~\ref{Prop: estimation omega_n(xi)} (and could be slightly improved by choosing the parameter $\eta$ closer to $1/(2d)$). Suppose that $d\geq 3$ and define
    \begin{align*}
        \eta'' =  \frac{(2n-2d+3)\eta+d-3}{n+d-3} \AND \nu = \frac{1}{d+1}.
    \end{align*}
    A direct computation yields
    \begin{align*}
        \eta'' - \nu = \frac{-n+4d^3-11d^2-13d+6}{(4d+5)(n+d-3)(d+1)} < 0,
    \end{align*}
    so that $\eta'' < \nu \leq 1/3$. Since $x\mapsto (1-2x)/(1-x)$ is decreasing on $[0,1/2]$, we deduce that $(1-2\eta'')/(1-\eta'') \geq (1-2\nu)/(1-\nu)$. On the other hand, if $d\geq 3$ we have
    \begin{align*}
        \frac{1-2\nu}{1-\nu} - \Big( 1-\frac{1}{d-2}  +  \frac{2d}{n}\Big) = \frac{2(n-d^3+2d^2)}{nd(d-2)} \geq 0.
    \end{align*}
    The conditions of Lemma~\ref{Prop-clef: existence d'un bon point min successeur} are therefore fulfilled for $d\geq 2$ (they are automatic for d=2). Consequently, for each large enough $i$ there exists a non-zero polynomial $P\in\bZ[X]_{\leq n}$ coprime with $Q_i$, satisfying
    \[
        |P(\xi)| \leq |Q_i(\xi)| < 1 \AND \normH{P}\leq \normH{Q_i}^{1+\theta(1-\eta)}.
    \]
    Such a polynomial is non-constant, and Lemma~\ref{lem: estimation classique du resultant} yields
    \begin{align*}
        1 \leq |\Res(Q_i,P)| &\ll \normH{Q_i}^{n-1}\normH{P}^{n}|Q_i(\xi)| + \normH{Q_i}^{n}\normH{P}^{n-1}|P(\xi)| \\
        & \ll \normH{Q_i}^{n-1+n(1+\theta(1-\eta))-\omega(Q_i)}.
    \end{align*}
    As $\normH{Q_i}$ tends to infinity, it follows that
    \begin{align*}
        n-1+n(1+\theta(1-\eta))-\omega(Q_i) \geq \GrO\Big(1/\log \normH{Q_i} \Big).
    \end{align*}
    Using the definition \eqref{eq: rappel theta_i} of $\theta_i$, a direct computation leads us to the estimate
    \begin{align*}
        (n\eta-2d+3)\omega(Q_i) \leq 2\eta n^2-(3d+\eta d-5)n+2d-3 + \GrO\Big(1/\log \normH{Q_i} \Big).
    \end{align*}
    The hypothesis $d\leq \sqrt[3]{n/4}$ implies $n\eta-2d+3 > 0$. Thus, after simplification
    \begin{align*}
        \omega(Q_i) + \GrO\Big(1/\log \normH{Q_i} \Big)  &\leq \frac{2\eta n^2-(3d+\eta d-5)n+2d-3}{n\eta-2d+3} \\
                    & \; = 2n + \frac{n(d-1-\eta d)+2d-3}{n\eta-2d+3} = 2n + P(n,d),
    \end{align*}
    where $P(n,d)$ is defined as in the statement of Proposition~\ref{Prop: estimation omega_n(xi)} (and $\eta=1/(2d+5/2)$). We conclude that
    \[
        \omega_n(\xi) = \limsup_{i\rightarrow\infty} \omega(Q_i) \leq 2n+P(n,d).
    \]
    Set $Q(n,d) = (2n-8d^2+2d+15)(P(n,d)-2d^2)$. A direct computation yields
    \begin{align*}
        Q(n,d) = -n(d+5)+16d^4-4d^3-22d^2-2d-15.
    \end{align*}
    If $\displaystyle d\leq \sqrt[3]{n/16}$ we have $16d^4 \leq nd$, and therefore $Q(n,d)\leq 0$. We obtain $P(n,d)\leq 2d^2$, and consequently $\homega_n(\xi)\leq 2n+2d^2$. It remains to show that in the case $n\geq 17$ and $d= \lceil \rho \rceil$, where
    \[
        \rho = \sqrt[3]{n/16},
    \]
    we still have $Q(n,d)\leq 0$. If $17 \leq n \leq 128$, or equivalently if $1 < \rho \leq 2$, then we have $d=2$ and $Q(n,2) = -7n+117 \leq 0$. The same reasoning leads to $Q(n,d) \leq 0$ for $2 < \rho \leq 3$ and $3 < \rho \leq 4$. We now suppose that $\rho > 4$. Writing $d = \rho + t$, with $t\in [0,1]$, and using the fact that $16\rho^3 = n$, we find
    \begin{align*}
        Q(n,d) \leq -n(d+5)+16d^4 & = - 16\rho^3(\rho+t+5) + 16 \big(\rho^4 + 4t\rho^3 + 6t^2\rho^2 + 4t^3\rho +t^4 \big) \\
        & = 16\rho^3(3t-5) + 16 \big(6t^2\rho^2 + 4t^3\rho +t^4 \big) \leq 16R(\rho),
    \end{align*}
    where $R(x) = -2x^3+6x^2+4x+1$. As the coefficients of $R(x+4)$ are all negative, we have $R(x)\leq 0$ for each $x\geq 4$. In particular, $R(\rho)\leq 0$, and we once again obtain $Q(n,d)\leq 0$.
\end{proof}

\section{Proof of the main theorem}
\label{section: proof of main thm}

In this last section we prove our main Theorem~\ref{Thm : main} in the following stronger form.

\begin{Thm}
    \label{Thm : main avec meilleure cst}
    Let $\ee = 0.3748\cdots$ be the unique (positive) solution of the equation $(1+x)e^x = 2$ and set $a = \big(2\ee(2-e^\ee)/9\big)^{1/3}  =  0.3567 \cdots$. There exists an explicit constant $C>0$ such that, for each $n\geq 1$ and any transcendental real number $\xi\in\bR$, we have
    \begin{align*}
        \homega_n(\xi) \leq 2n- a n^{1/3} + C.
    \end{align*}
\end{Thm}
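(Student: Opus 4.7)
The plan is to argue by contradiction. Suppose that $\homega_n(\xi) > 2n - an^{1/3} + C$ for constants $a, C > 0$ to be chosen, and set $d := \lceil an^{1/3}\rceil$; for $n$ large this forces $\homega_n(\xi) > 2n - d$ while keeping $d \leq \lceil \sqrt[3]{n/16}\rceil$. Fix $\homega$ with $2n - d < \homega < \homega_n(\xi)$, the sequence $(Q_i)_{i \geq 0}$ from Proposition~\ref{prop: existence Q_i}, and the integer $m := m_n(\xi)$ of Definition~\ref{Def m_n}. Proposition~\ref{Prop: estimation omega_n(xi)} yields $\omega_n(\xi) \leq 2n + 2d^2$, so the exponent $\tau$ of~\eqref{eq: taille de H(Q_i+1)} satisfies $\tau = 1 - \GrO(d^2/n)$. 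An induction on the recursion of Definition~\ref{Def: tau_j} then produces the expansion $\tau_j = 1 - j^2/(2n) + \GrO\!\big((j^3 + d^2 j)/n^2\big)$ throughout the range $1 \leq j \leq \sqrt n$, and comparing this with Corollary~\ref{Cor: tau_m_2 < mu} (which forces $\tau_{m-2} \leq \mu \leq 1/2 + \GrO(d^2/n)$) gives $m \gg \sqrt n$. In particular, we have ample freedom to select an integer $j \asymp n^{1/3}$ with $j \leq m - 2$.

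Fix such a $j$, fix a large index $i$, and extract from $Q_{\sigma_j(i)-1}, Q_{\sigma_j(i)}, \ldots, Q_i$ a linearly independent family $\cA = \{R_0, R_1, \ldots, R_{j+1}\}$ of $j + 2$ polynomials; by Proposition~\ref{prop: existence Q_i}\ref{enum: property (Q_i) item 1} each $R_k$ is irreducible of degree $\geq n - d + 2$, so any two are coprime and the ideal they generate in $\bR[X]$ equals $\bR[X]$. Proposition~\ref{Cor: V_2n-k =  tout l'espace} (applied with $k+1 = j+2$) then yields $V_{2n-j-1}(\cA) = \bR[X]_{\leq 2n-j-1}$, a space of dimension $2n-j$, and we may pick a basis $\{X^{l_s}R_{k_s}\}_{s=1}^{2n-j}$ of it with multiplicities $\nu_k := \#\{s : k_s = k\}$ to be chosen so as to optimize the bound below. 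Let $M$ be the $(2n-j)\times(2n-j)$ integer matrix of this basis in the canonical basis $(1, X, \ldots, X^{2n-j-1})$; then $|\det M| \geq 1$. Applying the row-reduction trick of Lemma~\ref{lem: estimation classique du resultant} (add to the last row the combination $\sum_\ell \xi^{\ell-1}\cdot \mathrm{row}_\ell$) transforms its last row into entries of the form $\pm\xi^? R_{k_s}(\xi)$; expanding along that row and bounding the cofactors by Hadamard gives
\begin{equation*}
1 \;\leq\; |\det M| \;\ll\; \max_{s}\Big( |R_{k_s}(\xi)|\prod_{t\neq s}\normH{R_{k_t}} \Big).
\end{equation*}

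Using $|R_k(\xi)| \leq \normH{R_k}^{-\homega}$, the upper bound $\normH{R_k}\leq \normH{Q_i}$, and the lower bound $Y_j(i) \gg \normH{Q_i}^{\tau_j}$ from Proposition~\ref{Prop: prop resumee avec tau_j} (while optimizing $\nu_0$ subject to $\nu_0 \leq 2n-j-\deg R_0$ and $\deg R_0 \geq n-d+2$), the above inequality translates, upon taking logarithms and letting $i\to\infty$, into a linear constraint of the form
\begin{equation*}
\homega\,\tau_j \;\leq\; (2n-j-1) + (1-\tau_j)\,g(d,j,n) + \GrO(1),
\end{equation*}
where $g$ is explicit and linear in its arguments. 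Inserting the expansion of $\tau_j$ and letting $\homega \to \homega_n(\xi) > 2n-d$ reduces the argument to an asymptotic inequality in $j$ and $d$. Writing $j = tn^{1/3}$ and $d = sn^{1/3}$ and minimizing $s$ subject to this constraint leads, after elementary rearrangements, to a stationarity equation whose solution after an exponential substitution is $(1+x)e^x = 2$; the corresponding optimal value of $s$ is precisely $a = \big(2\ee(2-e^\ee)/9\big)^{1/3}$. Hence any initial choice $a' < a$ produces the desired contradiction for $n$ sufficiently large.

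The main technical obstacle is the careful bookkeeping of all $\GrO(1)$ error terms --- from the $\GrO(d^2/n)$ deviation of $\tau$ from $1$, from the Gelfond factors $e^n$ in height manipulations, and from the discrepancy between the generic $\normH{R_k}$ (for $k\geq 1$) and the extremal value $Y_j(i)$ --- all of which must be shown to be absorbable into the additive constant $C$. A secondary difficulty is constructing the basis of $\bR[X]_{\leq 2n-j-1}$ with the prescribed multiplicities $\nu_k$, which requires a mild refinement of the linear-independence argument of Section~\ref{section: espaces V_N}. The restriction $d = \GrO(n^{1/3})$ imposed by Proposition~\ref{Prop: estimation omega_n(xi)} --- itself the price we pay for working with the irreducible factors $Q_i$ rather than the minimal polynomials $P_i$ --- is exactly what prevents the method from reaching the natural $an^{1/2}$ target mentioned after Theorem~\ref{Thm : main}.
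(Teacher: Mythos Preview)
Your proposal contains a genuine error in the asymptotic analysis of $\tau_j$. You claim that the recursion of Definition~\ref{Def: tau_j} yields
\[
    \tau_j = 1 - \frac{j^2}{2n} + \GrO\!\bigg(\frac{j^3 + d^2 j}{n^2}\bigg),
\]
but this ignores the multiplicative factor $\alpha_k$ in the recursion. A direct computation (as in the paper) gives $\alpha_k = 1 - 9d^2/(2n) + \GrO(n^{-2/3})$ uniformly for $k$ in the range considered, so that
\[
    \tau_j \;=\; \Big(\prod_{k=1}^{j}\alpha_k\Big)\tau - \sum_{k=1}^{j}\Big(\prod_{l=k}^{j}\alpha_l\Big)\frac{2k-1}{2n-d} \;\approx\; e^{-9jd^2/(2n)} + \GrO(n^{-1/3}).
\]
With $j,d\asymp n^{1/3}$ you have $jd^2/n\asymp 1$, hence $\tau_j$ is a \emph{fixed constant strictly less than $1$}, not $1-\GrO(n^{-1/3})$. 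Your stated error term $\GrO((j^3+d^2j)/n^2)=\GrO(n^{-1})$ massively underestimates the deviation, which is of order $1-e^{-\ee}\approx 0.31$. Consequently your deduction $m\gg\sqrt n$ from Corollary~\ref{Cor: tau_m_2 < mu} is also unjustified (the correct conclusion is only $m\gg n^{1/3}$), though this weaker bound still suffices for the construction.

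More importantly, the equation $(1+x)e^x=2$ arises in the paper \emph{precisely} from optimising the exponent $\ee=9jd^2/(2n)$ appearing in $\tau_j\approx e^{-\ee}$: the bound of Theorem~\ref{Thm: main theorem under condition} gives $d_j=2j-1-(j-1)/\tau_j\approx j(2-e^{\ee})$, and equating $d_j\approx d$ with $j=2\ee n/(9d^2)$ yields $d^3=2\ee(2-e^{\ee})n/9$; maximising $\ee(2-e^{\ee})$ then produces the stationarity condition $(1+\ee)e^{\ee}=2$. Your polynomial expansion of $\tau_j$ contains no exponential at all, so your claimed ``exponential substitution'' has no source in your own computation. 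Finally, your determinant is built only from shifts of the $R_k\in\{Q_{\sigma_j(i)-1},\dots,Q_i\}$, whereas the paper's Theorem~\ref{Thm: main theorem under condition} introduces an auxiliary polynomial $P$ coprime to $Q_{\sigma_j(i)}$ with $\normH{P}\le e^{-n}\normH{Q_{\sigma_j(i)}}$; this extra freedom is what produces the clean bound $\homega\le 2n-2j+1+(j-1)/\tau_j$ without the multiplicity bookkeeping you flag as a difficulty.
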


Since $1/3 < a$, it implies Theorem~\ref{Thm : main}. We first establish a preliminary result which uses the following notation. Let $n,d$ be integers with $2\leq d \leq \sqrt{n/4}$. In particular $d\leq 1+n/2$. We define
\[
    \omega(d,n) := 2n+P(n,d), \quad \textrm{where } P(n,d) = \frac{n(4d^2-d-5)+8d^2-2d-15}{2n-8d^2+2d+15},
\]
as well as
\[
    \tau(d,n) = \frac{(2n-d)(n-2d+3)}{\omega(d,n)\big(\omega(d,n)-n-d+3\big)} \AND \mu(d,n):= \frac{n}{(2n-d)\tau}.
\]
Let $\big(\tau_i(d,n)\big)_{0\leq i\leq n/2}$ be the sequence associated to $\tau=\tau(d,n)\in(0,1)$ as in Definition~\ref{Def: tau_j}.

\begin{Thm}
    \label{Thm: main theorem under condition}
    Let $n,d, j$ be non-negative integers with $2\leq d \leq \sqrt{n/4}$ and $1\leq j\leq n/2$. Suppose that
    \begin{align}
        \label{eq: condition sur j}
        \tau_k(d,n) > \mu(d,n) \quad \textrm{for $k=0,\dots,j$}.
    \end{align}
    Then for any transcendental real number $\xi$ we have
    \begin{align}
        \label{eq proof: resultat main a montrer}
         \homega_n(\xi)\leq 2n- \min\big\{d,d_j\big\}, \quad \textrm{where } d_j= 2j-1-\frac{j-1}{\tau_j(d,n)}.
    \end{align}
\end{Thm}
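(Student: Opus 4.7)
The plan is a contradiction argument. I will assume $\homega_n(\xi) > 2n - \min\{d, d_j\}$ and fix $\homega$ strictly between this quantity and $\homega_n(\xi)$. Since $\min\{d, d_j\} \le d$, the inequality $\homega > 2n - d$ holds, which activates the machinery of Sections~\ref{section: sequence des Q_i}, \ref{section: estimation de omega_n}, and \ref{section : indpce lineaire Q_i}: Proposition~\ref{prop: existence Q_i} supplies an irreducible sequence $(Q_i)_{i \ge 0}$; Proposition~\ref{Prop: estimation omega_n(xi)} gives $\omega_n(\xi) \le \omega(d, n)$, so that the actual exponent $\tau$ in \eqref{eq: taille de H(Q_i+1)} is at least $\tau(d, n)$ and the recursively defined $\tau_j$'s are at least the values $\tau_j(d, n)$ (the recursion being monotone in its starting value $\tau$); and the hypothesis $\tau_k(d, n) > \mu(d, n)$ for $k = 0, \dots, j$ combined with Corollary~\ref{Cor: tau_m_2 < mu} forces $m_n \ge j + 2$. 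Since moreover $j \le n/2$, Proposition~\ref{Prop: prop resumee avec tau_j} then gives $Y_j(i) \gg \normH{Q_i}^{\tau_j(d, n)}$ for all sufficiently large $i$.

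For such large $i$, set $k = \sigma_j(i)$ and choose $j + 2$ linearly independent polynomials $Q_{m_0} = Q_{k-1},\, Q_{m_1}, \dots, Q_{m_{j+1}}$ with $k \le m_1 < \cdots < m_{j+1} \le i$, so that $\normH{Q_{m_0}} = Y_j(i)$ is the smallest height and $\normH{Q_{m_r}} \le \normH{Q_i}$ for $r \ge 1$. Being pairwise distinct irreducibles they are pairwise coprime with trivial $\gcd$, and Proposition~\ref{Cor: V_2n-k =  tout l'espace} yields $V_{2n - j - 1}(\{Q_{m_0}, \dots, Q_{m_{j+1}}\}) = \bR[X]_{\le 2n - j - 1}$. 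Extract from the shifts $X^\ell Q_{m_r}$ a basis $(X^{\ell_s} Q_{m_{r_s}})_{1 \le s \le 2n - j}$ of $\bR[X]_{\le 2n - j - 1}$, using $n_r$ shifts of $Q_{m_r}$; the distribution $(n_r)$ is a free parameter, chosen to take $n_0$ as large as the dimension constraints allow. The $(2n - j) \times (2n - j)$ matrix $M$ expressing this basis in the canonical basis $(1, X, \dots, X^{2n - j - 1})$ has integer entries and non-zero determinant, so $|\det M| \ge 1$.

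The key computational step follows the row-operation trick of Lemma~\ref{lem: estimation classique du resultant}: adding $\xi^{r-1}$ times row $r$ to the last row for $r < 2n - j$ replaces that row with the entries $\xi^{\ell_s} Q_{m_{r_s}}(\xi)$. Expanding along it and using $|Q_{m_r}(\xi)| \le \normH{Q_{m_r}}^{-\homega}$ and $\normH{Q_{m_r}} \ge Y_j(i)$ gives
\begin{align*}
|\det M| \ll \Big(\prod_r \normH{Q_{m_r}}^{n_r}\Big) \max_r \frac{|Q_{m_r}(\xi)|}{\normH{Q_{m_r}}} \ll Y_j(i)^{n_0 - 1 - \homega} \normH{Q_i}^{2n - j - n_0},
\end{align*}
the maximum being attained at $r = 0$ and $\prod_r \normH{Q_{m_r}}^{n_r} \le Y_j(i)^{n_0} \normH{Q_i}^{2n - j - n_0}$. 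Combining with $|\det M| \ge 1$ and $Y_j(i) \gg \normH{Q_i}^{\tau_j(d, n)}$, taking logarithms and letting $\normH{Q_i} \to \infty$ forces $\tau_j (\homega + 1 - n_0) \le 2n - j - n_0$ (writing $\tau_j$ for $\tau_j(d, n)$), equivalently $\homega \le \big[2n - j - \tau_j - n_0(1 - \tau_j)\big]/\tau_j$. Choosing $n_0$ near the optimum $2n - 2j + 1 - \tau_j/(1 - \tau_j)$, when this value is compatible with the constraint $n_0 \le 2n - j - \deg(Q_{m_0})$, simplifies the right-hand side to exactly $2n - d_j$, contradicting $\homega > 2n - d_j$ in the subcase $d_j \le d$; in the complementary subcase $d_j > d$, the maximal feasible $n_0$ already yields $\homega \le 2n - d$, again contradicting $\homega > 2n - d$. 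The main obstacle is this optimization step: one must verify that the prescribed $n_0$ shifts of $Q_{m_0}$ can be completed by shifts of $Q_{m_1}, \dots, Q_{m_{j+1}}$ to a genuine basis of $\bR[X]_{\le 2n - j - 1}$, a piece of combinatorial bookkeeping that rests on the concavity behind Proposition~\ref{Cor: V_2n-k =  tout l'espace}.
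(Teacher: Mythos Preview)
Your setup through the first paragraph is sound: the reduction to $\homega>2n-d$, the appeal to Proposition~\ref{Prop: estimation omega_n(xi)} to bound $\omega_n(\xi)$, the application of Proposition~\ref{Prop: prop resumee avec tau_j} with $\tau=\tau(d,n)$, and the use of Corollary~\ref{Cor: tau_m_2 < mu} to force $m_n-2>j$ are all correct. The determinant construction and the row-operation estimate are also fine. The gap is in the final optimization.

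You need $n_0$ close to $2n-2j+1-\tau_j/(1-\tau_j)\approx 2n-2j$ in order to reach $\homega\le 2n-d_j$. But $Q_{m_0}=Q_{k-1}$ has degree at least $n-d+2$ by Proposition~\ref{prop: existence Q_i}\ref{enum: property (Q_i) item 1}, so in $\bR[X]_{\le 2n-j-1}$ it admits at most $n+d-j-2$ shifts. Since $d\le\sqrt{n/4}$ and $j\le n/2$, we have $n+d-j-2<2n-2j$ always: your optimum is unreachable by a margin of roughly $n-j$. With the maximal feasible $n_0$ your bound becomes $\homega\le \deg(Q_{m_0})/\tau_j+2n-j-\deg(Q_{m_0})-1$, which for $\deg(Q_{m_0})=n$ reads $n/\tau_j+n-j-1$; this exceeds $2n-d$ unless $\tau_j\ge n/(n+j+1-d)$, a condition far stronger than the hypothesis $\tau_j>\mu(d,n)\approx 1/2$. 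So neither of your two subcases closes.

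The paper's proof avoids this obstruction by not relying on shifts of a \emph{single} small-height polynomial. Instead it works in $\bR[X]_{\le 2n-j}$, sets $Q=Q_{\sigma_j(i)}$, and manufactures an auxiliary $P\in\bZ[X]_{\le n}$ with $\normH{P}\le e^{-n}\normH{Q}$ and $|P(\xi)|\le\normH{P}^{-\homega}$ directly from the definition of $\homega_n(\xi)$; irreducibility of $Q$ makes $P,Q$ coprime, and Lemma~\ref{lem : dim V_n+j(P,Q)} then guarantees $2n-2j+2$ linearly independent shifts of the \emph{pair} $(P,Q)$, all of height $\ll\normH{Q}$. Completing with $j-1$ shifts from $A_j[i]$ (heights $\le\normH{Q_i}$) and using $\normH{Q_i}\ll\normH{Q}^{1/\tau_j}$ gives $1\ll\normH{Q}^{2n-2j+1+(j-1)/\tau_j-\homega}$, hence $\homega\le 2n-d_j$ directly, with no optimization needed. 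The essential missing idea in your attempt is this use of a coprime pair to double the supply of small-height columns.
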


\begin{proof}
    Fix a transcendental real number $\xi$. If $\homega_n(\xi)\leq 2n-d$, then \eqref{eq proof: resultat main a montrer} holds. We now assume that $\homega_n(\xi) > 2n-d$, and we choose a real number $\homega$ such that
    \[
        \homega_n(\xi) > \homega > 2n-d.
    \]
    Let $(P_i)_{i\geq 0}$ denote a sequence of minimal polynomials associated to $n$ and $\xi$ as in Section~\ref{section: minimal pol}. We denote by $(Q_i)_{i\geq 0}$ the sequence of irreducible factors given by Proposition~\ref{prop: existence Q_i}, and denote by
    \[
        m:=m_n(\xi)
    \]
    the dimension of the spaces $\Vect[\bR]{Q_i,Q_{i+1},\dots}$ for each large enough $i$ (as in Definition~\ref{Def m_n}). Proposition~\ref{Prop: estimation omega_n(xi)} yields $\omega_n(\xi)\leq \omega(d,n)$, and by Proposition~\ref{prop: existence Q_i}~\ref{enum: H(Q_i+1) controle par H(Q_i)}, we get, for each large enough $i$,
    \[
        \normH{Q_{i+1}}^{\tau(d,n)} \leq \normH{Q_i}.
    \]
    For simplicity, we write $\tau=\tau(d,n)$ and $\tau_k=\tau_k(d,n)$ for each $k\in\bN$ with $k\leq n/2$. We claim that $j < m-2$. By contradiction, otherwise we have $m-2\leq j\leq n/2$. Then Corollary~\ref{Cor: tau_m_2 < mu} yields $\tau_{m-2} \leq \mu(d,n)$, which contradicts the hypothesis \eqref{eq: condition sur j}. Hence our claim. Let $i\geq 0$. Set $Q = Q_{\sigma_j(i)}$. If $i$ is large enough, there exists a non-zero $P\in\bZ[X]_{\leq n}$ solution of
    \begin{equation*}
        \normH{P} \leq e^{-n} \normH{Q}=: X \AND |P(\xi)| \leq X^{-\homega}.
    \end{equation*}
    By \eqref{eq: Gelfond's Lemma} the (irreducible) polynomial $Q$ does not divide $P$, they are thus coprime. Lemma~\ref{lem : dim V_n+j(P,Q)} implies that $\dim V_{2n-j}(P,Q)\geq 2n-2j+2$. Choose a linearly independent subset
    \[
        \cU_j:=\{U_1,\dots,U_{2n-2j+2}\} \subset \cB_{2n-j}(P,Q)
    \]
    of cardinality $2n-2j+2$. According to \eqref{eq: V_2n-j(A_j[i]) = tout l'espace}, we have $V_{2n-j}(A_j[i]) = \bR[X]_{\leq 2n-j}$. So there exists
    \[
        \cV_j:=\{V_1,\dots,V_{j-1}\} \subset \cB_{2n-j}(Q_{\sigma_j(i)},\dots, Q_i)
    \]
    such that
    \[
        \Vect[\bR]{\cU_j} \oplus \Vect[\bR]{\cV_j} = \bR[X]_{\leq 2n-j}.
    \]
    Then, identifying $\bR[X]_{\leq 2n-j}$ with $\bR^{2n-j+1}$ via \eqref{eq: identification poly avec points de R^m}, we form the generalized determinant
    \begin{align}
        \label{eq proof: det à considérer}
        1 \leq \big|\det(U_1,\dots,U_{2n-2j+2},V_1,\dots,V_{j-1}) \big|.
    \end{align}
    For $k=1,\dots,2n-2j+2$, we have
    \begin{align*}
        \normH{U_k} \ll \normH{Q} \AND |U_k(\xi)| \ll \normH{Q}^{-\homega}.
    \end{align*}
    On the other hand, for $k=1,\dots,j-1$, we have by Eq. \eqref{eq Prop: minoration de Y_j} from Proposition~\ref{Prop: prop resumee avec tau_j}
    \begin{align*}
        \normH{Q} \ll \normH{V_k} \ll \normH{Q_i} \ll \normH{Q}^{1/\tau_j} \AND |V_k(\xi)| \ll \normH{V_k}^{-\homega} \ll \normH{Q}^{-\homega}.
    \end{align*}
    For $i=2,\dots,2n-j+1$, we add to the first row of the determinant \eqref{eq proof: det à considérer} the $i$-th row multiplied by $\xi^{i-1}$. This first row now becomes
    \[
        \big(U_1(\xi),\dots,U_{2n-2j+2}(\xi),V_1(\xi),\dots,V_{j-1}(\xi)).
    \]
    By the above, the absolute value of each of its elements is $\ll \normH{Q}^{-\homega}$. By expanding the determinant, we obtain
    \begin{align*}
        1 \ll \normH{Q}^{2n-2j+1}\normH{Q_i}^{j-1}\normH{Q}^{-\homega} \ll \normH{Q}^{2n-2j+1+(j-1)/\tau_j-\homega}.
    \end{align*}
    By letting $i$ tend to infinity, we deduce that
    \begin{align*}
        \homega \leq 2n-2j+1+(j-1)\tau_j^{-1} = 2n-d_j.
    \end{align*}
    Since $\homega$ may be chosen arbitrarily close to $\homega_n(\xi)$, we finally get \eqref{eq proof: resultat main a montrer}.
\end{proof}

In view of \eqref{eq proof: resultat main a montrer}, the idea is now to choose $d$ and $j$ so that $d$ is maximal and $d \thickapprox d_j$. The next two results aim at simplifying condition~\eqref{eq: condition sur j} of Theorem~\ref{Thm: main theorem under condition}. The second one also provides a simple lower bound for the exponent $\tau_j$.

\begin{Lem}
    \label{lem: condition thm clef seulement pour k=j}
    Let $n,d, j$ be non-negative integers with $2\leq d \leq \sqrt{n/4}$ and $ 1\leq j\leq n/2$. Suppose that $j$ satisfies
    \begin{align}
    \label{eq lem: condition sur k=j seulement pour tau_j}
        \frac{(2n-d)\tau(d,n)^2}{(n-2j)\tau(d,n)+n-j+1} \leq 1 \AND \tau_j(d,n) \geq 0.
    \end{align}
    Then, the sequence $\big(\tau_k(d,n)\big)_{0\leq k \leq j}$ is (strictly) decreasing. In particular, condition~\eqref{eq: condition sur j} is fulfilled if moreover
    \begin{align*}
        \tau_j(d,n) > \mu(d,n).
    \end{align*}
\end{Lem}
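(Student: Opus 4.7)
The plan is to derive both conclusions directly from the recursive definition of $\big(\tau_k(d,n)\big)_k$ via two very short observations on the coefficients $\alpha_k$. For brevity let me write $\tau=\tau(d,n)$, $\tau_k=\tau_k(d,n)$ and keep the notation $\alpha_k$ from Definition~\ref{Def: tau_j}.

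First, I would check that $\alpha_k$ is strictly increasing in $k$. Its numerator $(2n-d)\tau^2$ is independent of $k$, while its denominator $(n-2k)\tau+n-k+1$ has $k$-derivative $-2\tau-1<0$, so the denominator decreases and $\alpha_k$ increases. Consequently the single hypothesis $\alpha_j\leq 1$ propagates to $\alpha_k\leq 1$ for every $k\in\{1,\dots,j\}$.

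Next, I would invert the recurrence $\tau_k=\alpha_k\bigl(\tau_{k-1}-\tfrac{2k-1}{2n-d}\bigr)$ — legitimate because $\alpha_k>0$ — into
\[
\tau_{k-1}\;=\;\frac{\tau_k}{\alpha_k}\;+\;\frac{2k-1}{2n-d},
\]
and run a backward induction starting from $\tau_j\geq 0$. If at step $k$ we have $\tau_k\geq 0$ together with $\alpha_k\leq 1$, then $\tau_k/\alpha_k\geq\tau_k$, so the inverted recurrence gives $\tau_{k-1}\geq \tau_k+(2k-1)/(2n-d)>\tau_k\geq 0$ (using $k\geq 1$). Iterating for $k=j,j-1,\dots,1$ yields the strict chain $\tau_0>\tau_1>\cdots>\tau_{j-1}>\tau_j\geq 0$, which is the first assertion.

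For the ``in particular'' part, the additional hypothesis $\tau_j>\mu(d,n)$ combined with the strict decrease just established gives $\tau_k>\tau_j>\mu(d,n)$ for every $k\in\{0,\dots,j\}$, which is precisely condition~\eqref{eq: condition sur j}. There is no genuine obstacle here: the only potential subtlety is a sign issue in the inverted recurrence, but this is automatically handled by starting the backward induction from the given nonnegativity of $\tau_j$, so no intermediate positivity needs to be assumed separately.
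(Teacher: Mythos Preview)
Your proof is correct and follows essentially the same route as the paper's: both invert the recurrence to $\tau_{k-1}=\alpha_k^{-1}\tau_k+(2k-1)/(2n-d)$, use $\alpha_k\leq\alpha_j\leq 1$, and deduce the strict decrease. You are simply a bit more explicit than the paper about why $(\alpha_k)_k$ is increasing and about the backward induction needed to ensure $\tau_k\geq 0$ at each step (the paper states $\alpha_1\leq\cdots\leq\alpha_j$ without comment and leaves the nonnegativity implicit).
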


\begin{proof}
    Let $\alpha_1 \leq \cdots \leq \alpha_j$ be as in Definition~\ref{Def: tau_j}. Condition \eqref{eq lem: condition sur k=j seulement pour tau_j} is equivalent to $\alpha_j\leq 1$ and $\tau_j(d,n) \geq 0$. By definition, we have
    \[
        \tau_{k-1}(d,n) = \alpha_k^{-1}\tau_k(d,n) + \frac{2k-1}{2n-d} \qquad \textrm{(for $k=1,\dots,j$).}
    \]
    Since $\alpha_k^{-1}\geq \alpha_j^{-1}\geq 1$, this yields $\tau_{k-1}(d,n) > \tau_k(d,n)$. This proves the first assertion of our lemma. The second one follows easily.
\end{proof}

\begin{Lem}
    \label{lem: minorer tau_j}
    Let $n,d, j$ be non-negative integers with $2\leq d \leq \sqrt{n/4}$ and $ 1\leq j\leq n/2$. Define
    \[
        \alpha = \alpha(d,n) := \frac{(2n-d)\tau(d,n)^2}{(n-2)\tau(d,n)+n},
    \]
    and suppose that
    \begin{align}
        \label{eq lem: condition pour minorer tau_j}
        \alpha^{j} > \frac{j(2j-1)\tau(d,n)}{(n-2)\tau(d,n)+n} = \frac{j(2j-1)\alpha}{(2n-d)\tau(d,n)}.
    \end{align}
    Then, $\alpha\in(0,1)$ and for $k=0,\dots,j$, we have
    \begin{align*}
        \tau_k(d,n) \geq \alpha^j\tau(d,n) - \frac{j(2j-1)\tau(d,n)^2}{(n-2)\tau(d,n)+n} > 0.
    \end{align*}
\end{Lem}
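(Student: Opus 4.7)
The plan is first to verify $\alpha\in(0,1)$, then to establish by induction the sharper intermediate bound $\tau_k(d,n)\geq \alpha^k\tau(d,n)-\alpha k^2/(2n-d)$ for $k=0,\dots,j$, and finally to deduce the stated estimate by monotonicity in~$k$. Write $\tau=\tau(d,n)$ and $c_k=(2k-1)/(2n-d)$, so that the recurrence from Definition~\ref{Def: tau_j} reads $\tau_k(d,n)=\alpha_k\bigl(\tau_{k-1}(d,n)-c_k\bigr)$.

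For $\alpha\in(0,1)$, I would first note that $\tau\in(0,1)$: one has $\omega(d,n)=2n+P(n,d)>2n>2n-d$ (since $P(n,d)>0$ under $d\leq\sqrt{n/4}$, its numerator and denominator both being positive) and $\omega(d,n)-n-d+3>n-d+3>n-2d+3$, so both factors in the ratio defining $\tau$ lie in $(0,1)$. Using $d\geq 2$ and $\tau<1$, one obtains $(2n-d)\tau^2<(2n-d)\tau=(n-2)\tau+(n-d+2)\tau<(n-2)\tau+n$, which gives $\alpha<1$; positivity of $\alpha$ is immediate from $(n-2)\tau+n>0$.

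The core of the proof is the inductive bound. A direct differentiation shows that the denominator $(n-2k)\tau+n-k+1$ of $\alpha_k$ is strictly decreasing in $k$ (derivative $-2\tau-1<0$), hence $\alpha_k\geq \alpha_1=\alpha$ for every $k\geq 1$. Assuming as part of the induction that $\tau_{k-1}(d,n)\geq c_k$, one gets $\tau_k(d,n)\geq \alpha\bigl(\tau_{k-1}(d,n)-c_k\bigr)$. Unfolding the recursion then yields $\tau_k(d,n)\geq \alpha^k\tau-\sum_{\ell=1}^{k}\alpha^{k-\ell+1}c_\ell\geq \alpha^k\tau-\alpha\sum_{\ell=1}^{k}c_\ell=\alpha^k\tau-\alpha k^2/(2n-d)$, where $\alpha<1$ is used to bound $\alpha^{k-\ell+1}\leq\alpha$ and the identity $\sum_{\ell=1}^{k}(2\ell-1)=k^2$ is used for the last step.

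To justify the positivity $\tau_{k-1}(d,n)\geq c_k$ carried along the induction, I would rewrite the hypothesis \eqref{eq lem: condition pour minorer tau_j} (multiplying by $(n-2)\tau+n$ and using the defining identity of $\alpha$) as $(2n-d)\tau\alpha^{j-1}>j(2j-1)$. For $1\leq k\leq j$, since $\alpha<1$ and $k-1\leq j-1$, one then has $(2n-d)\tau\alpha^{k-1}\geq (2n-d)\tau\alpha^{j-1}>j(2j-1)\geq k^2\geq \alpha(k-1)^2+(2k-1)$, and this rearranges to $\alpha^{k-1}\tau-\alpha(k-1)^2/(2n-d)>c_k$, so the inductive bound at step $k-1$ already delivers $\tau_{k-1}(d,n)>c_k$. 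Finally, for each $k\in\{0,\dots,j\}$, the monotonicity $\alpha^k\tau\geq\alpha^j\tau$ (as $\alpha\in(0,1)$) together with $k^2\leq j^2\leq j(2j-1)$ (valid for $j\geq 1$) gives $\tau_k(d,n)\geq \alpha^j\tau-\alpha j(2j-1)/(2n-d)=\alpha^j\tau-j(2j-1)\tau^2/((n-2)\tau+n)$ via $\alpha/(2n-d)=\tau^2/((n-2)\tau+n)$, and positivity of this expression is exactly the hypothesis. The main delicacy is the synchronized induction on the bound and on the positivity $\tau_{k-1}(d,n)\geq c_k$, but both follow from the single reformulated hypothesis $(2n-d)\tau\alpha^{j-1}>j(2j-1)$, so the argument closes cleanly.
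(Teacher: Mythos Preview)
Your proof is correct and follows essentially the same approach as the paper: the paper introduces an auxiliary sequence $\sigma_k$ defined by $\sigma_0=\tau$ and $\sigma_k=\alpha(\sigma_{k-1}-c_k)$, shows $\sigma_j>0$ via the same telescoping computation you carry out, and then argues $\tau_k\geq\sigma_k\geq\sigma_j$ by the same comparison $\alpha_k\geq\alpha$. Your presentation is slightly more explicit about the synchronized induction (justifying $\tau_{k-1}\geq c_k$ at each step) and yields the marginally sharper intermediate bound with $k^2$ in place of $j(2j-1)$, but the underlying argument is the same.
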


\begin{proof}
    We have $\alpha\in(0,1)$ since $\tau(d,n)< 1$ and $d\geq 2$. For simplicity, we write $\tau = \tau(d,n)$. Let $(\sigma_k)_{k\geq 0}$ be the sequence defined by $\sigma_0= \tau$, and
    \begin{align*}
        \sigma_k = \alpha\bigg(\sigma_{k-1} - \frac{2k-1}{2n-d} \bigg) \qquad \textrm{for } k\geq 1.
    \end{align*}
    Using \eqref{eq lem: condition pour minorer tau_j}, we find
    \begin{align}
        \label{eq proof: minoration sigma_j}
        \frac{\sigma_j}{\alpha^j} = \frac{\sigma_{j-1}}{\alpha^{j-1}} - \frac{2j-1}{(2n-d)\alpha^{j-1}} & = \sigma_0 -\frac{1}{2n-d}\sum_{k=1}^{j} \frac{2k-1}{\alpha^{k-1}} \geq \tau -\frac{j(2j-1)}{(2n-d)\alpha^{j-1}} > 0.
    \end{align}
    In particular $\sigma_j\geq 0$. Since $\sigma_{k-1} \geq \alpha^{-1}\sigma_k$, by induction, we get $\sigma_j < \sigma_{j-1} < \cdots < \sigma_0$. Moreover, $\alpha = \alpha_1 \leq \alpha_k$, for each $k\in\bN$ with $1\leq k \leq n/2$, where $\alpha_k$ is as in Definition~\ref{Def: tau_j}. Another quick induction yields $\sigma_k\leq \tau_k$ for $k=0,\dots,j$. We conclude by combining $\sigma_j\leq \sigma_k\leq \tau_k$ with \eqref{eq proof: minoration sigma_j}.
\end{proof}

\begin{proof}[Proof of Theorem~\ref{Thm : main avec meilleure cst}]
    Define a function $f:[0,\infty)\rightarrow \bR$ by $f(x) = x(2-e^x)$. Let $\ee = 0.3748\cdots$ be the unique solution of the equation $(1+x)e^x = 2$. It is the abscissa of the maximum of $f$. Set
    \begin{align*}
        a = \sqrt[3]{\frac{2\ee(2-e^\ee)}{9}} = 0.3567\cdots
    \end{align*}
    Let $n\geq 1$ and define $d=d(n)$ and $j(n)$ by
    \begin{align*}
        d(n) = \lceil a n^{1/3} \rceil \AND j:=j(n):= \bigg\lceil \frac{2\ee n}{9d^2} \bigg\rceil.
    \end{align*}
    We suppose $n \geq 30$ so that $2 \leq d \leq 1+n/2$ and $1\leq j\leq n/2$. Since $d^4/n^2 \asymp d/n = \GrO(n^{-2/3})$, we find $\omega(d,n) = 2n+2d^2 + \GrO(d)$, and then
    \begin{align*}
        \tau(d,n) = 1-\frac{3d^2}{n} +\GrO\bigg(\frac{1}{n^{2/3}} \bigg) \AND \alpha(d,n) =  1-\frac{9d^2}{2n} +\GrO\bigg(\frac{1}{n^{2/3}} \bigg),
    \end{align*}
    (where $\alpha(d,n)$ is defined in Lemma~\ref{lem: minorer tau_j}). In particular, by choice of $j$, we have
    \begin{align}
    \label{eq proof: minoration alpha^j}
        \alpha(d,n)^j = \exp\big(j\log(\alpha(d,n))\big) = \exp\bigg(-\frac{9jd^2}{2n} +\GrO\bigg(\frac{1}{n^{1/3}} \bigg) \bigg) = e^{-\ee} + \GrO\bigg(\frac{1}{n^{1/3}} \bigg).
    \end{align}
    Since
    \[
        \frac{j(2j-1)\tau(d,n)}{(n-2)\tau(d,n)+n} =  \GrO\bigg(\frac{1}{n^{1/3}} \bigg),
    \]
    there exists $N_1\geq 30$ such that condition \eqref{eq lem: condition pour minorer tau_j} of Lemma~\ref{lem: minorer tau_j} is fulfilled for each $n\geq N_1$. Thus, for $k=0,\dots,j$, we have
    \begin{align*}
        \tau_k(d,n) \geq \alpha(d,n)^j\tau(d,n) - \frac{j(2j-1)\tau(d,n)^2}{2n-2} = e^{-\ee} + \GrO\bigg(\frac{1}{n^{1/3}} \bigg).
    \end{align*}
    In particular $d_j = 2j-1-(j-1)/\tau_j(d,n)$ satisfies
    \begin{align*}
        d_j \geq j\big(2- e^{\ee}\big) + \GrO(1) & = \frac{2\ee(2- e^{\ee}) n}{9d^2}  + \GrO(1) = \frac{a^3 n}{d^2} + \GrO(1) = d + \GrO(1).
    \end{align*}
    On the other hand, we have
    \begin{align*}
        \mu(d,n) = \frac{n}{(2n-d)\tau} = \frac{1}{2}+\GrO\bigg(\frac{1}{n^{1/3}} \bigg).
    \end{align*}
    Since $e^{-\ee} > 1/2$, by \eqref{eq proof: minoration alpha^j} there exists $N_2\geq N_1$ such that condition \eqref{eq: condition sur j} of Theorem~\ref{Thm: main theorem under condition} is fulfilled for each $n\geq N_2$. We conclude that for any $n\geq N_2$ and any transcendental real number $\xi$, we have
    \begin{align*}
        \homega_n(\xi) \leq 2n-\min\{d,d_i\} = 2n - d + \GrO(1).
    \end{align*}
\end{proof}

\appendix

\section{Appendix: Twisted heights}
\label{subsection: twisted heights}

The purpose of this appendix  is to give another interpretation of the quantity $\HHstar(V)$ defined in Section~\ref{def petitesse HHstar(V)}. Our first approach was actually to work with the heights $\HH_T$ defined below. We are thankful to Damien Roy for pointed out the link with Hodge's duality.

\medskip

Fix $A\in \GL(\bR^{m+1})$ and let $V$ be a $k$--dimensional subspace of $\bR^{m+1}$ defined over $\bQ$. Its (twisted) height $\HH_A(V)$ is defined as
the covolume of the lattice $A(V\cap\bZ^{m+1})$ inside the subspace $AV$ (with the convention that $\HH_A(V) = 1$ if $V=\{0\}$). Explicitly, we have
\begin{equation}
    \label{eq: def hauteur tordue}
    \HH_A(V) := \norm{A\bx_1\wedge \cdots \wedge A\bx_k },
\end{equation}
where $(\bx_1,\dots,\bx_k)$ is any $\bZ$--basis of the lattice $V\cap \bZ^{m+1}$. Then Schmidt's inequality generalizes as follows
\begin{equation}
    \label{eq : inegalite de Schmidt}
    \HH_A(U+V) \HH_A(U\cap V) \leq \HH_A(U)  \HH_A(V)
\end{equation}
for any subspaces $U,V$ of $\bR^{m+1}$ defined over $\bQ$. The proof is the same as for rational subspaces (see \cite[26, Chapter I, Lemma 8A]{Schmidt} and \cite[§5]{marnat2018optimal}). Similarly to Marnat and Moshchevitin \cite[§5]{marnat2018optimal}, we consider twisted heights of the following form. Let $T>1$ be a parameter. We define the matrix $A_{m,T}\in\GL(\bR^{m+1})$ as
\[
    A_{m,T} = \left(\begin{array}{cccc}
                    T^{m}       &    0    &  \dots &  0  \\
                       0        & T^{-1}  &        &     \\
                       \vdots   &         & \ddots &     \\
                        0       &         &        & T^{-1}
              \end{array}\right)
              \left(\begin{array}{ccccc}
                    1          &  \xi      &  \cdots   &          & \xi^{m}   \\
                    0          &   1       &   0       &  \cdots  &   0       \\
                    \vdots     &           &  \ddots   &          &   \vdots  \\
                               &           &           &          &           \\
                    0          &  \cdots   &           &  0       &    1
              \end{array}\right),
\]
so that for each polynomial $P= a_0+\cdots + a_{m}X^{m} \in \bZ[X]_{\leq m}$ (identified to a point of $\bR^{m+1}$ via \eqref{eq: identification poly avec points de R^m}), we have
\begin{equation}
    \label{eq: expression de Ax pour la hauteur T}
    A_{m,T}\left(\begin{array}{c}
        a_0 \\
        \vdots \\
        a_{m}
    \end{array} \right) =
    \left(\begin{array}{c}
        T^{m}P(\xi) \\
        T^{-1}a_1 \\
        \vdots \\
        T^{-1} a_m
    \end{array} \right).
\end{equation}
We denote by $\HH_{m,T}$ (or simply $\HH_T$ if there is no ambiguity about the integer $m$) the twisted height $\HH_{A}$ associated to the matrix $A=A_{m,T}$. Note that
\[
    \HH_{T}\big(\bR[X]_{\leq m}\big) = \HH_{T}(\bR^{m+1}) = \det(A) = 1.
\]

\begin{Def}
    Let $V$ be a subspace of $\bR[X]_{\leq m}$ defined over $\bQ$. We set
    \begin{align*}
        \HHH(V) = \lim_{T\rightarrow+\infty} T^{-\codim(V)}\HH_{m,T}(V),
    \end{align*}
    where $\codim(V) = m+1-\dim(V)$ denotes the codimension of the space $V$ inside $\bR[X]_{\leq m}$. In particular, $\HHH(\bR[X]_{\leq n}) = 1$, and for any primitive polynomial $P\in \bZ[X]_{\leq m}$, we have
    \[
        \HHH\big(\Vect[\bR]{P}\big) = |P(\xi)| = \HHstar\big(\Vect[\bR]{P}\big).
    \]
\end{Def}

Our goal is now to prove that for any non-zero subspace $V\subset\bR[X]_{\leq m} \simeq \bR^{m+1}$ defined over $\bQ$, we have
\[
    \HHH(V) \asymp \HHstar(V),
\]
where $\HHstar$ is as in Definition~\ref{Def: def HHstar} (and the implicit constant depends on $m$ and $\xi$ only). First, note that since $\dim(U+V) + \dim(U\cap V) = \dim U + \dim V$ for any subspaces $U,V$ of $\bR[X]_{\leq m}$, we deduce from \eqref{eq : inegalite de Schmidt} (with $A=A_{m,T}$) the following version of Schmidt's inequality, which is the analog of Proposition~\ref{Prop: Schmidt inegalite pour HHstar}
\begin{equation}
    \label{eq : inegalite de Schmidt new}
    \HHH(U+V) \HHH(U\cap V) \leq \HHH(U)  \HHH(V),
\end{equation}
valid for any $U,V$ of $\bR[X]_{\leq m}$ defined over $\bQ$.

\begin{Prop}
    \label{Prop: annexe, formule HHH(V)}
    Let $V$ be a $k$--dimensional subspace of $\bR^{m+1}$ defined over $\bQ$, with $1 \leq k\leq m+1$, and set $\Xi_m=(1,\xi,\dots,\xi^m)$. We have
    \begin{equation}
        \label{Prop annexe: equivalence estimate avec hauteur tordue}
        \HHstar(V) \ll \HHH(V) \leq \HHstar(V),
    \end{equation}
    where the implicit constant depends on $\xi$ and $m$ only. Moreover, for any $\bZ$--basis $(\bx_1,\dots,\bx_k)$ of $V\cap\bZ^{m+1}$, we have
    \begin{align}
        \label{eq prop: formule explicite HHH(V)}
        \HHH(V) = \norm{\sum_{i=1}^{k} (-1)^{k-i}\psc{\Xi_m}{\bx_i}\,\bx_1^+\wedge \cdots \wedge\widehat{\bx_i^+}\wedge \cdots \wedge\bx_k^+},
    \end{align}
    where $\bx_i^+\in\bZ^{m}$ denotes the point $\bx_i$ deprived of its first coordinate.
\end{Prop}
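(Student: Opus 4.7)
The plan is to first establish the explicit formula \eqref{eq prop: formule explicite HHH(V)} by direct computation of the limit defining $\HHH(V)$, and then deduce the equivalence $\HHstar(V) \asymp \HHH(V)$ by comparing this formula with \eqref{eq: HHstar(V) formule explicite}.

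For the explicit formula, I would fix a $\bZ$-basis $(\bx_1, \ldots, \bx_k)$ of $V \cap \bZ^{m+1}$ and examine the Plücker coordinates of $A_{m,T}\bx_1 \wedge \cdots \wedge A_{m,T}\bx_k$. From \eqref{eq: expression de Ax pour la hauteur T}, the coordinate indexed by a $k$-subset $I \subset \{0, \ldots, m\}$ factors as $T^{m-k+1}$ times a determinant when $0 \in I$ (one column scales by $T^m$, the other $k-1$ by $T^{-1}$), and as $T^{-k}$ times a minor of $(a_{i,j})_{i,j\ge 1}$ when $0 \notin I$. Dividing by $T^{\codim V} = T^{m+1-k}$ and letting $T \to \infty$ kills the latter terms. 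For $I = \{0\} \cup J$, the surviving determinant has first column $(\psc{\bx_i}{\Xi_m})_i$ and remaining columns indexed by $J$ in $(a_{i,j})$; expanding it along the first column and identifying each $(k-1) \times (k-1)$ cofactor with a Plücker coordinate of $\bx_1^+ \wedge \cdots \widehat{\bx_i^+} \cdots \wedge \bx_k^+$ yields \eqref{eq prop: formule explicite HHH(V)} up to a global sign (which does not affect the norm).

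For the equivalence, set $\bY_i = \bx_1 \wedge \cdots \widehat{\bx_i} \cdots \wedge \bx_k$, $\bY_i^+ = \bx_1^+ \wedge \cdots \widehat{\bx_i^+} \cdots \wedge \bx_k^+$, and
$$\bZZ = \sum_{i=1}^k (-1)^{k-i}\psc{\bx_i}{\Xi_m}\,\bY_i, \qquad \mathbf{W} = \sum_{i=1}^k (-1)^{k-i}\psc{\bx_i}{\Xi_m}\,\bY_i^+,$$
so that $\HHstar(V) = \norm{\bZZ}$ by \eqref{eq: HHstar(V) formule explicite} and $\HHH(V) = \norm{\mathbf{W}}$ by the first part. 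Decomposing $\bx_j = a_{j,0}\be_0 + \bx_j^+$ inside each $\bY_i$ gives $\bY_i = \bY_i^+ + \be_0 \wedge \mathbf{R}_i$ for an explicit $\mathbf{R}_i \in \bigwedge^{k-2}(\bR^m)$, hence $\bZZ = \mathbf{W} + \be_0 \wedge \mathbf{W}'$ with $\mathbf{W}' = \sum_i (-1)^{k-i}\psc{\bx_i}{\Xi_m}\mathbf{R}_i$. Since $\mathbf{W} \in \bigwedge^{k-1}(\bR^m)$ is orthogonal to $\be_0 \wedge \mathbf{W}'$, we obtain $\norm{\bZZ}^2 = \norm{\mathbf{W}}^2 + \norm{\mathbf{W}'}^2$, so $\HHH(V) \leq \HHstar(V)$ is immediate. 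For the reverse bound the key claim is $\mathbf{W}' = \pm\, \Xi_m^+ \lrcorner \mathbf{W}$ with $\Xi_m^+ = (\xi, \xi^2, \ldots, \xi^m) \in \bR^m$: substituting $a_{\ell, 0} = \psc{\bx_\ell}{\Xi_m} - \psc{\bx_\ell^+}{\Xi_m^+}$ into the expansion of $\mathbf{R}_i$ splits $\mathbf{W}'$ into a quadratic sum in the $\psc{\bx_i}{\Xi_m}$'s that vanishes by antisymmetry under $(i, \ell) \leftrightarrow (\ell, i)$, plus a remainder matching $\pm\, \Xi_m^+ \lrcorner \mathbf{W}$ via the contraction formula \eqref{eq: contraction formule explicite} applied to each $\bY_i^+$. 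Combined with the standard inequality $\norm{\by \lrcorner \bX} \leq \norm{\by}\,\norm{\bX}$ (coming from $\by \lrcorner \bX = *(\by \wedge *\bX)$ and the isometry of $*$), this yields $\norm{\mathbf{W}'} \leq \norm{\Xi_m^+}\,\HHH(V)$, hence $\HHstar(V)^2 \leq (1 + \norm{\Xi_m^+}^2)\HHH(V)^2$ with constant depending only on $\xi$ and $m$.

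The main obstacle is the pairwise sign cancellation in the quadratic part. One computes $\mathbf{R}_i = \sum_{\ell \neq i}(-1)^{\mathrm{pos}_i(\ell)-1}\, a_{\ell, 0}\,\bY_{i,\ell}^+$ (where $\bY_{i,\ell}^+$ is the wedge over $\{1, \ldots, k\} \setminus \{i, \ell\}$ and $\mathrm{pos}_i(\ell) = \ell$ if $\ell < i$, $\ell - 1$ if $\ell > i$), and the cancellation reduces to the identity $(-1)^{k-i}(-1)^{\mathrm{pos}_i(\ell)-1} + (-1)^{k-\ell}(-1)^{\mathrm{pos}_\ell(i)-1} = 0$ for each $i \neq \ell$, which is verified by case analysis on the relative order of $\ell$ and $i$.
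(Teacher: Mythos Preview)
Your proof is correct and takes a genuinely different route from the paper's for the equivalence $\HHstar(V)\asymp\HHH(V)$. For the explicit formula \eqref{eq prop: formule explicite HHH(V)} both arguments are essentially the same: compute the Pl\"ucker coordinates of $A_{m,T}\bx_1\wedge\cdots\wedge A_{m,T}\bx_k$, observe that those not involving the index $0$ scale like $T^{-k}$ and die in the limit, and expand the surviving minors along their first row.

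For the inequality the paper proceeds coordinatewise. It introduces the auxiliary points $\bz_i=(\psc{\Xi_m}{\bx_i},\bx_i)\in\bR^{m+2}$ and shows that the nonzero coordinates of $\bY$ (giving $\HHH(V)$) are exactly certain coordinates of $\bz_1\wedge\cdots\wedge\bz_k$, which in turn form a subset of the coordinates of $\bZZ$ (giving $\HHstar(V)$); this yields $\norm{\bY}\le\norm{\bZZ}$ directly. For the reverse bound the paper notes that in the matrix $M(\bz_1,\dots,\bz_k)$ the second row (the $a_{i,0}$'s) is a linear combination, with coefficients $1,\xi,\dots,\xi^m$, of the remaining rows; a row expansion then expresses every ``missing'' coordinate of $\bZZ$ as a bounded linear combination of coordinates of $\bY$. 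Your argument is more structural: you obtain the exact orthogonal decomposition $\bZZ=\mathbf{W}+\be_0\wedge\mathbf{W}'$ and prove the identity $\mathbf{W}'=\pm\,\Xi_m^+\lrcorner\mathbf{W}$, whence the sharp relation
\[
\HHstar(V)^2=\HHH(V)^2+\norm{\Xi_m^+\lrcorner\mathbf{W}}^2\le (1+\norm{\Xi_m^+}^2)\,\HHH(V)^2.
\]
This buys an explicit constant and a cleaner picture of how the two heights differ, at the price of the sign bookkeeping you flag at the end (which is indeed correct: for $\ell<i$ one has $\mathrm{pos}_i(\ell)=\ell$, $\mathrm{pos}_\ell(i)=i-1$, and the two exponents differ by the odd integer $2(\ell-i)+1$). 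The paper's approach avoids this bookkeeping but gives only the qualitative $\ll$.
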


Before to prove this result, we introduce some notation that we will need in the proof. Given two positive integers $p$ and $q$, we define $\cI(p,q)$ as the set of $p$--tuples $(i_1,\dots,i_p)$ of integers with $1\leq i_1 < \cdots < i_p \leq q$. Let $\be = (\be_1,\dots,\be_q)$ be the canonical basis of $\bR^q$. For any $I\in\cI(p,q)$ as above, set $\be_{I} = \be_{i_1}\wedge \cdots \wedge \be_{i_p} \in \bigwedge^p\bR^q$. For any $\bX\in\bigwedge^p\bR^q$, we call \textsl{$I$--coordinate of $\bX$} its $\be_I$--coordinate in the basis $(\be_J)_{J\in\cI(p,q)}$. For any $\bx_1,\dots,\bx_p\in\bR^q$, we denote by $M(\bx_1,\dots,\bx_p)$ the $q\times p$ matrix whose columns are $\bx_1,\dots, \bx_p$ written in the basis $\be$, and by $\cD_{I}(\bx_1,\dots,\bx_p)$ the minor formed by the rows of $M(\bx_1,\dots,\bx_p)$ of index $i$ in $I$. Then, writing $\bX = \bx_1\wedge \cdots \wedge\bx_k$, we have the classical formula
\begin{align}
    \label{eq: formule norm multi-vecteur}
    \bX = \sum_{I\in\cI(p,q)}\cD_{I}(\bx_1,\dots,\bx_p) \be_{I} \AND \norm{\bX}^2 = \sum_{I\in\cI(p,q)}\cD_{I}(\bx_1,\dots,\bx_p)^2.
\end{align}
Therefore, for each $I\in\cI(p,q)$, the $I$--coordinate of $\bX$ is $\cD_{I}(\bx_1,\dots,\bx_p)$.

\begin{proof}[Proof of Proposition~\ref{Prop: annexe, formule HHH(V)}]
    Fix $T\geq 1$ and for $i=1,\dots,k$ set
    \begin{align*}
        \bZZ &= \sum_{i=1}^kp_i\bx_1\wedge \cdots \wedge\widehat{\bx_i}\wedge \cdots \wedge\bx_k, \quad  \textrm{where } p_i  = (-1)^{i+1}\psc{\Xi_m}{\bx_i},\\
        \bY & = \lim_{T\rightarrow+\infty}  T^{-m+k-1}\by_1(T)\wedge\cdots\wedge \by_k(T), \quad \textrm{where } \by_i = \by_i(T) = A_{m,T}(\bx_i) \in \bR^{m+1}.
    \end{align*}
    By \eqref{eq: HHstar(V) formule explicite} we have
    \begin{align*}
        \HHstar(V) = \norm{\bZZ} \AND \HHH(V) = \norm{\bY}.
    \end{align*}
    We prove the following properties. For $i=1,\dots,k$ we set $\bz_i = (\psc{\Xi_m}{\bx_i}, \bx_i)\in \bR^{m+2}$.
    \begin{enumerate}[label=(\roman*)]
      \item \label{enum preuve annexe: item 1} For each $J=(1,j_2,\dots,j_k)\in\cI(k,m+2)$, the $J$--coordinate of $\bz_1\wedge\cdots \wedge\bz_k$ is equal to the $K$--coordinate of $\bZZ$, where $K=(j_2-1,\dots,j_k-1)\in\cI(k-1,m+1)$.
    \end{enumerate}
    Fix $I = (i_1,\dots,i_k)\in\cI(k,m+1)$.
    \begin{enumerate}[label=(\roman*)]
        \setcounter{enumi}{1}
      \item \label{enum preuve annexe: item 2} If $i_1 \geq 2$, then the $I$--coordinate of $\bY$ is equal to $0$.
      \item \label{enum preuve annexe: item 3} If $i_1=1$, then the $I$--coordinate of $\bY$ is equal to the $J$--coordinate of $\bz_1\wedge\cdots\wedge\bz_k$, where $J = (1,i_2+1,\dots,i_k+1)$. It is also equal to the $K$--coordinate of $\bZZ$, where $K=(i_2,\dots,i_k)$.
    \end{enumerate}
    To prove the first assertion, it suffices to expand the determinant $\cD_{J}(\bz_1,\dots,\bz_k)$ along its first row. Let $I = (i_1,\dots,i_k)\in\cI(k,m+1)$. Suppose first that $i_1\neq 1$. Then, by Hadamard's inequality, the $I$--coordinate of $\by_1(T)\wedge\cdots \wedge\by_k(T)$ satisfies
    \[
        |D_{I}(\by_1,\dots,\by_k)| \ll \prod_{j=1}^{k} T^{-1}\normH{\bx_j} = \GrO(T^{-k}),
    \]
    and we deduce that the $I$--coordinate of $\bY$ is equal to $0$, which proves assertion~\ref{enum preuve annexe: item 2}. Suppose now that $i_1=1$ and set $J = (1,i_2+1,\dots,i_k+1)$. Then
    \begin{align*}
        D_{I}(\by_1,\dots,\by_k)  = T^{m+1-k}\cD_J(\bz_1,\dots,\bz_k),
    \end{align*}
    hence the first part of~\ref{enum preuve annexe: item 3}. The second part is obtained by combining the above with assertion~\ref{enum preuve annexe: item 1}.

    \medskip

    We deduce from the last two assertions that all the non-zero coordinates of $\bY$ are coordinates of $\bZZ$, thus $\norm{\bY} \leq \norm{\bZZ}$, which proves the second inequality in \eqref{Prop annexe: equivalence estimate avec hauteur tordue}. For the first estimate, we need to estimate the $K$--coordinates of $\bZZ$ with $K\in\cI(k-1,m+1)$ of the form $(1,i_2,\dots,i_{k-1})$. According to assertion~\ref{enum preuve annexe: item 1}, they are exactly the determinants $\cD_J(\bz_1,\dots,\bz_k)$ with $J = (1,2,j_3,\dots,j_k)$ in $\cI(k,m+2)$.

    \medskip

    Fix a $J\in \cI(k,m+2)$ as above. The second row of the matrix $M(\bz_1,\dots,\bz_k)$ is a linear combination of the remaining rows (with coefficients in absolute value between $1$ and $|\xi|^m$). We deduce that $\cD_J(\bz_1,\dots,\bz_k)$ can be written as a linear combination of $\cD_{J'}(\bz_1,\dots,\bz_k)$, where $J'$ belong to the subset of $\cI(k,m+2)$ consisting in the $k$--tuples whose second element is $\geq 3$. By assertion~\ref{enum preuve annexe: item 3}, they are all coordinates of $\bY$, hence $|\cD_J(\bz_1,\dots,\bz_k)|\ll \norm{\bY}$. We conclude that $\norm{\bZZ}\ll \norm{\bY}$.

    \medskip

    Finally, fix $(i_2,\dots,i_k)\in\cI(k-1,m)$ and set $K = (i_2+1,\dots,i_k+1)$. By definition of $\bZZ$, the $K$--coordinate of $\bZZ$ is equal to
    \begin{align*}
        \sum_{i=1}^{k}p_i \cD_I(\bx_1,\cdots,\widehat{\bx_i}, \cdots,\bx_k) = \sum_{i=1}^{k}p_i \cD_J(\bx_1^+,\cdots,\widehat{\bx_i^+}, \cdots,\bx_k^+).
    \end{align*}
    By assertion~\ref{enum preuve annexe: item 3}, this is also the $(1,K)$--coordinate of $\bY$. So, the set of non-zero coordinates of $\bY$ is exactly equal to the set of non-zero coordinates of the point
    \[
        \sum_{i=1}^{k}p_i\,\bx_1^+\wedge \cdots \wedge\widehat{\bx_i^+}\wedge \cdots \wedge\bx_k^+.
    \]
    Eq. \eqref{eq prop: formule explicite HHH(V)} follows from the second identity of \eqref{eq: formule norm multi-vecteur}.
\end{proof}

\Ack

\bibliographystyle{abbrv}


\footnotesize {

}

\Addresses

\end{document}